  \theoremstyle{plain}
    \newtheorem{thm}{Theorem}[section]
    \newtheorem{prop}[thm]{Proposition}
   \newtheorem{lemma}[thm]{Lemma}
    \newtheorem{subsec}[thm]{}
\theoremstyle{definition}
    \newtheorem{defn}[thm]{Definition}
        \newtheorem{remark}[thm]{Remark}
    \newtheorem{exam}[thm]{Example}
\theoremstyle{remark}
\title{}
\author{}
\date{}
\begin{document}
\title{Hom-associative algebras up to homotopy}
\author{Apurba Das}
\email{apurbadas348@gmail.com}
\address{Stat-Math Unit,
Indian Statistical Institute, Kolkata 700108, West Bengal, India.}
\subjclass[2010]{55U15, 16E45, 17A99, 16E40, 16D20}
\keywords{Hom-algebras, Homotopy algebras, $HA_\infty$-algebras, $HL_\infty$-algebras, $2$-vector space, Hochschild cohomology, formal deformations}
\thispagestyle{empty}

\begin{abstract}
A hom-associative algebra is an algebra whose associativity is twisted by an algebra homomorphism. In this paper, we introduce a strongly homotopy version of hom-associative algebras ($HA_\infty$-algebras in short) on a graded vector space. We describe $2$-term $HA_\infty$-algebras in details. In particular, we study `skeletal' and `strict' $2$-term $HA_\infty$-algebras. We also introduce hom-associative $2$-algebras as categorification of hom-associative algebras. The category of $2$-term $HA_\infty$-algebras and the category of hom-associative $2$-algebras are shown to be equivalent. An appropriate skew-symmetrization of $HA_\infty$-algebras give rise to $HL_\infty$-algebras introduced by Sheng and Chen. Finally, we define a suitable Hochschild cohomology theory for $HA_\infty$-algebras which control the deformation of the structures.
\end{abstract}

\maketitle
\tableofcontents

\vspace{0.2cm}
\section{Introduction}
Homotopy algebras was first appeared in the work of Stasheff \cite{sta} in the recognition of loop spaces. More precisely, a connected topological space $X$ is a $1$-fold loop space if and only if it admits a structure of an $A_\infty$-space. The algebraic analouge of an $A_\infty$-space is called an $A_\infty$-algebra (strongly homotopy associative algebra). An $A_\infty$-algebra consists of a graded vector space $A$ together with a collection of multilinear maps $\mu_k : A^{\otimes k } \rightarrow A$ of degree $k-2$, for $k \geq 1$, satisfying certain conditions. A first example of an $A_\infty$-algebra is a differential graded associative algebra in which $\mu_k = 0$, for $k \geq 3$. 
An equivalent description of an $A_\infty$-algebra structure on $A$ is given by a square-zero coderivation on the tensor coalgebra of the suspension $sA$.
The Lie analouge of an $A_\infty$-algebra is 
called an $L_\infty$-algebra (strongly homotopy Lie algebra) which was first appeared in a supporting role in deformation theory \cite{lada-stasheff, lada-markl}. $L_\infty$-algebras also play a fundamental role in the Kontsevich's proof of the deformation quantization of arbitrary Poisson manifolds.

In this paper, we deal with certain type of algebras, called hom-type algebras. In these algebras, the identities defining the structures are twisted by homomorphisms. Recently, hom-type algebras have been studied by many authors. The notion of hom-Lie algebras was first introduced by Hartwig, Larsson and Silvestrov \cite{hls}. Hom-Lie algebras appeared in examples of q-deformations of the Witt and Virasoro algebras. Other type of algebras (e.g. associative, Leibniz, Poisson, Hopf,...) twisted by homomorphisms have also been studied. See \cite{makh-sil, makh-sil3} (and references there in) for more details. Our main objective of this paper is the notion of hom-associative algebra introduced by Makhlouf and Silvestrov \cite{makh-sil} and was further studied in \cite{makh-sil3, amm-ej-makh, makh-sil2, yau}. A hom-associative algebra is an algebra whose associativity is twisted by an algebra homomorphism. More precisely, a hom-associative algebra is an algebra $(A, \mu)$ and an algebra homomorphism $\alpha : A \rightarrow A$ satisfying
\begin{center}
$ \mu (   \alpha (a), \mu(b,c) ) = \mu( \mu(a,b), \alpha(c)  ),$
\end{center}
for all $a,b,c \in A$. When $\alpha = ~$identity, one recover the classical notion of associative algebras as a subclass. To extend the formal deformation theory from associative algebras to hom-associative algebras, the authors in \cite{amm-ej-makh, makh-sil2} introduce a Hochschild type cohomology theory (suitably twisted by $\alpha$) for hom-associative algebras. Recently, the present author shows that like the classical associative case \cite{gers0}, the Hochschild cohomology of a hom-associative algebra carries a Gerstenhaber algebra structure \cite{das}. This Gerstenhaber structure on cohomology is in fact induced from a homotopy $G$-algebra structure on the Hochschild cochain groups \cite{das2}.

The notion of strongly homotopy hom-Lie algebras or $HL_\infty$-algebras was introduced by Sheng and Chen as a hom-analouge of $L_\infty$-algebras \cite{sheng-chen}. The aim of the present paper is to introduce the hom-analouge of $A_\infty$-algebras or strongly homotopy hom-associative algebras ($HA_\infty$-algebras in short). More precisely, an $HA_\infty$-algebra structure on a graded vector space $A$ consists of
multilinear maps $\mu_k : A^{\otimes k} \rightarrow A$ of degree $k-2$, for $k \geq 1$, a linear map $\alpha : A \rightarrow A$ of degree $0$ which commute with all $\mu_k$'s and satisfying certain conditions (cf. Definition \ref{ha}). We define $HA_\infty[1]$-algebras as equivalent notion of $HA_\infty$-algebras by a degree shift (cf. Definition \ref{ha1}). We also give an equivalent description of $HA_\infty$-algebras in terms of suitable coderivations. Given a graded vector space $V$ and a linear map $\alpha : V \rightarrow V$ of degree $0$, we consider
\begin{center}
$\text{Coder}^p_\alpha (TV) = \{ D = \sum_{n \geq 1} \widetilde{\varrho_n} |~ \varrho_n : V^{\otimes n} \rightarrow V \text{ is a map of degree } p \text{ satisfying } \alpha \circ \varrho_n = \varrho_n \circ \alpha^{\otimes n} \},$
\end{center}
where each $\widetilde{\varrho_n} : TV \rightarrow TV$ is obtained from $\varrho_n$ by a suitable coderivation rule (cf. Lemma \ref{lifting-lemma}). The graded space $\text{Coder}^\bullet_\alpha (TV)$ carries a graded Lie algebra structure under the graded commutator of coderivations. We show that an $HA_\infty$-algebra structure on a graded vector space $A$ with respect to a degree $0$ linear map $\alpha$ is equivalent to an element $D \in \text{Coder}^{-1}_\alpha (TV)$ whose square is zero, where $V= s A$ (cf. Theorem \ref{coder-ha-inf}). Like classical cases of $A_\infty$-algebras, we also prove homotopy transfer theorems for $HA_\infty$-algebras (Theorems \ref{htt-1}, \ref{htt-2}). We remark that similar notions of strongly homotopy hom-associative algebras have been defined in \cite{alo-chat, yau3}. Although, our definition is much fruitful and perfectly fits with the cohomology theory of hom-associative algebras (see the contents of Section \ref{sec4}).

Note that, explicit description of $2$-term $L_\infty$-algebras have been studied in \cite{baez-crans} and they are related to categorification of Lie algebras. In this paper, we describe $2$-term $HA_\infty$-algebras in more details. More precisely, a $2$-term $HA_\infty$-algebra is given by a complex $A:= (A_1 \xrightarrow{d} A_0)$ together with maps $\mu_2 : A_i \otimes A_j \rightarrow A_{i+j}$ and $\mu_3 : A_0 \otimes A_0 \otimes A_0 \rightarrow A_1$, a chain map $\alpha : A \rightarrow A$ satisfying a set of axioms (cf. Definition \ref{defn-2ha-inf}). 
We denote the category of $2$-term $HA_\infty$-algebras by ${\bf 2HA_\infty}$. Particular cases are given by `skeletal' and `strict' $2$-term $HA_\infty$-algebras. Skeletal $2$-term $HA_\infty$-algebras are given by the condition $d=0$ and strict $2$-term $HA_\infty$-algebras are given by the condition $\mu_3 = 0$. We show that skeletal algebras are classified by third Hochschild cohomology of hom-associative algebras (cf. Theorem \ref{skeletal-2}). More generally, we prove that an $n$-term $HA_\infty$-algebras whose underlying graded vector space is concentrated in degrees $0, n-1$ and the zero differential are classified by $(n+1)$-th Hochschild cohomology of hom-associative algebras (cf. Theorem \ref{skeletal-n}). We introduce crossed module of hom-associative algebras and prove that strict algebras correspond to crossed modules of hom-associative algebras (cf. Theorem \ref{strict-crossed-mod}).

A $2$-vector space is a category $C$ with a vector space of objects $C_0$ and a vector space of morphisms $C_1$, such that all the structure maps are linear. It is known that the category of $2$-term complexes and the category of $2$-vector spaces are equivalent \cite{baez-crans}. 
We introduce hom-associative $2$-algebras as categorification of hom-associative algebras (cf. Definition \ref{hom-ass-2-alg-defn}). We denote the category of hom-associative $2$-algebras by ${\bf HAss2}$. Following the proof of \cite{baez-crans}, we prove that the categories ${\bf 2HA_\infty}$ and ${\bf HAss2}$ are equivalent (cf. Theorem \ref{cat-are-equiv}).

Next, we define left (right) modules and bimodules over $HA_\infty$-algebras. It turns out that any $HA_\infty$-algebra is a module over itself. We define a suitable Hochschild cohomology of an $HA_\infty$-algebra with coefficients in itself. Let $(A, \mu_k, \alpha)$ be an $HA_\infty$-algebra with the corresponding square-zero  coderivation $D \in \text{Coder}^{-1}_\alpha (TV)$, where $V= sA$. Then the Hochschild cochains are defined as
\begin{center}
$C^\bullet_\alpha (A, A) := \text{Coder}^{-(\bullet - 1)}_\alpha (TV).$
\end{center}
The coderivation $D$ induces a differential $\delta_\alpha (-) := [D, -]$ on $C^\bullet_\alpha (A, A)$. The cohomology groups are denoted by $H^\bullet_\alpha (A,A)$. 
It is known that the Hochschild cohomology of an $A_\infty$-algebra carries a Gerstenhaber structure \cite{getz-jon}. A similar method can be adapted to show that the Hochschild cohomology of an $HA_\infty$-algebra inherits a Gerstenhaber structure (cf. Theorem \ref{cohomology-g-alg}). Motivated from the formal deformation of hom-associative algebras \cite{amm-ej-makh, makh-sil2}, we study formal deformation of $HA_\infty$-algebras. We show that the deformation is controlled by the Hochschild cohomology of $HA_\infty$-algebras. Our main results about deformations are similar to classical cases. More precisely, the vanishing of the second Hochschild cohomology $H^2_\alpha (A,A)$ implies that the $HA_\infty$-algebra $A$ is rigid (cf. Theorem \ref{2-zero-rigid}) and the vanishing of the third cohomology $H^3_\alpha (A,A)$ allows one to extend a deformation of order $n$ to a deformation of order $n+1$ (cf. Theorem \ref{3-zero-extension}).

Finally, we deal with $HL_\infty$-algebras introduced by Sheng and Chen \cite{sheng-chen}. We define $HL_\infty [1]$-algebras as equivalent notion of $HL_\infty$-algebras by a degree shift. Like $HA_\infty$-algebras, we give an equivalent description of $HL_\infty$-algebras in terms of suitable coderivations (cf. Theorem \ref{hl-coder-thm}). Using this result, we show that a suitable skew-symmetrization of $HA_\infty$-algebras give rise to $HL_\infty$-algebras (cf. Theorem \ref{ha-comm-hl}). At the end, we define module over $HL_\infty$-algebras and Chevalley-Eilenberg cohomology. We also remark about the deformation of $HL_\infty$-algebras.

In the appendix, we introduce some generalizations of $HA_\infty$-algebras and $HL_\infty$-algebras. Namely,
we introduce $HA_\infty$-categories (categorical model of $HA_\infty$-algebras), $HA_\infty$-coalgebras (hom-coalgebras up to homotopy) and $HP_\infty$-algebras (hom-Poisson algebras up to homotopy).

Throughout the paper, we assume that the (graded) linear map $\alpha : A \rightarrow A$ is not identically zero. We also assume that the underlying field has characteristic zero.

\noindent {\bf Sign convention.} In order to deal with graded vector spaces, it is important to keep track of signs. The sign convention is that whenever two symbols of degrees $p$ and $q$, respectively, are interchanged, we multiply by a sign $(-1)^{pq}$. For a graded vector space $V = \oplus V_i$ and graded homogeneous indeterminates $a_1, \ldots, a_n \in V$, a permutation $\sigma \in S_n$, the Koszul sign $\epsilon (\sigma) = \epsilon (\sigma; a_1, \ldots, a_n)$ is given by
\begin{center}
$a_1 \wedge \cdots \wedge a_n = \epsilon (\sigma)~ a_{\sigma (1)} \wedge \cdots \wedge a_{\sigma (n)},$
\end{center}
where $\wedge$ is the product on the free graded commutative algebra generated by $\{a_1, \ldots, a_n\}$. Note that, $\epsilon (\sigma)$ does not involve the usual sign $(-1)^\sigma$ of the permutation. Denote $\chi (\sigma) = \chi (\sigma ; a_1, \ldots, a_n)$ by $\chi (\sigma) = (-1)^\sigma \epsilon (\sigma)$.

For any graded vector spaces $V$ and $W$, a graded linear map $f : V^{\otimes n} \rightarrow W$ (of degree $r$) is called symmetric if
\begin{center}
$ f (a_{\sigma(1)} \otimes \cdots \otimes a_{\sigma(n)}) = \epsilon (\sigma)~ f (a_1 \otimes \cdots \otimes a_n)$
\end{center}
and is called skew-symmetric if
\begin{center}
$ f (a_{\sigma(1)} \otimes \cdots \otimes a_{\sigma(n)}) = \chi (\sigma)~ f (a_1 \otimes \cdots \otimes a_n).$
\end{center}

\medskip

\noindent {\bf Organization.} In Section \ref{sec2}, we recall some basic definitions on hom-associative and hom-Lie algebras. on hom-associative algebra and hom-Lie algebra in the differential graded context. In Section \ref{sec3}, we introduce $HA_\infty$-algebras and other equivalent descriptions.
We also describe homotopy tranfer theorems for $HA_\infty$-algebras. In Section \ref{sec4}, we focus on $2$-term $HA_\infty$-algebras. In particular, we discuss about  skeletal $2$-term $HA_\infty$-algebras and strict $2$-term $HA_\infty$-algebras. Section \ref{sec5} concerns about hom-associative $2$-algebras. In Section \ref{sec6}, we discuss cohomology and deformations of $HA_\infty$-algebras.
Finally, in Section \ref{sec7}, we visit $HL_\infty$-algebras.

\section{Preliminaries}\label{sec2}
In this section, we recall hom-associative algebras and hom-Lie algebras in the differential graded perspectives. For more details on hom-algebras, see \cite{makh-sil, hls, makh-sil2, makh-sil3, amm-ej-makh, das, das2, yau}.

\subsection{DG hom-associative algebras}\label{subsec-two-one}

\begin{defn}
	A hom-associative algebra is a triple $(A, \mu, \alpha)$ consists of a vector space $A$ together with a bilinear map
	$\mu : A \otimes A \rightarrow A$ and a linear map $\alpha : A \rightarrow A$ satisfying $\alpha ( \mu( a, b)) = \mu (\alpha (a) , \alpha (b))$ and the following hom-associativity condition
	\begin{align}\label{hom-ass-cond}
	\mu ( \alpha (a) , \mu ( b , c) ) = \mu ( \mu (a , b) , \alpha (c)), ~ \text{ for all } a, b, c \in A.
	\end{align}
\end{defn}

In \cite{amm-ej-makh} the authors called such a hom-associative algebra `multiplicative'. By a hom-associative algebra, they mean a triple $(A, \mu, \alpha)$ of a vector space $A$, a bilinear map $\mu : A \otimes A \rightarrow A$ and a linear map $\alpha : A \rightarrow A$ satisfying condition (\ref{hom-ass-cond}). When $\alpha =$ identity, in any case, one gets the definition of a classical associative algebra.

Next, we recall the definition of Hochschild type cohomology for hom-associative algebras with coefficients in a hom-bimodule. Let $(A, \mu, \alpha)$ be a hom-associative algebra. A hom-bimodule over it consists of a vector space $M$ together with a linear map $\beta : M \rightarrow M$ and maps $\cdot : A \otimes M \rightarrow M$ and $\cdot : M \otimes A \rightarrow M$ satisfying $\beta (a \cdot m) = \alpha (a) \cdot \beta (m), ~~ \beta (m \cdot a) = \beta (m) \cdot \alpha (a)$ and the followings are hold
\begin{align*}
\alpha (a) \cdot (b \cdot m) &= \mu (a, b) \cdot \beta (m), \\
\beta (m) \cdot \mu (a, b) &= (m \cdot a) \cdot \alpha (b), \\
\alpha (a) \cdot (m \cdot b) &= (a \cdot m) \cdot \alpha (b),
\end{align*}
for all $a, b \in A$ and $m \in M$. The above three relations can be expressed in terms of the commutativity of the following diagrams
\[
\xymatrixrowsep{0.5in}
\xymatrixcolsep{0.3in}
\xymatrix{
A \otimes A \otimes M \ar[r]^{\mu \otimes \beta} \ar[d]_{\alpha \otimes \cdot} & A \otimes M \ar[d]^{\cdot} & M \otimes A \otimes A \ar[r]^{\beta \otimes \mu} \ar[d]_{\cdot \otimes \alpha} & M \otimes A \ar[d]^{\cdot} & A \otimes M \otimes A \ar[r]^{\alpha \otimes \cdot} \ar[d]_{\cdot \otimes \alpha} & A \otimes M \ar[d]^{\cdot} \\
A \otimes M \ar[r]_{\cdot} & M & M \otimes A \ar[r]_{\cdot} & M & M \otimes A \ar[r]_{\cdot} & M 
}
\]

It follows that $M= A$ is a hom-bimodule over $A$ with respect to the linear map $\alpha : A \rightarrow A$.
Given a hom-bimodule $(M, \beta, \cdot)$, one can define a Hochschild type cohomology of $(A, \mu, \alpha)$ with coefficients in the hom-bimodule. The cochain complex is given by
$\big( C^\bullet_{\alpha, \beta} (A,M), \delta_{\alpha, \beta} \big)$ where
\begin{center}
$C^n_{\alpha, \beta} (A, M) := \{   f : A^{\otimes n} \rightarrow M |~f (\alpha (a_1), \ldots, \alpha (a_n) ) = \beta (  f (a_1, \ldots, a_n)  ), \text{ for all } a_i \in A \}$
\end{center}
and 
\begin{align*} 
\delta_{\alpha, \beta} (f) (a_1, \ldots, a_{n+1}) =~&  \alpha^{n-1}(a_1) \cdot f(a_2, \ldots , a_{n+1})  \\
~& + \sum_{i=1}^{n} (-1)^i f \big( \alpha(a_1), \ldots, \alpha (a_{i-1}), \mu (a_i , a_{i+1}), \alpha (a_{i+2}), \ldots, \alpha (a_{n+1}) \big) \\
~& + (-1)^{n+1} f (a_1, \ldots, a_n) \cdot \alpha^{n-1} (a_{n+1}).
\end{align*} 
The cohomology of this complex is called the Hochschild cohomology of $A$ with coefficients in the hom-bimodule $(M, \beta, \cdot)$ and the cohomology groups are denoted by $H^\bullet_{\alpha, \beta} (A, M)$.
When $M= A$ and $\beta = \alpha$, we denote the Hochschild cochain complex by $\big( C^\bullet_\alpha (A,A), \delta_\alpha \big)$ and the cohomology is denoted by $H^\bullet_\alpha (A, A)$. This is called the Hochschild cohomology of $(A, \mu, \alpha)$. Like classical case, these cohomology theory control the deformation of hom-associative algebras \cite{amm-ej-makh}. Recently, the present author showed that the cohomology $H^\bullet_\alpha (A, A)$ inherits a Gerstenhaber algebra structure \cite{das, das2}.

A graded hom-associative algebra consists of a graded vector space $A = \oplus A_i$ together with a bilinear map $\mu : A \otimes A \rightarrow A$ of degree $0$ and a linear map $\alpha : A \rightarrow A$ of degree $0$ satisfying $\alpha (  \mu (a,b)) = \mu (   \alpha (a), \alpha (b))$ and the hom-associativity condition
\begin{center}
$\mu ( \alpha (a) , \mu ( b , c) ) = \mu ( \mu (a , b) , \alpha (c)), ~ \text{ for all } a, b, c \in A.$
\end{center}

A differential graded (DG) hom-associative algebra is a graded hom-associative algebra $(A = \oplus A_i, \mu, \alpha)$ together with a differential $d : A \rightarrow A$ of degree $-1$ satisfying $\alpha \circ d = d \circ \alpha$ and
\begin{center}
$d (   \mu (a,b)) = \mu (da, b) + (-1)^{|a|} \mu (a, db), ~\text{ for all } a, b \in A.$
\end{center}

\subsection{DG hom-Lie algebras}

\begin{defn}
	A hom-Lie algebra is a triple $(L, [-,-], \alpha)$ consists of a vector space $L$ together with a skew-symmetric bilinear map $[-,-] : L \otimes L \rightarrow L$ and a linear map $\alpha : L \rightarrow L$ satisfying $\alpha([a,b])= [\alpha(a), \alpha (b)]$ and the following hom-Jacobi identity
\begin{center}	
	$[[a,b], \alpha(c)] + [ [b,c], \alpha(a)] + [[c,a], \alpha(b)] = 0, \text{ for all } a,b,c \in L.$
	\end{center}
\end{defn}
When $\alpha = $identity, one gets the definition of a Lie algebra. A module over a hom-Lie algebra $(L, [-,-], \alpha)$ consists of a vector space $M$ together with a linear map $\beta : M \rightarrow M$ and a map $[-,-] : L \otimes M \rightarrow M$ satisfying $\beta [a,m] = [\alpha (a), \beta (m)]$ and
\begin{center}
$[[a,b], \beta(m)] = [\alpha (a), [b, m]] - [\alpha (b), [a,m]],~\text{ for all }a,b \in L,~ m \in M.$
\end{center}
It is clear that an hom-Lie algebra is a module over itself. One may also define a Chevalley-Eilenberg cohomology for hom-Lie algebras with coefficients in a module \cite{amm-ej-makh}.

A graded hom-Lie algebra consists of a graded vector space $L = \oplus L_i$ together with a graded skew-symmetric bilinear map $[-,-]: L \otimes L \rightarrow L$ of degree $0$ and a linear map $\alpha : L \rightarrow L$ of degree $0$ satisfying $\alpha ([a,b]) = [\alpha(a), \alpha (b)]$ and the following graded hom-Jacobi identity 
\begin{center}
$(-1)^{|a||c|} ~[[a,b], \alpha(c)] +  (-1)^{|b||a|}~  [ [b,c], \alpha(a)] +
(-1)^{|c||b|}~ [[c,a], \alpha(b)] = 0, \text{ for all } a,b,c \in L.$
\end{center}

A differential graded (DG) hom-Lie algebra is a graded hom-Lie algebra $(L = \oplus L_i, [-,-], \alpha)$ together with a differential $d : L \rightarrow L$ of degree $-1$ satisfying $\alpha \circ d = d \circ \alpha$ and
\begin{center}
$d [a,b] = [da, b] + (-1)^{|a|} [a, db], \text{ for all } a,b \in L.$
\end{center}

\section{Strongly homotopy hom-associative algebras}\label{sec3}

The aim of this section is to introduce hom-analouge of $A_\infty$-algebras or strongly homotopy hom-associative algebras ($HA_\infty$-algebras in short). We also prove homotopy tranfer theorems for $HA_\infty$-algebras.

\subsection{$HA_\infty$-algebras}
\begin{defn}\label{ha}
	An $HA_\infty$-algebra is a graded vector space $A = \oplus A_i$ together with
	\begin{itemize}
		\item[(i)] a collection $\{ \mu_k | ~ 1 \leq k < \infty \}$ of linear maps
		$\mu_k : A^{\otimes k} \rightarrow A$ with deg~$(\mu_k) = k-2,$
		\item[(ii)] a linear map $\alpha : A \rightarrow A$ of degree $0$ with
$$	\alpha \big(	\mu_k (a_1, \ldots, a_k)  \big) = \mu_k \big( \alpha (a_1), \ldots, \alpha (a_k) \big)$$
	\end{itemize}
such that for all $n \geq 1$,
\begin{align}\label{ha-eqn}
\sum_{i+j = n+1}^{} \sum_{\lambda =1}^{j} (-1)^{\lambda (i+1) + i (|a_1| + \cdots + |a_{\lambda -1 }|)} ~ \mu_{j} \big(  \alpha^{i-1}a_1, \ldots, \alpha^{i-1} a_{\lambda -1}, \mu_i ( a_{\lambda}, \ldots, a_{\lambda + i-1}),\\
 \alpha^{i-1} a_{\lambda + i}, \ldots, \alpha^{i-1} a_n   \big) = 0 \nonumber,
\end{align}
for $a_i \in A_{|a_i|}, ~ 1 \leq i \leq n$, or, equivalently,
\begin{align}\label{some-ha-identity}
\sum_{r + s + t = n, r , t \geq 0, s \geq 1}^{} (-1)^{rs + t}~ \mu_{r + 1 + t} ( (\alpha^{s-1})^{\otimes r} \otimes \mu_s \otimes (\alpha^{s-1})^{\otimes t}  ) = 0.
\end{align}
\end{defn}

An $HA_\infty$-algebra as above is denoted by $(A, \mu_k, \alpha).$ When $\alpha = \text{identity}$, one gets the definition of an $A_\infty$-algebra introduced by Stasheff \cite{sta}. When $A$ is a vector space considered as a graded vector space concentrated in degree $0$, we get hom-associative algebras \cite{makh-sil}.

\medskip

The above definition of an $HA_\infty$-algebra has the following consequences. For $n=1$, we get
$\mu_1^2 = 0,$
which means that the degree $-1$ map $\mu_1 : A \rightarrow A$ is a differential. Therefore, $(A, \mu_1)$ is a chain complex. For $n=2$, we get
\begin{center}
$ \mu_1 \big( \mu_2 (a, b) \big) = \mu_2 \big( \mu_1(a), b \big) + (-1)^{|a|}~ \mu_2 \big(a, \mu_1(b) \big), ~~~ \text{ for } a, b \in A.$
\end{center}
It says that the differential $\mu_1$ is a graded derivation for the product $\mu_2$.
For $n=3$, we have
\begin{align*}
&\mu_2 \big(  \alpha (a), \mu_2 (b, c) \big) - \mu_2 \big( \mu_2 (a, b), \alpha (c) \big)\\
& = - \bigg\{ \mu_1 \big(   \mu_3 (a, b, c) \big) + \mu_3 \big(  \mu_1 (a), b, c \big) + (-1)^{|a|} \mu_3 (a, \mu_1 (b), c) + (-1)^{|a| + |b|} \mu_3 (a, b, \mu_1 (c))  \bigg\}.
\end{align*}
This shows that the product $\mu_2$ does not satisfy (in general) the graded hom-associativity condition. However, it does satisfy up to a term involving $\mu_3$. Similarly, for higher $n$, we get higher coherence laws that $\mu_k$'s must satisfy. It is now easy to see that a graded hom-associative algebra is an $HA_\infty$-algebra with $\mu_k = 0$, for $k \neq 2$, and, 
a DG hom-associative algebra is an $HA_\infty$-algebra with $\mu_k = 0$, for $k \geq 3$.

It also follows from the above observation that the homology $H_* (A) = H_* (A, \mu_1)$ of a $HA_\infty$-algebra $(A, \mu_k, \alpha)$ carries a graded hom-associative algebra. 

\begin{exam}
Let $(A, \mu_k)$ be an $A_\infty$-algebra and $\alpha : A \rightarrow A$ be a degree $0$ map satisfying $\alpha \circ \mu_k = \mu_k \circ \alpha^{\otimes k}$, for all $k \geq 1$. Such a map $\alpha$ is called a strict morphism. If $\alpha: A \rightarrow A$ is a strict morphism, then $(A, \alpha^{k-1} \circ \mu_k, \alpha)$ is an $HA_\infty$-algebra.
\end{exam}

\begin{remark}\label{rem-ha-derivation}
Let $(A, \mu_k, \alpha)$ be an $HA_\infty$-algebra. Then $\mu_1 : A \rightarrow A$ is a differential, denoted by $d$. For $n \geq 2$, the derivation of $\mu_n : A^{\otimes n} \rightarrow A$ is given by
\begin{align*}
\partial (\mu_n) = [d, \mu_n] =~& d \circ \mu_n - (-1)^{n-2} \mu_n \circ d \\
=~& d \circ \mu_n - (-1)^{n-2} \mu_n \big(  (d, \text{id}, \ldots, \text{id}) + (\text{id}, d, \text{id}, \ldots, \text{id}) + \cdots + (\text{id}, \ldots, \text{id}, d)   \big).
\end{align*}
Then the relations (\ref{some-ha-identity}) for $n \geq 2$ becomes
\begin{align}\label{ha-derivation-defn}
\partial (\mu_n) =  \begin{cases}
~0, &  \mbox{   if } n = 2,\\
~- \sum_{r + s + t = n, r, t \geq 0, 1< s < n}^{} (-1)^{rs + t} ~ \mu_{r + 1 + t} ( (\alpha^{s-1})^{\otimes r} \otimes \mu_s \otimes (\alpha^{s-1})^{\otimes t} ), & \mbox{ if } n \geq 3.
\end{cases}
\end{align}
\end{remark}

\medskip

Motivated from the classical case, we also introduce an equivalent notion of $HA_\infty [1]$-algebra.
\begin{defn}\label{ha1}
An $HA_\infty [1]$-algebra structure on a graded vector space $V$ consists of degree $-1$ multilinear maps $\varrho_k : V^{\otimes k} \rightarrow V$, $k \geq 1$, and a linear map $\alpha : V \rightarrow V$ of degree $0$ such that
\begin{center}
$ \alpha \big( \varrho_k ( v_1, \ldots, v_k ) \big) = \varrho_k \big( \alpha(v_1), \ldots, \alpha(v_k) \big)$
\end{center}
and for all $n \geq 1$,
\begin{align}
\sum_{i+j = n+1}^{} \sum_{\lambda=1}^{j} (-1)^{|v_1| + \cdots + |v_{\lambda -1}|}~ \varrho_j \big( \alpha^{i-1 }v_1, \ldots, \alpha^{i-1}v_{\lambda -1}, \varrho_i (v_\lambda, \ldots, v_{\lambda + i-1}), \ldots, \alpha^{i-1} v_n  \big) = 0,
\end{align}
for $v_1, \ldots, v_n \in V$, or, equivalently,
\begin{align}
\sum_{r+s + t = n, r, t \geq 0, s \geq 1}^{} m_{r + 1 + t} \big(  (\alpha^{s-1})^{\otimes r} \otimes m_s \otimes (\alpha^{s-1})^{\otimes t}  \big) = 0.
\end{align}
\end{defn}

\medskip

Like classical case, an $HA_\infty$-structure is related to an $HA_\infty[1]$-structure by a degree shift. 
Let $(A, \mu_k, \alpha)$ be an $HA_\infty$-algebra. Take $V = s A$, where $V_i = (s A)_i = A_{i-1}$. Define
\begin{align}
\varrho_k = (-1)^{\frac{k(k-1)}{2}}~s \circ \mu_k \circ (s^{-1})^{\otimes k},
\end{align}
where $s : A \rightarrow sA$ is a degree $+1$ map and $s^{-1} : sA \rightarrow A$ is a degree $-1$ map. Note that $\mu_k$ can be reconstructed from $\varrho_k$ as $\mu_k = s^{-1} \circ \varrho_k \circ s^{\otimes k}.$ The sign $(-1)^{\frac{k(k-1)}{2}}$ is a consequence of the Koszul sign convention. See \cite{lada-markl} for more details. Then we have the following.
\begin{prop}\label{ha-ha1-prop}
An $HA_\infty$-algebra structure on a graded vector space $A$ is equivalent to an $HA_\infty [1]$-structure on the graded vector space $V = sA$. 
\end{prop}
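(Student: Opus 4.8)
The plan is to show that the degree-shift map $s$ sets up a dictionary between the two sets of structure maps under which the defining identities correspond. I would begin by recording the two obvious bookkeeping facts: first, that $\mu_k$ has degree $k-2$ if and only if $\varrho_k := (-1)^{k(k-1)/2}\, s\circ\mu_k\circ(s^{-1})^{\otimes k}$ has degree $-1$ (since conjugating by $s$ shifts degree by $1 - k$), and second, that $\alpha$ commutes with $\mu_k$ in the sense of Definition \ref{ha}(ii) if and only if $\alpha$ (viewed on $V=sA$ via $s\circ\alpha\circ s^{-1}$, which has the same degree $0$) commutes with $\varrho_k$ in the sense of Definition \ref{ha1}; this last equivalence is immediate because $s$ and $s^{-1}$ are invertible and the commutation relations are conjugate to one another. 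The substantive content is then entirely the equivalence of the two families of quadratic identities.

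Next I would carry out the sign comparison between \eqref{ha-eqn} and the identity in Definition \ref{ha1}. Substituting $\mu_i = s^{-1}\circ\varrho_i\circ s^{\otimes i}$ and $\mu_j = s^{-1}\circ\varrho_j\circ s^{\otimes j}$ into the $n$-th relation \eqref{ha-eqn} and moving all copies of $s^{-1}$ and $s$ past the graded homogeneous inputs $a_1,\dots,a_n$ using the Koszul sign rule, one picks up: the normalization signs $(-1)^{i(i-1)/2}$ and $(-1)^{j(j-1)/2}$ from the definitions of $\varrho_i,\varrho_j$; a sign from commuting the inner $s^{-1}$ (degree $-1$) past $a_1,\dots,a_{\lambda-1}$ and their shifts, which contributes $(-1)^{|a_1|+\cdots+|a_{\lambda-1}|}$ after accounting for the degree shift; and a sign from commuting the outer $s^{-1}$ past the first $\lambda-1$ shifted inputs and the output of $\mu_i$, together with the interplay with the explicit sign $(-1)^{\lambda(i+1)+i(|a_1|+\cdots+|a_{\lambda-1}|)}$ already present in \eqref{ha-eqn}. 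The claim is that all the homogeneity-dependent contributions beyond $(-1)^{|v_1|+\cdots+|v_{\lambda-1}|}$ cancel, and that the purely combinatorial signs $(-1)^{\lambda(i+1)}\,(-1)^{i(i-1)/2}\,(-1)^{j(j-1)/2}$ combine with the sign produced by reordering the $s$'s and $s^{-1}$'s into an overall factor independent of $\lambda$, which may therefore be cancelled from the whole equation. This is precisely the shift-of-grading computation carried out in the non-hom case in \cite{lada-markl}, and since $\alpha^{i-1}$ is a degree-$0$ map it passes freely through all the $s$'s without introducing new signs; thus the presence of the twisting maps $\alpha^{i-1}$ changes nothing in the sign analysis.

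Finally I would observe that the operadic form \eqref{some-ha-identity} versus the unsigned form in Definition \ref{ha1} is just the coordinate-free repackaging of the same computation: writing $m_s$ for the maps on $V$ with the operadic sign absorbed, the relation $\sum (\alpha^{s-1})^{\otimes r}\otimes m_s\otimes(\alpha^{s-1})^{\otimes t}=0$ with no explicit $(-1)^{rs+t}$ is exactly what \eqref{some-ha-identity} becomes after conjugation by $s$, because the factor $(-1)^{rs+t}$ is exactly the Koszul sign incurred when sliding the relevant $s^{-1}$ past the $r$ copies of (shifted, degree-summed) inputs feeding $\mu_s$ plus the $t$ trailing slots. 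Assembling these pieces: an $HA_\infty$-structure $(A,\mu_k,\alpha)$ yields $(V=sA,\varrho_k,\alpha)$ satisfying Definition \ref{ha1}, and conversely $\mu_k = s^{-1}\circ\varrho_k\circ s^{\otimes k}$ recovers an $HA_\infty$-structure, with the two constructions mutually inverse; this gives the asserted equivalence.

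I expect the main obstacle to be purely the sign bookkeeping in the second paragraph — verifying that the $\lambda$-dependent signs in \eqref{ha-eqn} are reproduced exactly (not merely up to a global sign) by the Koszul reordering of the suspension maps. Once one trusts the classical computation of \cite{lada-markl}, the only genuinely new point is the trivial observation that the degree-$0$ twisting maps $\alpha^{i-1}$ do not interfere, so no real additional work is needed beyond citing the classical shift.
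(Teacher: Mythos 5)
Your proposal is correct and follows essentially the same route as the paper, which simply defines $\varrho_k = (-1)^{k(k-1)/2}\, s\circ\mu_k\circ(s^{-1})^{\otimes k}$, notes $\mu_k = s^{-1}\circ\varrho_k\circ s^{\otimes k}$, and refers the Koszul sign bookkeeping to \cite{lada-markl}, exactly as you do (the paper gives no further detail). Your added observations — that the degree count gives $|\varrho_k|=-1$, that the $\alpha$-compatibility conditions are conjugate to each other, and that the degree-$0$ twists $\alpha^{i-1}$ introduce no new signs — are the correct and only genuinely hom-specific points, so the proposal matches the paper's argument.
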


\begin{remark}
Note that Definition \ref{ha} is natural in the sense that the differential $\mu_1$ has degree $-1$, the product $\mu_2$ preserves the degree and $\mu_n$'s are higher homotopies, for $n \geq 3$. However, one can see that Definition \ref{ha1} is simpler in sign.
\end{remark}

\begin{defn}
Let $(A, \mu_k, \alpha)$ and $(A', \mu_k', \alpha')$ be two $HA_\infty$-algebras. A morphism $f : A \rightarrow A'$ of $HA_\infty$-algebras consists of maps $f_k : A^{\otimes k} \rightarrow A'$ of degree $k-1$, for $k \geq 1$, such that
\begin{center}
$f_k \big( \alpha (a_1) \otimes \cdots \otimes \alpha (a_k ) \big) = \alpha' \big( f_k (a_1 \otimes \cdots \otimes a_k) \big)$
\end{center}
and for all $n \geq 1$,
\begin{align}\label{ha-map-id}
&\sum_{r+s+t = n, r , t \geq 0, s \geq 1}^{}  (-1)^{rs + t}~  f_{r+1+t} ( (\alpha^{s-1})^{\otimes r} \otimes \mu_s \otimes (\alpha^{s-1})^{\otimes t} ) \\
& \qquad \qquad = \sum_{l=1}^{n} \sum_{i_1 + \cdots + i_l = n, i_1, \ldots, i_l \geq 1}^{} (-1)^{u}~ \mu_l' ( \alpha'^{(i_2 -1)+ \cdots + (i_l -1)} f_{i_1} \otimes \cdots \otimes \alpha'^{(i_1 -1)+ \cdots + (i_{l-1} - 1)} f_{i_l}  ), \nonumber
\end{align}
where $u = (l-1) (i_1 -1) + (l-2) (i_2 -1) + \cdots + 2 (i_{l-2}-1) + (i_{l-1} - 1)$.
\end{defn}

If $f : A \rightarrow A'$ is a $HA_\infty$-algebra morphism, then $f_1 : A \rightarrow A'$ is a morphism of complexes which is compatible with $\mu_2$ up to a homotopy given by $f_2$. Therefore, $f_1$ induces a graded hom-associative algebra morphism at the level of homology.

A morphism $f : A \rightarrow A'$ of $HA_\infty$-algebras is called strict if $f_k = 0$, for all $k \geq 2$. In this case, the identity (\ref{ha-map-id}) becomes
$$f_1 \circ \mu_n = \mu_n' \circ (f_1 \otimes \cdots \otimes f_1).$$
Therefore, strict morphisms of $HA_\infty$-algebras are analogous to hom-associative algebra morphisms. A strict morphism $f : A \rightarrow A'$ is called strict isomorphism if $f_1$ is an isomorphism.

A morphism $f : A \rightarrow A'$ of $HA_\infty$-algebras is called a quasi-isomorphism if $f_1 : A \rightarrow A'$ is a quasi-isomorphism of complexes. In other words, the induced map $H_* (f_1) : H_* (A) \rightarrow H_* (A')$ is an isomorphism of graded hom-associative algebras. In this case, we write $A \simeq A'$. 

We also introduce a notion of morphism between $HA_\infty[1]$-algebras.

\begin{defn}
Let $(V, \varrho_k, \alpha)$ and $(V', \varrho_k', \alpha')$ be two $HA_\infty[1]$-algebras. A morphism $f: V \rightarrow V'$ between them consists of multilinear maps $f_k : V^{\otimes k} \rightarrow V'$ of degree $0$, for $k \geq 1$, such that 
\begin{center}
	$f_k \big( \alpha (v_1) \otimes \cdots \otimes \alpha (v_k ) \big) = \alpha' \big( f_k (v_1 \otimes \cdots \otimes v_k) \big)$
\end{center}
 and for all $n \geq 1$,
\begin{align}
& \sum_{r+s+t = n, r , t \geq 0, s \geq 1}^{} ~  f_{r+1+t} ( (\alpha^{s-1})^{\otimes r} \otimes \varrho_s \otimes (\alpha^{s-1})^{\otimes t} ) \\
& \qquad \qquad = \sum_{l=1}^{n} \sum_{i_1 + \cdots + i_l = n, i_1, \ldots, i_l \geq 0}^{}~ \varrho_l' ( \alpha'^{(i_2 -1)+ \cdots + (i_l -1)} f_{i_1} \otimes \cdots \otimes \alpha'^{(i_1 -1)+ \cdots + (i_{l-1} - 1)} f_{i_l} ). \nonumber
\end{align}
\end{defn}

Let $f : V \rightarrow V'$ and $g : V' \rightarrow V''$ be two $HA_\infty[1]$-algebra morphism. Then their composition $g \circ f : V \rightarrow V''$ is given by
$$(g \circ f)_k = \sum_{i_1 + \cdots + i_l = k, i_1, \ldots, i_l \geq 1}^{}  g_l \circ (  f_{i_1} \otimes \cdots \otimes f_{i_l}).$$

\subsection{Coderivation interpretation}\label{ha-coder-subsec}

Let $V$ be a graded vector space. Consider the graded vector space $TV = \oplus_{k \geq 0} V^{\otimes k}$ with the coproduct $\triangle : TV \rightarrow (TV) \otimes (TV)$ defined by
\begin{center}
$\triangle (v_1 \otimes \cdots \otimes v_n) = 
 \sum_{i=0}^{n} ( v_1 \otimes \cdots \otimes  v_i) \otimes ( v_{i+1} \otimes \cdots \otimes v_n).$
\end{center}
Note that, if $\alpha : V \rightarrow V$ be a linear map of degree $0$, then $\alpha$ induces a coalgebra map
(denoted by the same symbol) $\alpha : TV \rightarrow TV$ defined by
\begin{center}
$\alpha (v_1 \otimes \cdots \otimes v_n) = \alpha (v_1) \otimes \cdots \otimes \alpha (v_n).$
\end{center}

\medskip

The following lemma is very useful to derive an equivalent description of $HA_\infty$-algebras.

\begin{lemma}\label{lifting-lemma}
 Let $V$ be a graded vector space and $\alpha : V \rightarrow V$ be a linear map of degree $0$. Let $\varrho : V^{\otimes n} \rightarrow V$ be a linear map $($of degree $|\varrho|)$ satisfying $\alpha \circ \varrho = \varrho \circ \alpha^{\otimes n},$ that is,
\begin{center}
	$ \alpha ( \varrho (v_1 \otimes \cdots \otimes v_n)) = \varrho (   \alpha (v_1) \otimes \cdots \otimes \alpha (v_n) ).$
	\end{center}
	View $\varrho$ as a linear map $\varrho : TV \rightarrow V$ by letting its only non-zero component being given by the original $\varrho$ on $V^{\otimes n}$. Then $\varrho$ lifts uniquely to a map $\widetilde{\varrho} : TV \rightarrow TV$ $($of degree $|\varrho|)$ satisfying $\alpha \circ \widetilde{\varrho} = \widetilde{\varrho} \circ \alpha$ and
\begin{align}\label{coder-lemma}
	\triangle \circ \widetilde{\varrho} = (\widetilde{\varrho} \otimes \alpha^{n-1} + \alpha^{n-1} \otimes \widetilde{\varrho}) \circ \triangle.
\end{align}
	More precisely, $\widetilde{\rho}$ is given by\\
$\widetilde{\varrho} (v_1 \otimes \cdots \otimes v_k) :=$
$$\begin{cases}0, ~~~ & \mbox{ if } k < n,\\
\sum_{i=0}^{k-n} (-1)^{|\varrho| (|v_1|+ \cdots + |v_i|)}~ \alpha^{n-1}v_1 \otimes \cdots \otimes \alpha^{n-1} v_i \otimes \varrho (v_{i+1} \otimes \cdots \otimes v_{i+n}) \otimes \cdots \otimes \alpha^{n-1} v_k, & \mbox{ if } k \geq n.
\end{cases} $$
Thus, $\widetilde{\varrho}$ maps $V^{\otimes k} \rightarrow V^{\otimes k -n +1}$, for $k \geq n$.	
\end{lemma}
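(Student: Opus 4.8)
The plan is to establish existence and uniqueness separately. Write $\pi : TV \to V$ for the projection onto the $V^{\otimes 1}$-summand, so that a lift of $\varrho$ means a map $\widetilde{\varrho} : TV \to TV$ with $\pi \circ \widetilde{\varrho} = \varrho$. For existence, I would take $\widetilde{\varrho}$ to be the map defined by the displayed case formula (so in particular $\widetilde{\varrho}$ is zero on $V^{\otimes 0}$). Each summand contains exactly one copy of $\varrho$ and otherwise only copies of $\alpha^{n-1}$, which has degree $0$; hence $\widetilde{\varrho}$ is homogeneous of degree $|\varrho|$ and sends $V^{\otimes k}$ into $V^{\otimes k-n+1}$, which is $V^{\otimes 1}$ precisely when $k=n$, so $\pi \circ \widetilde{\varrho} = \varrho$. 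For $\alpha \circ \widetilde{\varrho} = \widetilde{\varrho} \circ \alpha$: since $\alpha$ has degree $0$ it leaves the Koszul signs $(-1)^{|\varrho|(|v_1|+\cdots+|v_i|)}$ untouched, $\alpha^{n-1} \circ \alpha = \alpha \circ \alpha^{n-1}$, and $\varrho(\alpha v_{i+1} \otimes \cdots \otimes \alpha v_{i+n}) = \alpha(\varrho(v_{i+1} \otimes \cdots \otimes v_{i+n}))$ by hypothesis, so the two composites agree summand by summand. The substantial step is the identity $(\ref{coder-lemma})$: evaluate both sides on $v_1 \otimes \cdots \otimes v_k$, applying the deconcatenation coproduct $\triangle$ to each summand of $\widetilde{\varrho}(v_1 \otimes \cdots \otimes v_k)$. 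Since $\varrho(v_{i+1} \otimes \cdots \otimes v_{i+n})$ occupies a single tensor slot, the cut falls either weakly to the left of that slot or weakly to its right; recognizing the untouched factor as $\alpha^{n-1}$ of a sub-word and the other as a summand of $\widetilde{\varrho}$ applied to a sub-word, the ``weakly left'' pieces reassemble into $(\alpha^{n-1} \otimes \widetilde{\varrho}) \circ \triangle$ and the ``weakly right'' pieces into $(\widetilde{\varrho} \otimes \alpha^{n-1}) \circ \triangle$, with the two endpoint cuts (flush against the slot) fitting in without extra signs. The only delicate point is the sign: $(-1)^{|\varrho|(|v_1|+\cdots+|v_i|)}$ depends only on the total degree of the tensor factors to the left of the $\varrho$-slot, hence is additive under splitting that list at the cut, matching exactly the Koszul sign $(-1)^{|\varrho|\,|x|}$ produced when $\widetilde{\varrho}$ is transported past a leading factor $x$ in $(\alpha^{n-1} \otimes \widetilde{\varrho})(x \otimes y)$, while $\alpha^{n-1}$ (degree $0$) contributes no sign in $(\widetilde{\varrho} \otimes \alpha^{n-1})(x \otimes y)$.

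For uniqueness, let $\widetilde{\varrho}'$ be another lift with the same three properties and set $D = \widetilde{\varrho}' - \widetilde{\varrho}$; then $D$ commutes with $\alpha$, satisfies $(\ref{coder-lemma})$, and $\pi \circ D = 0$. I claim $D = 0$ by induction on the word length $k$, the statement being $D|_{V^{\otimes k}} = 0$. For $k = 0$, applying $(\ref{coder-lemma})$ to $1 \in V^{\otimes 0}$ and using $\triangle 1 = 1 \otimes 1$ and that $\alpha$ fixes $1$ gives $\triangle D(1) = D(1) \otimes 1 + 1 \otimes D(1)$, so $D(1)$ is primitive in $TV$; since the primitives of the deconcatenation coalgebra are precisely $V^{\otimes 1}$ (an elementary check, using that the base field has characteristic zero to kill the $V^{\otimes 0}$-part) and $\pi D(1) = 0$, we get $D(1) = 0$. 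For the inductive step, apply $(\ref{coder-lemma})$ to $w = v_1 \otimes \cdots \otimes v_k$ and expand $\triangle w = \sum_{i=0}^{k} (v_1 \otimes \cdots \otimes v_i) \otimes (v_{i+1} \otimes \cdots \otimes v_k)$: the $i=0$ and $i=k$ terms contribute $1 \otimes D(w) + D(w) \otimes 1$ (using $D(1) = 0$ and that $\alpha^{n-1}$ fixes scalars), while every term with $0 < i < k$ vanishes because $D$ kills words of length $<k$; hence $\triangle D(w) = 1 \otimes D(w) + D(w) \otimes 1$, so $D(w) \in V^{\otimes 1}$, and $\pi D(w) = 0$ forces $D(w) = 0$. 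This closes the induction and gives $\widetilde{\varrho}' = \widetilde{\varrho}$.

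I expect the genuine obstacle to be confined to the sign bookkeeping in the verification of $(\ref{coder-lemma})$: one must check that splitting the index set at the coproduct cut, together with the Koszul sign incurred when $\widetilde{\varrho}$ or $\alpha^{n-1}$ crosses a tensor factor, reproduces exactly the sign in the defining formula, and that nothing spurious appears when the cut lands flush against the $\varrho$-slot. The degree count, the commutation with $\alpha$, the identity $\pi \circ \widetilde{\varrho} = \varrho$, and the uniqueness induction are all routine once the formula is in hand.
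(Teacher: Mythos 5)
Your proposal is correct. The existence half (checking the degree, the commutation with $\alpha$, the identity $\pi \circ \widetilde{\varrho} = \varrho$, and the twisted coderivation identity (\ref{coder-lemma}) by sorting the deconcatenation cuts into those weakly left and weakly right of the $\varrho$-slot, with the additive sign bookkeeping) is exactly the verification the paper dismisses as ``easy to verify,'' so there you simply supply the omitted details. Your uniqueness argument, however, runs along a genuinely different line from the paper's. The paper fixes a second lift $\overline{\varrho}$, decomposes it into components $\overline{\varrho}^m : TV \rightarrow V^{\otimes m}$, and projects the identity (\ref{coder-lemma}) onto $V^{\otimes (m-1)} \otimes V^{\otimes 1}$ to determine $\overline{\varrho}^m$ recursively from $\overline{\varrho}^{m-1}$ and $\overline{\varrho}^1 = \varrho$ — an induction on the length of the \emph{output} word that in effect re-derives the explicit formula, so it doubles as a consistency check on the stated expression for $\widetilde{\varrho}$. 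You instead subtract the two lifts, observe that the difference $D$ still satisfies (\ref{coder-lemma}) (which is linear in $\widetilde{\varrho}$) and has $\pi \circ D = 0$, and kill $D$ on $V^{\otimes k}$ by induction on the \emph{input} length, the key point being that an element satisfying $\triangle x = x \otimes 1 + 1 \otimes x$ in the deconcatenation coalgebra lies in $V^{\otimes 1}$ (your characteristic-zero remark, needed only to rule out the $V^{\otimes 0}$-part, is fine since the paper assumes characteristic zero throughout). Your route is slightly more conceptual and avoids reconstructing the formula during the uniqueness step, at the cost of invoking the primitivity fact; the paper's route is more computational but makes the shape of $\widetilde{\varrho}$ transparent. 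Either argument is complete and correct.
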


\begin{proof}
	It is easy to verify that $\widetilde{\varrho}$ satisfies the above properties. Next, we prove the uniqueness. Let $\overline{\rho} : TV \rightarrow TV$ be another such map satisfying the above properties. Let $\overline{\rho}^j$ denote the component of $\overline{\rho}$ mapping $ {(TV)} \rightarrow V^{\otimes j}.$ Since $\overline{\rho}$ satisfies (\ref{coder-lemma}), we have
\begin{align*}
\triangle (\overline{\rho} (v_1 \otimes \cdots \otimes v_k)) =~& (\overline{\rho} \otimes \alpha^{n-1} + \alpha^{n-1} \otimes \overline{\rho}) \triangle (v_1 \otimes \cdots \otimes v_k) \\
=~&  (\overline{\rho} \otimes \alpha^{n-1} + \alpha^{n-1} \otimes \overline{\rho}) \big\{ \sum_{i=0}^{k} (v_1 \otimes \cdots \otimes v_i) \otimes ( v_{i+1} \otimes \cdots \otimes v_k) \big\} \\
=~& \sum_{i=0}^{k} \big\{ \overline{\rho} (v_1 \otimes \cdots \otimes v_i )  \otimes (\alpha^{n-1} v_{i+1} \otimes \cdots \otimes \alpha^{n-1} v_k)  \\
&\quad  + (-1)^{|\overline{\rho}| (|v_1|+ \cdots + |v_i|)} (\alpha^{n-1}v_1 \otimes \cdots \otimes \alpha^{n-1} v_i) \otimes \overline{\rho} ( v_{i+1} \otimes \cdots \otimes v_k) \big\}.
\end{align*}
By projecting both sides to $V^{\otimes i} \otimes V^{\otimes j} \subset~ {(TV)} \otimes {(TV)}$ with $i+j = m$, we get
\begin{align*}
\triangle (\overline{\rho}^m (v_1 \otimes \cdots \otimes v_k))|_{V^{\otimes i} \otimes V^{\otimes j}} =~& \overline{\rho}^i ( v_1 \otimes \cdots \otimes v_{k-j}) \otimes (\alpha^{n-1}v_{k-j+1} \otimes \cdots \otimes \alpha^{n-1}v_k) \\
+& (-1)^{|\overline{\rho}| (|v_1|+ \cdots + |v_i|)} (\alpha^{n-1}v_1 \otimes \cdots \otimes \alpha^{n-1} v_i) \otimes \overline{\rho}^j ( v_{i+1}\otimes \cdots \otimes v_k).
\end{align*}

We have $\overline{\varrho}^1 = \varrho$ from the assumption.
Hence, for $m=i=1$ and $j=0$, we get $\overline{\rho}^0 = 0$. For $m=2$, choose $i=1$ and $j=1$, we get that $\overline{\varrho}^2$ is non-zero on $V^{\otimes n+1}$ and is given by 
\begin{center}
$\overline{\varrho}^2 (v_1 \otimes \cdots \otimes v_{n+1})= \varrho (v_1 \otimes \cdots \otimes v_n) \otimes \alpha^{n-1} v_{n+1} ~+~ (-1)^{|\varrho||v_1|} \alpha^{n-1} v_1 \otimes \varrho (v_2 \otimes \cdots \otimes v_{n+1}).$
\end{center}
We use induction to determine $\overline{\varrho}^m$, for any $m \geq 2$. More precisely, for any $m \geq 2$, we choose $i = m-1$ and $j = 1$ to determine $\overline{\varrho}^m$ uniquely by using $\overline{\varrho}^{m-1}$ and $\overline{\varrho}^1$. It can be checked that $\overline{\varrho}^m$ is non-zero only on $V^{\otimes n+m-1}$ and is given by
\begin{align*}
\overline{\varrho}^m &(v_1 \otimes \cdots \otimes v_{n+m-1}) \\&= \sum_{i=0}^{m-1} (-1)^{|\varrho| (|v_1| + \cdots + |v_i|)} \alpha^{n-1} v_1 \otimes \cdots \otimes \alpha^{n-1} v_i \otimes \varrho (v_{i+1} \otimes \cdots \otimes v_{i+n}) \otimes \cdots \otimes \alpha^{n-1} v_{n+m-1}.
\end{align*}
Hence, the proof.
\end{proof}

Let
\begin{center}
$\text{Coder}^p_\alpha (TV) = \{ D = \sum_{n \geq 1} \widetilde{\varrho_n} |~ \varrho_n : V^{\otimes n} \rightarrow V \text{ is a map of degree } p \text{ satisfying } \alpha \circ \varrho_n = \varrho_n \circ \alpha^{\otimes n} \}.$
\end{center}
We first define a graded Lie algebra structure on $\text{Coder}^\bullet_\alpha (TV)$. Let $D = \sum_{n \geq 1} \widetilde{\varrho_n} \in \text{Coder}^p_\alpha (TV)$ and $D' = \sum_{m \geq 1} \widetilde{\sigma_m} \in \text{Coder}^q_\alpha (TV)$. Then each $\varrho_n : V^{\otimes n} \rightarrow V$ is a map of degree $p$ and each $\sigma_m : V^{\otimes m} \rightarrow V$ is a map of degree $q$. We define
$$D \circ D' = \sum_{n \geq 1} \widetilde{\varrho_n} \circ \sum_{m \geq 1} \widetilde{\sigma_m} = \sum_{n, m \geq 1} \widetilde{\varrho_n} \circ \widetilde{\sigma_m} \in \text{Coder}^{p+q}_\alpha (TV).$$
It can be checked that the product $\circ$ defines a graded pre-Lie algebra structure on $\text{Coder}^\bullet_\alpha (TV).$ Hence, the graded commutator
\begin{center}
$[D,D'] = D \circ D' - (-1)^{pq} D' \circ D,$
\end{center}
for $D \in \text{Coder}^p_\alpha (TV)$ and $D' \in \text{Coder}^q_\alpha (TV)$, defines a graded Lie algebra structure on $\text{Coder}^\bullet_\alpha (TV).$

\begin{prop}\label{coder-coder-zero-equiv}
	Let $V$ be a graded vector space and $\alpha : V \rightarrow V$ be a linear map of degree $0$. Let $D = \sum_{n \geq 1} \widetilde{\varrho_n} \in \text{Coder}^{-1}_{\alpha} (TV)$. Then the condition $D \circ D = 0$ is equivalent to the following equations:
	\begin{align*}
	& \varrho_1 (\varrho_1 (v_1)) = 0,\\
	& \varrho_1 ( \varrho_2 (v_1, v_2)) + \varrho_2 (  \varrho_1 (v_1), v_2) + (-1)^{|v_1|} \varrho_2 (v_1, \varrho_1 (v_2)) = 0,\\
	& \cdots \\
	& \sum_{i+j = n+1}^{} \sum_{\lambda=1}^{j} (-1)^{|v_1| + \cdots + |v_{\lambda -1}|}~ \varrho_j \big( \alpha^{i-1 }v_1, \ldots, \alpha^{i-1}v_{\lambda -1}, \varrho_i (v_\lambda, \ldots, v_{\lambda + i-1}), \ldots, \alpha^{i-1} v_n  \big) = 0,\\
	& \cdots.
	\end{align*}
\end{prop}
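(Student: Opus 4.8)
The plan is to expand the condition $D \circ D = 0$ componentwise, using the explicit formula for $\widetilde{\varrho}$ from Lemma \ref{lifting-lemma}. First I would note that since $D = \sum_{n \geq 1} \widetilde{\varrho_n}$ with each $\varrho_n$ of degree $-1$, the composite $D \circ D = \sum_{n, m \geq 1} \widetilde{\varrho_n} \circ \widetilde{\varrho_m}$ lies in $\text{Coder}^{-2}_\alpha(TV)$. The key structural observation is that an element of $\text{Coder}^\bullet_\alpha(TV)$ is uniquely determined by its corapping onto the cogenerators, i.e. by the composite $\pi_1 \circ (-) : TV \to TV \to V$ where $\pi_1$ is projection onto $V^{\otimes 1}$; this follows from the uniqueness part of Lemma \ref{lifting-lemma}. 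Therefore $D \circ D = 0$ if and only if $\pi_1 \circ D \circ D = 0$ as a map $TV \to V$, and it suffices to evaluate this on each summand $V^{\otimes n}$.

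Next I would compute $\pi_1 \circ \widetilde{\varrho_j} \circ \widetilde{\varrho_i}$ restricted to $V^{\otimes n}$ where $n = i + j - 1$ (these are the only pairs $(i,j)$ contributing, since $\widetilde{\varrho_i}$ sends $V^{\otimes n} \to V^{\otimes n - i + 1}$ and we then need $n - i + 1 = j$ for $\widetilde{\varrho_j}$ to land in $V^{\otimes 1}$). Plugging in the explicit formula: $\widetilde{\varrho_i}(v_1 \otimes \cdots \otimes v_n) = \sum_{\lambda - 1 = 0}^{n-i} (-1)^{|v_1| + \cdots + |v_{\lambda-1}|} \alpha^{i-1}v_1 \otimes \cdots \otimes \alpha^{i-1}v_{\lambda-1} \otimes \varrho_i(v_\lambda \otimes \cdots \otimes v_{\lambda+i-1}) \otimes \alpha^{i-1}v_{\lambda+i} \otimes \cdots \otimes \alpha^{i-1}v_n$ (using $|\varrho_i| = -1$ in the Koszul sign), and then applying $\widetilde{\varrho_j}$ and projecting to $V$. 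Since the output after $\widetilde{\varrho_i}$ is already in $V^{\otimes j}$, the map $\widetilde{\varrho_j}$ acts on it simply as $\varrho_j$ itself (the only nonzero corapping component when $k = j = n$). The crucial point to track is that the terms $\alpha^{i-1}v_\ell$ appearing to the left and right of the $\varrho_i$-slot, together with the relation $\alpha \circ \varrho_i = \varrho_i \circ \alpha^{\otimes i}$, produce exactly the $\alpha^{i-1}$-twisted arguments appearing in equation \eqref{ha-eqn} (in the shifted convention). Summing over all $(i,j)$ with $i + j = n + 1$ and over $\lambda$, we obtain precisely the left-hand side of the displayed equations, so $\pi_1 \circ (D \circ D)|_{V^{\otimes n}} = 0$ is equivalent to the $n$-th relation.

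I expect the main obstacle to be the careful bookkeeping of signs: one must verify that the Koszul signs $(-1)^{|v_1| + \cdots + |v_{\lambda-1}|}$ coming from moving the degree $-1$ map $\varrho_i$ past $v_1, \ldots, v_{\lambda-1}$ (in the formula for $\widetilde{\varrho_i}$), combined with any signs from composing $\widetilde{\varrho_j}$ after $\widetilde{\varrho_i}$, match exactly the signs in the target identity, and that no extra relative sign $(-1)^{(\cdots)(\cdots)}$ intervenes between the $\varrho_n \circ \varrho_m$ and $\varrho_m \circ \varrho_n$ contributions when $i \neq j$ (here the graded commutator versus composite subtlety is moot since $D \circ D = \frac{1}{2}[D,D]$ and we are in characteristic zero, but one still wants the sum over ordered pairs to reproduce all terms without cancellation). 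I would handle this by first treating $n = 1, 2, 3$ by hand to fix the sign conventions and confirm agreement with the $\varrho_1 \circ \varrho_1$, Leibniz, and homotopy-hom-associativity relations already displayed, then give the general pattern by induction on $n$ using the explicit formula, citing the analogous classical computation in \cite{lada-markl, getz-jon} for the sign lemma. The $\alpha$-equivariance conditions require no new work beyond noting they are preserved under composition, which was already used to define $\text{Coder}^\bullet_\alpha(TV)$.
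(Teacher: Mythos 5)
Your proposal is correct and follows essentially the same route as the paper: the paper likewise expands $D \circ D = \big(\sum_{n\geq 1}\widetilde{\varrho_n}\big)\circ\big(\sum_{m\geq 1}\widetilde{\varrho_m}\big)$, groups the terms $\widetilde{\varrho_i}\circ\widetilde{\varrho_j}$ with $i+j=n+1$ constant, and then applies the explicit formula of Lemma \ref{lifting-lemma} to read off the stated identities on $V^{\otimes n}$. The only difference is cosmetic: you make explicit, via the uniqueness part of Lemma \ref{lifting-lemma} and the identification $D\circ D=\tfrac{1}{2}[D,D]$, the reduction to the corestriction $\pi_1\circ D\circ D$, a step the paper's proof leaves implicit when passing from the map-level equations $\sum_{i+j=n+1}\widetilde{\varrho_i}\circ\widetilde{\varrho_j}=0$ to the element-level identities.
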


\begin{proof}
Note that, the condition $D \circ D = 0$ is same as
\begin{center}
$ (\widetilde{\varrho_1} + \widetilde{\varrho_2}+ \widetilde{\varrho_3}+ \cdots ) \circ (\widetilde{\varrho_1} + \widetilde{\varrho_2}+ \widetilde{\varrho_3}+ \cdots ) = 0.$
\end{center}
This is equivalent to the conditions that
\begin{align*}
&\widetilde{\varrho_1} \circ \widetilde{\varrho_1} = 0 ,\\
&\widetilde{\varrho_1} \circ \widetilde{\varrho_2} + \widetilde{\varrho_2} \circ \widetilde{\varrho_1} = 0 , \\
&\cdots\\
&\widetilde{\varrho_1} \circ \widetilde{\varrho_n} + \widetilde{\varrho_2} \circ \widetilde{\varrho_{n-1}} + \cdots + \widetilde{\varrho_{n-1}} \circ \widetilde{\varrho_2} + \widetilde{\varrho_n} \circ \widetilde{\varrho_1} = 0,\\
&\cdots .
\end{align*}
Hence, by applying the definition of $\widetilde{\varrho_i}$, for $i=1, \ldots,$ we get the conclusion.
\end{proof}

In view of Proposition \ref{ha-ha1-prop} and Proposition \ref{coder-coder-zero-equiv}, we have the following result.
\begin{thm}\label{coder-ha-inf}
An	$HA_\infty$-algebra structure on a graded vector space $A$ with respect to a linear map $\alpha : A \rightarrow A$ of degree $0$ is equivalent to an element $D \in Coder^{-1}_\alpha (TV)$ with $D \circ D = 0$, where $V = sA$.
\end{thm}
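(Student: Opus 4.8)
The plan is to obtain the theorem by composing the two equivalences already in place: Proposition \ref{ha-ha1-prop} identifies $HA_\infty$-structures on $A$ with $HA_\infty[1]$-structures on $V = sA$, while Proposition \ref{coder-coder-zero-equiv} identifies the defining equations of an $HA_\infty[1]$-structure on $V$ with the single condition $D \circ D = 0$ for a degree $-1$ coderivation $D$. The one remaining point is that passing from a collection $\{\varrho_n\}_{n\geq 1}$ of degree $-1$ maps $\varrho_n : V^{\otimes n}\to V$ satisfying $\alpha\circ\varrho_n = \varrho_n\circ\alpha^{\otimes n}$ to an element $D = \sum_{n\geq 1}\widetilde{\varrho_n}\in\text{Coder}^{-1}_\alpha(TV)$ is a bijection; this is exactly what Lemma \ref{lifting-lemma} supplies, since each $\widetilde{\varrho_n}$ is determined uniquely by $\varrho_n$, and conversely $\varrho_n$ is recovered from $D$ as the component $V^{\otimes n}\to V$ of $D$ (the terms $\widetilde{\varrho_n}$ are separable because $\widetilde{\varrho_n}$ sends $V^{\otimes k}$ into $V^{\otimes k-n+1}$, so distinct arities land in distinct tensor degrees).

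Concretely, starting from an $HA_\infty$-algebra $(A, \mu_k, \alpha)$, I would set $V = sA$ and $\varrho_k = (-1)^{k(k-1)/2}\, s\circ\mu_k\circ(s^{-1})^{\otimes k}$ as in Proposition \ref{ha-ha1-prop}. These maps have degree $-1$ and satisfy $\alpha\circ\varrho_k = \varrho_k\circ\alpha^{\otimes k}$, so by Lemma \ref{lifting-lemma} they lift to coderivations $\widetilde{\varrho_k}$, and $D := \sum_{k\geq 1}\widetilde{\varrho_k}$ is a well-defined element of $\text{Coder}^{-1}_\alpha(TV)$ (the sum is locally finite since $\widetilde{\varrho_k}$ vanishes on $V^{\otimes m}$ for $m < k$). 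By Proposition \ref{ha-ha1-prop} the $\varrho_k$ satisfy the $HA_\infty[1]$-identities, and by Proposition \ref{coder-coder-zero-equiv} this is equivalent to $D\circ D = 0$; hence $D$ is the desired square-zero coderivation.

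For the converse, given $D \in \text{Coder}^{-1}_\alpha(TV)$ with $D\circ D = 0$, write $D = \sum_{n\geq 1}\widetilde{\varrho_n}$ (as in the definition of $\text{Coder}^{-1}_\alpha(TV)$), read off the uniquely determined components $\varrho_n : V^{\otimes n}\to V$, and note that these are degree $-1$ maps commuting with $\alpha$. Proposition \ref{coder-coder-zero-equiv} turns $D\circ D = 0$ into the $HA_\infty[1]$-equations for $\{\varrho_n\}$, and Proposition \ref{ha-ha1-prop} then transports this structure back to an $HA_\infty$-algebra $(A, \mu_k, \alpha)$ with $\mu_k = s^{-1}\circ\varrho_k\circ s^{\otimes k}$. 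Since the assignments $\{\varrho_n\}\mapsto D$ and $\{\mu_k\}\mapsto\{\varrho_k\}$ are both bijective, the two constructions are mutually inverse.

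I do not expect any computational obstacle here: all the genuine work (the sign-laden verification of the coderivation formula, the uniqueness of the lift, and the matching of the $HA_\infty[1]$-identities with $D\circ D=0$) has been absorbed into Lemma \ref{lifting-lemma} and Propositions \ref{ha-ha1-prop} and \ref{coder-coder-zero-equiv}. The only thing that must be stated carefully is the bookkeeping that makes the correspondence a genuine bijection rather than merely a map in one direction — i.e.\ that every $D$ decomposes uniquely by arity and that the suspension $s$ does not disturb the commutation relation $\alpha\circ\varrho_n = \varrho_n\circ\alpha^{\otimes n}$ — and both are immediate from the cited results.
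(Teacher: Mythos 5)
Your proposal is correct and follows the same route as the paper, which simply combines Proposition \ref{ha-ha1-prop} with Proposition \ref{coder-coder-zero-equiv} (Lemma \ref{lifting-lemma} providing the bijection between collections $\{\varrho_n\}$ and coderivations). Your additional remarks on the arity decomposition and the bijectivity of the correspondence only make explicit what the paper leaves implicit.
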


\subsection{Homotopy transfer theorems} Let $(A, d_A)$ and $(H, d_H)$ be two chain complexes. A contraction from $A$ to $H$ consists of chain maps $p : A \rightarrow H$ and $i : H \rightarrow A$ and a degree $+ 1$ chain homotopy $h : A \rightarrow A$ 
\[
\xymatrix{
h \lefttorightarrow	(A, d_A) \ar@<2pt>[r]^{\quad p} & (H, d_H) \ar@<2pt>[l]^{\quad i}
}
\]
satisfying
\begin{align*}
1_A - i \circ p =~& d_A h + h d_A,\\
p \circ i =~& 1_H.
\end{align*}

Now, suppose $(A, \mu, \alpha, d_A)$ is a differential graded hom-associative algebra. 
We also assume that $\alpha$ commute with the chain homotopy $h$, that is, $\alpha \circ h = h \circ \alpha$.
We aim to prove that the differential graded hom-associative algebra structure on $(A, d_A)$ transfer to an $HA_\infty$-algebra structure on $(H, d_H)$. A similar result for differential graded associative algebras has been proved by Kadeisvili \cite{kad}. We define a multiplication $\mu_2^H : H^{\otimes 2} \rightarrow H$ by 
\begin{center}
$ \mu_2^H (x,y) := p \mu ( i(x), i(y)),~ \text{ for } x, y \in H$
\end{center}
and a degree $0$ linear map $\alpha_H : H \rightarrow H$ by $\alpha_H := p \circ \alpha \circ i$. 
If the map $i$ is an isomorphism with inverse $p$, then the multiplication on $H$ becomes hom-associative.
In general, there is no reason for $\mu_2^H$ to be hom-associative.

The next lemma is straightforward but a tedious calculation.

\begin{lemma}\label{htt-lemma}
We have $\partial (\mu_2^H) = 0$. Moreover, for any $x, y, z \in H$,
\begin{center}
$ \mu_2^H ( \mu_2^H (x,y), \alpha_H (z)  ) - \mu_2^H ( \alpha_H (x), \mu_2^H (y, z)  )  = \partial (\mu_3^H)(x, y, z),$
\end{center}
where $\mu_3^H : H^{\otimes 3} \rightarrow H$ is given by
\begin{center}
$\mu_3^H (x, y, z) =~ p \mu \big( h \mu ( i(x), i(y)) ,~ \alpha i (z) \big) - p \mu \big(  \alpha i (x), ~h \mu ( i(y), i(z))   \big).$
\end{center}
\end{lemma}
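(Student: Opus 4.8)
The plan is to verify both assertions by a direct expansion using the defining properties of the contraction $(p, i, h)$, the hom-associativity of $\mu$, and the compatibility relations $\alpha \circ d_A = d_A \circ \alpha$, $\alpha \circ h = h \circ \alpha$, $d_A \circ \mu = \mu \circ (d_A \otimes \mathrm{id} + \mathrm{id} \otimes d_A)$, and $\alpha \circ \mu = \mu \circ (\alpha \otimes \alpha)$. First I would record the easy part: since $p$ and $i$ are chain maps and $\mu$ is a chain map with respect to $d_A$, the composite $\mu_2^H = p \circ \mu \circ (i \otimes i)$ is a chain map, i.e. $\partial(\mu_2^H) = d_H \circ \mu_2^H - \mu_2^H \circ (d_H \otimes \mathrm{id} + \mathrm{id} \otimes d_H) = 0$; this uses only $p d_A = d_H p$, $d_A i = i d_H$, and the Leibniz rule for $\mu$. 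Note also that $\alpha_H = p \alpha i$ commutes with $d_H$ for the same reason, so $\mu_3^H$ lands in the correct cochain group and $\partial(\mu_3^H)$ makes sense.

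For the main identity, the key input is the homotopy relation $i p = 1_A - (d_A h + h d_A)$, which I would insert into the ``associator gap'' on $H$. Concretely, I would start from
\[
\mu_2^H(\mu_2^H(x,y), \alpha_H(z)) - \mu_2^H(\alpha_H(x), \mu_2^H(y,z))
= p\,\mu\big(i p\, \mu(i x, i y),\, \alpha i z\big) - p\,\mu\big(\alpha i x,\, i p\,\mu(i y, i z)\big),
\]
using $\alpha_H = p \alpha i$, $p i = 1_H$, and the fact that $\alpha$ commutes with $p$ and $i$ up to the contraction data (more precisely, substituting $i\alpha_H(z) = i p \alpha i z$ and then absorbing an $ip$; one must be slightly careful here and it may be cleaner to first write everything in terms of $p$, $i$, $\mu$, $\alpha$, $h$ and only at the end identify the answer). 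Replacing each inner $ip$ by $1_A - d_A h - h d_A$ splits the expression into a ``$1_A$'' term, which by hom-associativity of $\mu$ (twisted correctly by $\alpha$, using $\alpha i = i \alpha_H$ on the relevant slots after another insertion of $ip$) cancels between the two sides, and ``$d_A h + h d_A$'' terms. The $d_A$-pieces get pushed out through $\mu$ and $p$ via the Leibniz rule and $p d_A = d_H p$, reassembling into $d_H$ applied to $\mu_3^H(x,y,z)$; the remaining $h d_A$-pieces, after using $d_A i = i d_H$ and the Leibniz rule on the innermost $\mu(ix,iy)$ and $\mu(iy,iz)$, reassemble into $\mu_3^H$ applied with one $d_H$ inserted in each slot. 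Collecting, the right-hand side becomes exactly $d_H \circ \mu_3^H(x,y,z) - \mu_3^H(d_H x, y, z) \pm \mu_3^H(x, d_H y, z) \pm \mu_3^H(x, y, d_H z) = \partial(\mu_3^H)(x,y,z)$ in the notation of Remark \ref{rem-ha-derivation}.

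The main obstacle is bookkeeping: tracking the Koszul signs as $d_A$ (degree $-1$) and $h$ (degree $+1$) are commuted past elements of $H$ and past $\mu$, and ensuring every $\alpha$ sits on the slot dictated by the hom-associativity axiom, which forces the repeated insertions of $ip = 1_A - d_A h - h d_A$ in just the right places so that the twisting $\alpha i = i\alpha_H$ can be applied. No single step is conceptually deep — it is the classical Kadeishvili argument for the first two homotopies, adapted so that every multiplication carries the correct power of $\alpha$ — but the sign and $\alpha$-placement verification is where all the care is needed, which is why the lemma is (as stated) ``straightforward but tedious.''
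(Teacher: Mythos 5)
Your proposal is correct and is exactly the argument the paper has in mind: the paper offers no written proof (it declares the lemma ``straightforward but tedious,'' following Kadeisvili/Markl), and your plan --- expand the hom-associator of $\mu_2^H$ by inserting $i\circ p = 1_A - d_A h - h d_A$, cancel the $1_A$-terms by hom-associativity of $\mu$, and reassemble the $d_A h$- and $h d_A$-pieces via the Leibniz rule and the chain-map/commutation identities (including $\alpha h = h\alpha$, which justifies $i\alpha_H = \alpha i$) into $\partial(\mu_3^H)$ --- is precisely that standard computation adapted to the $\alpha$-twisted setting. The only caveat is the sign you wrote as $-\mu_3^H(d_H x,y,z)$: under the paper's convention $\partial(\mu_3)=d\circ\mu_3+\mu_3\circ d$ (Remark \ref{rem-ha-derivation} with $n=3$) the inserted-differential terms carry Koszul signs $+,(-1)^{|x|},(-1)^{|x|+|y|}$, a bookkeeping point you already flagged as the place requiring care.
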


The above lemma shows that the obstruction of the hom-associativity of $\mu_2^H$ is given by $\partial (\mu_3^H)$. 
The Remark \ref{rem-ha-derivation} now suggests that $(H, d_H)$ may have an $HA_\infty$-algebra structure.
Like classical case of $A_\infty$-algebras, one can construct maps $\mu_k^H : H^{\otimes k} \rightarrow H$ of degree $k-2$, for $k \geq 3$, satisfying the properties (\ref{ha-derivation-defn}). For $k=3$, it coincides with $\mu_3^H$ defined in the above lemma. Therefore, it will follows that $(H, d_H)$ inherits an $HA_\infty$-algebra structure.

Our method to construct maps $\mu_k^H$, for $k \geq 3$, are similar to the classical case. We follow the method as in \cite{markl}. More precisely, we construct maps $\mu_k^A : A^{\otimes k} \rightarrow A$ of degree $k-2$, for $k \geq 3$, such that
\begin{center}
$\mu_k^H := p \circ \mu_k^A \circ i^{\otimes k}, ~~~~ k \geq 3.$
\end{center}
We define maps $\mu_k^A : A^{\otimes k} \rightarrow A$ by using induction. We set $\mu_2^A = \mu$ and inductively define
$$ \mu_k^A := \sum_{i+j=k, i, j \geq 1}^{} (-1)^{i(j+1)} \mu (\alpha^{j-1} \circ h \circ \mu_i^A \otimes \alpha^{i-1} \circ h \circ \mu_j^A)$$
with the convention that $h \circ \mu_1^A =$ id. We remark that the maps $\mu_k^A$ can also be define in terms of planar binary trees with $k$ leaves, for $k \geq 3$. Note that there is a unique planar binary tree with two leaves, namely,
\begin{tikzpicture}[scale=0.1]
\draw (0,0) -- (0,-2); \draw (0,0) -- (2,2); \draw (0,0) -- (-2,2);
\end{tikzpicture}
.
It correspond to the binary operation $\mu_2^A = \mu$. For any planar binary tree $T$ with $k$ leaves, it is the grafting of two planar binary trees each of which has less than $k$ leaves. Let $T = T_1 \vee T_2$, where $T_1$ is a planar binary tree with $i$ leaves and $T_2$ is a planar binary tree with $j$ leaves with $i+j = k$. Here $\vee$ stands for grafting of trees. For such $T$, we associate a map $\mu_T : A^{\otimes k} \rightarrow A$ by
\begin{center}
$\mu_T  = (-1)^{i(j+1)}~ \mu ( \alpha^{j-1} \circ h \circ \mu_{T_1} \otimes \alpha^{i-1} \circ h \circ \mu_{T_2})$
\end{center}
with the convention that $h \circ \mu_1$= id, where $1$ is the unique planar tree $|$ with $1$ leaf. Finally, define
$$\mu_k^A := \sum_{T \in PBT(k)}^{} ~\mu_T ,$$
where $PBT(k)$ is the set of isomorphism classes of planar binary trees with $k$ leaves.

\medskip

Thus, we get the following $HA_\infty$-algebra version of the homotopy transfer theorem. The proof is similar to Markl \cite{markl} for $A_\infty$-algebras (see also \cite{kad}).

\begin{thm}\label{htt-1}
Let	
\[
\xymatrix{
h \lefttorightarrow	(A, d_A) \ar@<2pt>[r]^{\quad p} & (H, d_H) \ar@<2pt>[l]^{\quad i}
}
\]	
be a contraction data. If $(A, d_A)$ is a differential graded hom-associative algebra, then $(H, d_H)$ inherits a structure of an $HA_\infty$-algebra.
\end{thm}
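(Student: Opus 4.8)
The plan is to follow Markl's transfer argument \cite{markl} for $A_\infty$-algebras, twisting every step by $\alpha$. Put $\mu_1^H:=d_H$, $\mu_2^H:=p\,\mu\,(i\otimes i)$, and $\mu_k^H:=p\circ\mu_k^A\circ i^{\otimes k}$ for $k\geq 3$ with the maps $\mu_k^A$ as in the construction preceding the theorem, and set $\alpha_H:=p\circ\alpha\circ i$. One must then verify that $(H,\{\mu_k^H\},\alpha_H)$ satisfies Definition \ref{ha}, which by Remark \ref{rem-ha-derivation} splits into three tasks: (a) $\alpha_H\circ\mu_k^H=\mu_k^H\circ\alpha_H^{\otimes k}$ for all $k$; (b) $(\mu_1^H)^2=0$ and $\partial(\mu_2^H)=0$; and (c) $\partial(\mu_k^H)=-\sum_{r+s+t=k,\ 2\le s\le k-1}(-1)^{rs+t}\,\mu_{r+1+t}^H\big((\alpha_H^{s-1})^{\otimes r}\otimes\mu_s^H\otimes(\alpha_H^{s-1})^{\otimes t}\big)$ for $k\ge 3$.

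For (a), I would first prove $\alpha\circ\mu_k^A=\mu_k^A\circ\alpha^{\otimes k}$ by induction on $k$: the case $k=2$ is the hom-multiplicativity $\alpha\mu=\mu(\alpha\otimes\alpha)$, and the inductive step feeds into the recursion $\mu_k^A=\sum_{i+j=k}(-1)^{i(j+1)}\mu(\alpha^{j-1}h\,\mu_i^A\otimes\alpha^{i-1}h\,\mu_j^A)$ the identities $\alpha h=h\alpha$ and $\alpha\mu_i^A=\mu_i^A\alpha^{\otimes i}$. Transferring this to $H$ uses that $\alpha$ is compatible with the contraction, namely $\alpha\circ i=i\circ\alpha_H$ and $p\circ\alpha=\alpha_H\circ p$; these follow from $\alpha h=h\alpha$ together with the harmless side conditions $h\circ i=0=p\circ h$, which may always be imposed on a contraction (indeed $(1_A-ip)\alpha i=(d_Ah+hd_A)\alpha i=d_A\alpha(hi)+\alpha(hi)d_H=0$, and dually for $p\alpha$). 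Granting them, $\mu_k^H\circ\alpha_H^{\otimes k}=p\,\mu_k^A(i\alpha_H)^{\otimes k}=p\,\mu_k^A\,\alpha^{\otimes k}i^{\otimes k}=p\,\alpha\,\mu_k^A\,i^{\otimes k}=\alpha_H\circ\mu_k^H$. Part (b) is immediate: $(\mu_1^H)^2=d_H^2=0$, and $\partial(\mu_2^H)=0$ follows by pushing $d_A$ past the chain maps $i$ and $p$ and using that $\mu$ is a chain map.

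Part (c) is the core, and I would obtain it by establishing the corresponding identity on $A$ and then applying $p\,(-)\,i^{\otimes k}$, which commutes with $\partial=[d_A,-]$ since $p,i$ are chain maps. Apply $\partial$ to the recursion for $\mu_k^A$: because $\mu$ and $\alpha$ are chain maps, the only nontrivial graded commutator produced is $[d_A,h]=d_Ah+hd_A=1_A-i\circ p$, occurring at each $h$, i.e. at each internal edge $e$ of the planar binary trees $T$ defining $\mu_k^A$. Hence $\partial(\mu_k^A)$ decomposes, edge by edge, into terms where $h_e$ is replaced by $1_A$ and terms where $h_e$ is replaced by $-\,i\circ p$. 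The ``$h_e\mapsto 1_A$'' terms cancel among themselves: contracting an internal edge merges two $\mu$-vertices into one ternary vertex, the two binary trees contracting to a given such tree produce the two sides of the hom-associativity relation $\mu(\alpha(a),\mu(b,c))=\mu(\mu(a,b),\alpha(c))$ after redistributing $\alpha$'s via $\alpha\mu=\mu(\alpha\otimes\alpha)$, and their signs are opposite — the $\alpha$-exponents $j-1,\ i-1$ in the recursion are chosen exactly so this identification is exact. What survives is the sum of ``$h_e\mapsto -\,i\circ p$'' terms; here the inserted $i\circ p$ severs $T$ into an upper subtree (ranging over all binary trees with $s$ leaves, $2\le s\le k-1$, hence contributing $\mu_s^A$) and a lower subtree (ranging over all binary trees with $k-s+1=r+1+t$ leaves, the upper one plugged into position $r+1$, hence contributing $\mu_{r+1+t}^A$). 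Composing with the outer $p\,(-)\,i^{\otimes k}$: the $p$ of the inserted $i\circ p$ with $i^{\otimes s}$ on the $s$ top leaves forms $p\,\mu_s^A\,i^{\otimes s}=\mu_s^H$, the $i$ feeds the lower tree, and the remaining $\alpha$-powers (after redistribution by hom-multiplicativity, which moves the ``excess'' $\alpha^{s-1}$ along the path of the glued leaf out to the $r+t$ free inputs) become $\alpha_H^{s-1}$ via $\alpha\circ i=i\circ\alpha_H$. Each such term thus collapses to $-(-1)^{rs+t}\mu_{r+1+t}^H((\alpha_H^{s-1})^{\otimes r}\otimes\mu_s^H\otimes(\alpha_H^{s-1})^{\otimes t})$, which is (c); the case $k=3$ is precisely Lemma \ref{htt-lemma}. (Conceptually, the same conclusion follows from the homological perturbation lemma: the contraction induces a contraction of reduced tensor coalgebras $\overline T(sH)\rightleftarrows\overline T(sA)$, the DG hom-associative structure is a square-zero element of $\mathrm{Coder}^{-1}_\alpha(\overline T(sA))$, and perturbation transfers it to a square-zero element of $\mathrm{Coder}^{-1}_{\alpha_H}(\overline T(sH))$, which by Theorem \ref{coder-ha-inf} is an $HA_\infty$-structure; the $\mu_k^H$ above are the unfolding of the perturbed differential.)

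The main obstacle is the bookkeeping inside part (c): checking in full generality that in each cancelling pair of ``$h_e\mapsto 1_A$'' terms the surviving powers of $\alpha$ sit on exactly the arguments demanded by hom-associativity once one is allowed to redistribute via hom-multiplicativity (this is what pins down the exponents in the recursion), that the ``excess'' $\alpha$-powers in the surviving terms reorganize into the uniform $\alpha_H^{s-1}$ of Remark \ref{rem-ha-derivation}, and that the Koszul signs combine with the combinatorial signs $(-1)^{i(j+1)}$ to produce the signs $(-1)^{rs+t}$. This is a lengthy but purely mechanical elaboration of the $n=3$ computation already recorded in Lemma \ref{htt-lemma}, carried out exactly as in Markl \cite{markl} with $\alpha$-twists inserted; beyond it, no new idea is needed.
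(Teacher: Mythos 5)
Your proposal is correct and is essentially the paper's own argument: the paper defines the same transferred data $\alpha_H=p\circ\alpha\circ i$, $\mu_2^H=p\,\mu\,(i\otimes i)$, $\mu_k^H=p\circ\mu_k^A\circ i^{\otimes k}$ with the identical recursion/planar-binary-tree formula for $\mu_k^A$, and then simply defers the verification to Markl's transfer argument (with Lemma \ref{htt-lemma} as the $n=3$ case), which is exactly the $[d_A,h]=1_A-i\circ p$ edge-decomposition you sketch. Your explicit handling of the side conditions $h\circ i=0=p\circ h$ to get $\alpha\circ i=i\circ\alpha_H$ and $p\circ\alpha=\alpha_H\circ p$ is a sensible added detail consistent with the paper's standing assumptions ($\alpha$ commuting with $d_A$ and $h$), not a departure from its method.
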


In the next, we prove that if $(A, d_A)$ inherits a structure of an $HA_\infty$-algebra, this structure also transfers to $(H, d_H)$. A similar for $A_\infty$-algebras has been proved by Kontsevich and Soibelman \cite{kont-soib}. Note that, in Lemma \ref{htt-lemma} we have used the fact that $\mu$ is associative. If $(A, d_A)$ is an $HA_\infty$-algebra, the multiplication $\mu_2$ on $A$ is only associative up to homotopy, that is, there exists a ternary operation $\mu_3$ on $A$ such that
\begin{center}
$\partial (\mu_3) =~  \mu_2 (\mu_2 \otimes \alpha) - \mu_2 (\alpha \otimes \mu_2).$
\end{center}
As before, we construct maps $\mu_k^A : A^{\otimes k} \rightarrow A$ of degree $k-2$, for $k \geq 3$ such that $\mu_k^H := p \circ \mu_k^A \circ i^{\otimes k},$ $k \geq 3.$
Define
$$ \mu_k^A = \sum_{i_1 + \cdots + i_l = k, i_1, \ldots, i_l \geq 1, 2 \leq l \leq k}^{} (-1)^{\theta (i_1, \ldots, i_l)}~ \mu_l (\alpha^{\sum_{p \neq 1} (i_p -1)} \circ h \circ \mu^A_{i_1} \otimes \cdots \otimes \alpha^{\sum_{p \neq l} (i_p -1)} \circ h \circ \mu^A_{i_l}),$$
where $\theta (i_1, \ldots, i_l) = \sum_{1 \leq s < t \leq l} i_s (i_t + 1).$ As before, we have used the convention that $h \circ \mu_1^A =$ id. One may also describe these maps in terms of planar trees (instead of planar binary trees). The proof of the next theorem is similar to Markl \cite{markl}.

\begin{thm}\label{htt-2}
Given a contraction data as in Theorem \ref{htt-1}, if $(A, d_A)$ is an $HA_\infty$-algebra, then $(H, d_H)$ inherits a structure of an $HA_\infty$-algebra.
\end{thm}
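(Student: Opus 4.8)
The plan is to follow the proof of Theorem~\ref{htt-1} line by line, replacing the single associativity relation for $\mu$ by the full hierarchy of $HA_\infty$-relations (\ref{ha-derivation-defn}) for $(A, d_A, \mu_k, \alpha)$. By Remark~\ref{rem-ha-derivation} it is enough to produce on $H$ a degree $0$ map $\alpha_H$ commuting with a family $\{\mu_k^H\}_{k\geq 1}$ of degree $k-2$ maps ($\mu_1^H = d_H$) such that $\partial(\mu_2^H) = 0$ and, for every $n\geq 3$,
\[
\partial(\mu_n^H) \;=\; -\!\!\sum_{\substack{r+s+t=n\\ r,t\geq 0,\ 1<s<n}}\!\! (-1)^{rs+t}\;\mu^H_{r+1+t}\big((\alpha_H^{s-1})^{\otimes r}\otimes\mu_s^H\otimes(\alpha_H^{s-1})^{\otimes t}\big),
\]
$\partial$ being the $d_H$-derivation. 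The candidates are $\mu_2^H(x,y) = p\mu_2(i x, i y)$, $\alpha_H = p\circ\alpha\circ i$, and $\mu_k^H := p\circ\mu_k^A\circ i^{\otimes k}$ for $k\geq 3$, with $\mu_k^A$ the planar-tree operations from the statement. As a preliminary step I would replace the given contraction by one satisfying the side conditions $h\circ i = 0$, $p\circ h = 0$, $h^2 = 0$; this is the usual normalisation, it does not alter $p$, $i$ or the homology, and it preserves $\alpha\circ h = h\circ\alpha$ (the standard normalisation formulas involve only $d_A$, $h$ and their composites). With these in force one has, besides $p i = 1_H$, $p d_A = d_H p$ and $d_A i = i d_H$, also $ip = 1_A - (d_A h + h d_A)$ together with $i\alpha_H = \alpha i$, $p\alpha = \alpha_H p$ and $\alpha_H^{m} = p\alpha^{m}i$ (each of the last three is an immediate consequence of $h i = 0$, $p h = 0$ and $\alpha h = h\alpha$).

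The core is a single identity for the auxiliary operations $\mu_k^A$ on $A$, proved by induction on $k$. Differentiating the defining recursion by the Leibniz rule, the operator $d_A$ lands either on one of the $\mu_l$'s — where one substitutes the $HA_\infty$-relation (\ref{ha-derivation-defn}) valid on $A$, rewriting $\partial(\mu_l)$ through lower operations (and getting $0$ for $l\leq 2$) — or on one of the inserted copies of $h$ — where one substitutes $d_A h = (1_A - ip) - h d_A$ — or on a power $\alpha^{?}$, which passes through everything by $\alpha\mu_l = \mu_l\alpha^{\otimes l}$ and $\alpha h = h\alpha$. Collecting terms, one obtains that each $\mu_k^A$ is $\alpha$-equivariant, $\alpha\circ\mu_k^A = \mu_k^A\circ\alpha^{\otimes k}$, and that
\[
\partial(\mu_k^A) \;+\!\! \sum_{\substack{r+s+t=k\\ 1<s<k}}\!\! (-1)^{rs+t}\;\mu^A_{r+1+t}\big((\alpha^{s-1})^{\otimes r}\otimes\mu_s^A\otimes(\alpha^{s-1})^{\otimes t}\big) \;=\; R_k,
\]
where $R_k$ is a sum of tree-shaped compositions, each carrying, on some internal edge, a factor $1_A - ip$ in place of a bare $h$. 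Now apply $p\circ(-)\circ i^{\otimes k}$. The left-hand term becomes $\partial(\mu_k^H)$ (use $pd_A = d_H p$, $d_A i = i d_H$); in the second sum one inserts $ip = 1_A - (d_A h + h d_A)$ between consecutive factors and uses $i\alpha_H = \alpha i$, $p i = 1_H$, $\alpha_H^m = p\alpha^m i$ to recognise the ``$1_A$''-part as precisely the right-hand side of the $HA_\infty$-relation for $\mu_k^H$, the ``$d_A h + h d_A$''-part producing further trees with a $1_A - ip$ on an edge. The proof finishes by checking that $p R_k i^{\otimes k}$ cancels these extra trees term by term: in each such tree the $1_A$ summand of $1_A - ip$, sandwiched by the $p$ and $i$'s, re-closes a genuine $\mu^H$-tree that is matched with opposite sign, while the $ip$ summand regrows a strictly smaller $\mu^A$-tree handled by the inductive hypothesis — this is exactly the telescoping that forces the signs $\theta(i_1,\ldots,i_l) = \sum_{1\leq s<t\leq l} i_s(i_t+1)$ to be as stated. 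Finally $\alpha$-equivariance of $\mu_k^A$ together with $i\alpha_H = \alpha i$ and $\alpha_H p = p\alpha$ yields $\alpha_H\circ\mu_k^H = \alpha_H p\mu_k^A i^{\otimes k} = p\alpha\mu_k^A i^{\otimes k} = p\mu_k^A\alpha^{\otimes k}i^{\otimes k} = p\mu_k^A i^{\otimes k}\alpha_H^{\otimes k} = \mu_k^H\circ\alpha_H^{\otimes k}$, so $(H, d_H, \mu_k^H, \alpha_H)$ is an $HA_\infty$-algebra, and $\mu_3^H$ agrees with the map of Lemma~\ref{htt-lemma}.

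The step I expect to be genuinely laborious — rather than conceptually hard — is this inductive identity together with the telescoping cancellation of $R_k$: it is the ``straightforward but tedious'' computation already flagged for Lemma~\ref{htt-lemma}, now run for all arities at once and with the $\alpha$-powers carried along. A more conceptual alternative, which I would mention but not pursue, is to work in the coderivation picture of Theorem~\ref{coder-ha-inf}: first show (an $\alpha$-equivariant version of the tensor trick) that the contraction from $(A, d_A)$ to $(H, d_H)$ lifts to a contraction of the tensor coalgebras $TV$ and $TW$ ($V = sA$, $W = sH$) intertwining the coalgebra maps $\alpha$ and $\alpha_H$; then view the $HA_\infty$-structure on $A$ as the square-zero coderivation $\widetilde{\varrho_1}+\sum_{k\geq 2}\widetilde{\varrho_k}$, treat $\sum_{k\geq 2}\widetilde{\varrho_k}$ as a perturbation of the differential $\widetilde{\varrho_1}$, and invoke the homological perturbation lemma. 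The perturbed differential on $TW$ is automatically a square-zero, $\alpha_H$-compatible coderivation, hence an $HA_\infty$-structure on $H$ by Theorem~\ref{coder-ha-inf}, and expanding its components along the tensor-trick homotopy reproduces exactly the tree formulas for $\mu_k^H$. I would present the direct tree computation, as in \cite{markl} (compare \cite{kad, kont-soib}), since the operations $\mu_k^A$ are already written down in the statement.
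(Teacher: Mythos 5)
Your proposal is correct and follows essentially the same route as the paper: the paper also defines the transferred operations by the $\alpha$-twisted tree/recursion formulas $\mu_k^A$, sets $\mu_k^H = p\circ\mu_k^A\circ i^{\otimes k}$ and $\alpha_H = p\circ\alpha\circ i$, and verifies the $HA_\infty$-relations by the same Markl-style induction using $1_A - ip = d_Ah + hd_A$. The extra details you supply (side conditions on the contraction, the telescoping cancellation, and the perturbation-lemma alternative) are consistent elaborations of that same argument rather than a different approach.
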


\section{$2$-term $HA_\infty$-algebras}\label{sec4}
An $n$-term $HA_\infty$-algebra is an $HA_\infty$-algebra $(A, \mu_k, \alpha)$ whose underlying graded vector space $A$ is concentrated in degrees $0$ to $n-1$, namely,
\begin{center}
$A = A_0 \oplus A_1 \oplus \cdots \oplus A_{n-1}.$
\end{center}
In this case, it is easy to see that $\mu_k = 0$, for $k > n+1.$ In the next section (see Theorem \ref{cat-are-equiv}) we show that
$2$-term $HA_\infty$-algebras are related to categorification of hom-associative algebras. Therefore, we prefer to rewrite the explicit description of $2$-term $HA_\infty$-algebras.

\begin{defn}\label{defn-2ha-inf}
	A $2$-term $HA_\infty$-algebra consists of the following data
	\begin{itemize}
		\item a complex of vector spaces $A_1 \xrightarrow{d} A_0$
		\item bilinear maps $\mu_2 : A_i \otimes A_j \rightarrow A_{i+j}$,
		\item a trilinear map $\mu_3 : A_0 \otimes A_0 \otimes A_0 \rightarrow A_1$,
		\item two linear maps $\alpha_0 : A_0 \rightarrow A_0$ and $\alpha_1 : A_1 \rightarrow A_1$ satisfying $\alpha_0 \circ d = d \circ \alpha_1$ and
		\begin{center}
		$ \alpha_1 \circ \mu_3 = \mu_3 \circ \alpha_0^{\otimes 3}$
		\end{center}
	\end{itemize}
	such that for any $a, b, c, d \in A_0$ and $m, n \in A_1$, the following equalities are satisfied:
	\begin{itemize}
		\item[(a)] $\mu_2 (m,n) = 0$,
		\item[(b)] $\alpha_0 \big(  \mu_2 (a,b)    \big) =  \mu_2 \big(   \alpha_0 (a), \alpha_0 (b) \big)$,
		\item[(c)] $\alpha_1  \big(  \mu_2 (a,m) \big) = \mu_2 \big(  \alpha_0 (a), \alpha_1 (m) \big),$
	    \item[(d)] $\alpha_1  \big(  \mu_2 (m,a) \big) = \mu_2  ( \alpha_1 (m), \alpha_0 (a) \big),$
		\item[(e)] $d \mu_2 (a,m) = \mu_2 (a, dm)$,
		\item[(f)] $d \mu_2 (m,a) = \mu_2 (dm, a)$,
		\item[(g)] $\mu_2 (dm, n) = \mu_2 (m, dn)$, 
		\item[(h)] $d\mu_3 (a, b, c) = \mu_2 \big(   \mu_2 (a,b), \alpha_0 (c) \big) - \mu_2 \big(   \alpha_0 (a), \mu_2 (b,c) \big)$,
		\item[(i1)] $\mu_3 (a, b, dm) =  \mu_2 \big(  \mu_2 (a,b), \alpha_1 (m) \big) - \mu_2 \big(  \alpha_0 (a), \mu_2 (b,m) \big),$
		\item[(i2)] 	$\mu_3 (a, dm, c) =  \mu_2 \big(  \mu_2 (a,m), \alpha_0 (c) \big) - \mu_2 \big(  \alpha_0 (a), \mu_2 (m,c) \big) $,
		\item[(i3)]  $\mu_3 (dm, b, c) =  \mu_2 \big(  \mu_2 (m,b), \alpha_0 (c) \big) - \mu_2 \big(  \alpha_1 (m), \mu_2 (b,c) \big) $,
    	\item[(j)] $\mu_3  \big( \mu_2 (a,b), \alpha_0 (c), \alpha_0(d) \big) - \mu_3 \big(  \alpha_0(a), \mu_2 (b,c), \alpha_0 (d) \big) + \mu_3 \big(    \alpha_0 (a), \alpha_0 (b), \mu_2 (c,d) \big) \\
		 = \mu_2 \big(   \mu_3 (a,b,c), \alpha_0^2 (d) \big) + \mu_2 \big(  \alpha_0^2 (a), \mu_3 (b,c,d) \big).$ 
	\end{itemize}
\end{defn}

We denote a $2$-term $HA_\infty$-algebra as above by  $(A_1 \xrightarrow{d} A_0, \mu_2, \mu_3, \alpha_0, \alpha_1)$.
One may also write the explicit description of morphism between $2$-term $HA_\infty$-algebras. 

\begin{defn}
Let $A= (A_1 \xrightarrow{d} A_0, \mu_2, \mu_3, \alpha_0, \alpha_1)$ and $A'= (A'_1 \xrightarrow{d'} A'_0, \mu'_2, \mu'_3, \alpha'_0, \alpha'_1)$ be two $2$-term $HA_\infty$-algebras. A morphism $f : A \rightarrow A'$ consists of
\begin{itemize}
\item a chain map $f : A \rightarrow A'$ (which consists of linear maps $f_0 : A_0 \rightarrow A_0'$ and $f_1 : A_1 \rightarrow A_1'$ with $f_0 \circ d = d' \circ f_1$) satisfying
\begin{center}
$\alpha_0' \circ f_0 = f_0 \circ \alpha_0~~~ \text{ and } ~~~ \alpha_1' \circ f_1 = f_1 \circ \alpha_1,$
\end{center}
 \item a bilinear map $f_2 : A_0 \otimes A_0 \rightarrow A_1'$ satisfying $\alpha_1' \circ f_2 = f_2 \circ \alpha_0^{\otimes 2}$
\end{itemize}
such that for any $a, b, c \in A_0$ and $m \in A_1$, the followings are hold
\begin{itemize}
\item[(a)] $d' f_2 (a,b) = f_0 (\mu_2 (a,b)) - \mu_2' (f_0 (a), f_0 (b)),$
\item[(b)] $f_2 (a, dm) = f_1 (\mu_2 (a,m)) - \mu_2' (f_0 (a), f_1(m)),$
\item[(c)] $f_2 (dm, a) = f_1 (\mu_2 (m, a)) - \mu_2' (f_1 (m), f_0 (a))$,
\item[(d)] $f_2 (\mu_2 (a,b), \alpha_0 (c)) - f_2 ( \alpha_0 (a), \mu_2 (b, c)) - \mu_2' (f_2 (a,b), \alpha_0' f_0 (c)) + \mu_2' (\alpha_0' f_0 (a), f_2 (b,c)) \\
= f_1 (\mu_3 (a,b, c)) - \mu_3' (f_0 (a), f_0 (b), f_0 (c)).$
\end{itemize}
\end{defn}

If $f = (f_0, f_1, f_2) : A \rightarrow A'$ and $g = (g_0, g_1, g_2) : A' \rightarrow A''$ be two morphism of $2$-term $HA_\infty$-algebras, their composition $g \circ f : A \rightarrow A''$ is defined by $(g \circ f)_0 = g_0 \circ f_0, ~ (g \circ f)_1 = g_1 \circ f_1$ and
\begin{center}
$(g \circ f)_2 (a, b) = g_2 (f_0 (a), f_0 (b)) + g_1 (f_2 (a,b)), ~~ \text{ for } a, b \in A_0.$
\end{center}
For any $2$-term $HA_\infty$-algebra $A$, the identity morphism $\text{id}_A : A \rightarrow A$ is given by the identity chain map $A \rightarrow A$ together with $(1_A)_2 = 0$.

\begin{prop}
	The collection of $2$-term $HA_\infty$-algebras and morphisms between them form a category. We denote this category by ${\bf 2HA_\infty}$.
\end{prop}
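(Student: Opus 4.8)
The plan is to verify the three defining properties of a category: associativity of composition, existence of two-sided identities, and closure of composition (i.e.\ that the composite of two morphisms of $2$-term $HA_\infty$-algebras is again such a morphism). The last point is in fact the substantive one; the first two are formal bookkeeping once the composition formula is in hand.

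First I would check closure. Given $f = (f_0, f_1, f_2) : A \to A'$ and $g = (g_0, g_1, g_2) : A' \to A''$, set $(g\circ f)_0 = g_0 f_0$, $(g\circ f)_1 = g_1 f_1$, and $(g\circ f)_2(a,b) = g_2(f_0(a), f_0(b)) + g_1(f_2(a,b))$. One checks immediately that $(g\circ f)_0 \circ d = d'' \circ (g\circ f)_1$ (since $f$ and $g$ are chain maps) and that $(g\circ f)_\bullet$ commutes with the $\alpha$'s and with $\alpha_0^{\otimes 2}$ (using the intertwining relations for both $f$ and $g$). Then one must verify axioms (a)--(d) of the morphism definition for $g \circ f$. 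For (a): expand $d''(g\circ f)_2(a,b) = d'' g_2(f_0 a, f_0 b) + d'' g_1 f_2(a,b)$, apply axiom (a) for $g$ to the first term and the chain-map property $d'' g_1 = g_0 d'$ together with axiom (a) for $f$ to the second, and collect; the cross terms telescope to give $(g\circ f)_0(\mu_2(a,b)) - \mu_2''((g\circ f)_0 a, (g\circ f)_0 b)$. Axioms (b) and (c) are analogous, using $f_1(\mu_2(a,m)) = f_2(a,dm) + \mu_2'(f_0 a, f_1 m)$ (axiom (b) for $f$) and the corresponding relation for $g$, plus $f_2(a, dm)$ feeding into $g_1$. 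Axiom (d) is the one that requires care: one substitutes the formula for $(g\circ f)_2$ into the left-hand side, repeatedly uses axioms (a)--(d) for $f$ to rewrite terms of the form $f_2(\mu_2(-,-), \alpha_0(-))$ and $\mu_2'(f_2(-,-), \alpha_0' f_0(-))$, then invokes axiom (d) for $g$, and checks that everything reassembles into $(g\circ f)_1(\mu_3(a,b,c)) - \mu_3''((g\circ f)_0 a, (g\circ f)_0 b, (g\circ f)_0 c)$.

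Next I would check that $\mathrm{id}_A = (\mathrm{id}_{A_0}, \mathrm{id}_{A_1}, 0)$ is a morphism (axioms (a)--(d) hold trivially since all the difference terms vanish) and that it is a two-sided unit: from the composition formula, $(\mathrm{id}_A \circ f)_2(a,b) = (\mathrm{id}_{A_1})_2(f_0 a, f_0 b) + \mathrm{id}_{A_1}(f_2(a,b)) = 0 + f_2(a,b)$, and $(f \circ \mathrm{id}_A)_2(a,b) = f_2(a,b) + f_1(0) = f_2(a,b)$, while the degree-$0$ and degree-$1$ components are obviously unchanged. Finally, associativity: for a third morphism $h = (h_0,h_1,h_2): A'' \to A'''$, both $((h\circ g)\circ f)_2$ and $(h\circ(g\circ f))_2$ expand, using the composition formula twice, to $h_2(g_0 f_0 a, g_0 f_0 b) + h_1 g_2(f_0 a, f_0 b) + h_1 g_1 f_2(a,b)$; the degree-$0$ and degree-$1$ parts are associative by associativity of ordinary composition.

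The main obstacle is purely computational: verifying morphism-axiom (d) for the composite $g\circ f$. It involves a moderately long sign-tracking manipulation in which one must correctly apply all four of $f$'s axioms (including (d)) and the intertwining identity $\alpha_0' f_0 = f_0 \alpha_0$ to turn the expanded left-hand side into the claimed right-hand side. There are no conceptual difficulties, and since the paper's sign conventions in the $2$-term setting are light (all the maps in Definition~\ref{defn-2ha-inf} are genuine structure maps rather than suspensions), the bookkeeping, while tedious, is routine.
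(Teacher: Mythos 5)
Your proposal is correct and is exactly the routine verification the paper leaves implicit: the paper states this proposition without proof, having just defined the composition formula and identity morphisms, and your check of closure (axioms (a)--(d) for $g\circ f$), units, and associativity is the intended argument. The key computations you sketch (telescoping in axiom (a), the use of $f_0\circ d = d'\circ f_1$ in (b)--(c), and the expansion of both triple composites of $(-)_2$ to the same three-term sum) all go through as you describe.
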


\subsection{Skeletal $2$-term $HA_\infty$-algebras}\label{subsec-skeletal}
\begin{defn}
	A $2$-term $HA_\infty$-algebra $(A_1 \xrightarrow{d} A_0, \mu_2, \mu_3, \alpha_0, \alpha_1)$ is called skeletal if $d = 0$. 
\end{defn}

Let $(A_1 \xrightarrow{0} A_0, \mu_2, \mu_3, \alpha_0, \alpha_1)$ be a skeletal $2$-term $HA_\infty$-algebra. Then it follows from conditions (b) and (h) in Definition 3.6 that $(A_0, \mu_2, \alpha_0)$ forms a hom-associative algebra. Moreover, it follows from (i1), (i2) and (i3) that the maps
\begin{align*}
\mu_2 : A_0 \otimes A_1 \rightarrow A_1, ~~ (a,m) \mapsto \mu_2 (a,m) \\
\mu_2 : A_1 \otimes A_0 \rightarrow A_1, ~~ (m,a) \mapsto \mu_2 (m, a)
\end{align*}
defines a hom-bimodule structure on the vector space $A_1$ with respect to the linear map $\alpha_1 : A_1 \rightarrow A_1$.

\begin{defn}\label{equiv-defn}
	Let $(A_1 \xrightarrow{0} A_0, \mu_2, \mu_3, \alpha_0, \alpha_1)$ and 	$(A_1 \xrightarrow{0} A_0, \mu_2', \mu_3', \alpha_0, \alpha_1)$ be  
two skeletal $2$-term $HA_\infty$-algebras on the same chain complex and maps $\alpha_0, \alpha_1$. They are said to be equivalent if $\mu_2 = \mu_2'$ and there exists a linear map $\sigma : A_0 \otimes A_0 \rightarrow A_1$  with $\alpha_1 (\sigma (a,b)) = \sigma (\alpha_0 (a), \alpha_0 (b))$ satisfying
\begin{align*}
\mu_3' (a,b,c) = \mu_3 (a,b,c) ~+~& \mu_2 (\alpha_0 (a), \sigma (b,c)) -~ \sigma (\mu_2 (a,b), \alpha_0 (c)) \\
&+ \sigma (\alpha_0 (a), \mu_2 (b,c)) - \mu_2 (\sigma (a,b), \alpha_0 (c)), \text{   for all } a, b , c \in A_0.
\end{align*}
\end{defn}

\begin{thm}\label{skeletal-2}
	There is a one-to-one correspondence between skeletal $2$-term $HA_\infty$-algebras and tuples $( (A, \mu, \alpha), M, \beta , \cdot, \theta)$, where $(A, \mu, \alpha)$ is a hom-associative algebra, $(M, \beta, \cdot)$ is a hom-bimodule over $A$ and $\theta$ is a $3$-cocycle of the hom-associative algebra $A$ with coefficients in the hom-bimodule.
	
Moreover, it extends to a one-to-one correspondence between equivalence classes of skeletal $2$-term $HA_\infty$-algebras and the third Hochschild cohomology $H^3_{\alpha, \beta} (A, M)$.
\end{thm}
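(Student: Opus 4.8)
The plan is to prove the first bijection by translating, one by one, the nine axioms of a skeletal $2$-term $HA_\infty$-algebra into the defining conditions of a hom-associative algebra, a hom-bimodule over it, and a $3$-cocycle; and then to prove the ``moreover'' part by recognising the equivalence relation of Definition \ref{equiv-defn} as the relation of being cohomologous in degree three.

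\emph{The correspondence.} Starting from a skeletal $2$-term $HA_\infty$-algebra $(A_1 \xrightarrow{0} A_0, \mu_2, \mu_3, \alpha_0, \alpha_1)$, I set $A = A_0$, $\alpha = \alpha_0$, $\mu = \mu_2|_{A_0 \otimes A_0}$, $M = A_1$, $\beta = \alpha_1$, with left and right actions $a \cdot m = \mu_2(a, m)$ and $m \cdot a = \mu_2(m, a)$, and $\theta = \mu_3$. Because $d = 0$, axiom (b) is precisely the multiplicativity of $\alpha$ and axiom (h) collapses to $\mu_2(\mu_2(a,b), \alpha_0(c)) = \mu_2(\alpha_0(a), \mu_2(b,c))$, so $(A, \mu, \alpha)$ is hom-associative (as the text already observes after the definition of skeletal algebras); axioms (c), (d) are exactly $\beta(a \cdot m) = \alpha(a) \cdot \beta(m)$ and $\beta(m \cdot a) = \beta(m) \cdot \alpha(a)$; axioms (i1), (i2), (i3) turn into the three hom-bimodule identities recorded in Section \ref{sec2}; and axioms (a), (e), (f), (g) are automatic. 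Finally $\theta \in C^3_{\alpha,\beta}(A, M)$ thanks to the imposed relation $\alpha_1 \circ \mu_3 = \mu_3 \circ \alpha_0^{\otimes 3}$, and a term-by-term comparison with the explicit formula for $\delta_{\alpha,\beta}$ shows that axiom (j) is exactly the cocycle equation $\delta_{\alpha,\beta}(\theta) = 0$. Conversely, from a tuple $((A, \mu, \alpha), M, \beta, \cdot, \theta)$ one builds the complex $M \xrightarrow{0} A$ with $\alpha_0 = \alpha$, $\alpha_1 = \beta$, takes $\mu_2$ to be $\mu$ on $A \otimes A$, the two actions on $A \otimes M$ and $M \otimes A$, and zero on $M \otimes M$, and sets $\mu_3 = \theta$; then axioms (a)--(g) and the $\alpha$-compatibilities hold by construction and by the hom-bimodule relations, (h) is hom-associativity, (i1)--(i3) are the hom-bimodule identities, and (j) is $\delta_{\alpha,\beta}(\theta) = 0$. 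The two assignments are visibly mutually inverse, giving the first claim.

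\emph{Equivalence classes.} By Definition \ref{equiv-defn} two skeletal algebras are comparable only when they share the same complex, the same $\alpha_0, \alpha_1$, and the same $\mu_2$; by the first part this is exactly the data of a fixed hom-associative algebra $(A, \mu, \alpha)$ together with a fixed hom-bimodule $(M, \beta, \cdot)$, with $\mu_3$ ranging over the $3$-cocycles in $C^3_{\alpha,\beta}(A, M)$. A linear map $\sigma : A_0 \otimes A_0 \to A_1$ satisfying $\alpha_1 \circ \sigma = \sigma \circ \alpha_0^{\otimes 2}$ is precisely an element of $C^2_{\alpha,\beta}(A, M)$, and evaluating $\delta_{\alpha,\beta}(\sigma)$ from the formula in Section \ref{sec2} produces exactly $\mu_2(\alpha_0(a), \sigma(b,c)) - \sigma(\mu_2(a,b), \alpha_0(c)) + \sigma(\alpha_0(a), \mu_2(b,c)) - \mu_2(\sigma(a,b), \alpha_0(c))$, which is the correction term appearing in Definition \ref{equiv-defn}. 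Hence $\mu_3$ and $\mu_3'$ are equivalent if and only if $\mu_3' - \mu_3 = \delta_{\alpha,\beta}(\sigma)$, i.e.\ they represent the same class in $H^3_{\alpha,\beta}(A, M)$; together with the first part this yields the asserted one-to-one correspondence.

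All these verifications are elementary, and I expect the only genuine difficulty to be the bookkeeping of signs and of powers of $\alpha_0$: one has to check that the alternating signs $(-1)^i$ and the terminal sign $(-1)^{n+1}$ in $\delta_{\alpha,\beta}$, along with the outer twists by $\alpha^{n-1}$, reproduce verbatim the signs and the $\alpha_0^2$-twists in axiom (j) (the case $n = 3$) and in Definition \ref{equiv-defn} (the case $n = 2$).
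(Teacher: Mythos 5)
Your proposal is correct and follows essentially the same route as the paper: identify $(A_0,\mu_2,\alpha_0)$ as the hom-associative algebra and $(A_1,\alpha_1,\mu_2)$ as the hom-bimodule, read axiom (j) as the cocycle equation $\delta_{\alpha,\beta}(\mu_3)=0$, build the inverse assignment in the obvious way, and recognise the correction term in Definition \ref{equiv-defn} as $\delta_{\alpha,\beta}(\sigma)$ for a $2$-cochain $\sigma$, so equivalence is exactly cohomologousness in $H^3_{\alpha,\beta}(A,M)$. Your axiom-by-axiom translation (including (c),(d) for the $\beta$-compatibility and (i1)--(i3) for the three bimodule identities) is just a more explicit writing-out of what the paper states briefly before and inside its proof.
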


\begin{proof}
	Let $(A_1 \xrightarrow{0} A_0, \mu_2, \mu_3, \alpha_0, \alpha_1)$ be a skeletal $2$-term $HA_\infty$-algebra. We observed that $(A_0, \mu_2, \alpha_0)$ is a hom-associative algebra and the maps $\mu_2 : A_0 \otimes A_1 \rightarrow A_1, ~ \mu_2 : A_1 \otimes A_0 \rightarrow A_1$ defines a hom-bimodule structure on the vector space $A_1$ with respect to the linear map $\alpha_1 : A_1 \rightarrow A_1$. It remains to find out a $3$-cocycle of the hom-associative algebra $A_0$ with coefficients in the hom-bimodule $A_1$. We show that
	\begin{center}
	$ \mu_3 : A_0 \otimes A_0 \otimes A_0 \rightarrow A_1$
	\end{center}
	defines a cocycle. Note that condition (j) is same as
	\begin{align*}
	\mu_2 \big( \alpha_0^2 (a), \mu_3 (b, c, d)  \big) - \mu_3 \big(  \mu_2 (a,b), \alpha_0 (c), \alpha_0 (d)    \big) + \mu_3 \big( \alpha_0(a), \mu_2 (b,c), \alpha_0 (d)    \big) \\ - \mu_3 \big(  \alpha_0(a), \alpha_0(b), \mu_2 (c,d) \big) + \mu_2 \big( \mu_3 (a,b,c), \alpha_0^2 (d)    \big) = 0.
	\end{align*}
	This is equivalent to $\delta_{\alpha_0, \alpha_1} (\mu_3) (a, b, c, d) = 0$.
	
	Conversely, given a tuple $((A, \mu, \alpha), M, \beta, \cdot, \theta)$ as in the statement, define $A_0 = A$, $A_1= M$, $\alpha_0 = \alpha$ and $\alpha_1 = \beta$. We define multiplications $\mu_2 : A_i \otimes A_j \rightarrow A_{i+j}$ and $\mu_3 : A_0 \otimes A_0 \otimes A_0 \rightarrow A_{1}$ by
	\begin{align*}
	\mu_2 (a,b) =~& \mu (a,b),\\
	\mu_2 (a,m) =~& a \cdot m,\\
	\mu_2 (m,a) =~& m \cdot a,\\
	\mu_3 (a,b,c) =~& \theta (a,b,c),
	\end{align*}
	for  $a, b, c \in A_0 = A$ and $m \in A_1 = M.$ Then it is easy to verify that $(A_1 \xrightarrow{0} A_0, \mu_2, \mu_3, \alpha_0, \alpha_1)$ is a skeletal $2$-term $HA_\infty$-algebra.
	
	Finally, let $(A_1 \xrightarrow{0} A_0, \mu_2, \mu_3, \alpha_0, \alpha_1)$ and
	$(A_1 \xrightarrow{0} A_0, \mu_2, \mu_3', \alpha_0, \alpha_1)$ be two equivalent skeletal algebras with the equivalence given by $\sigma$ (cf. Definition \ref{equiv-defn}). Then it follows that
	\begin{center}
$ \mu_3' = \mu_3 + \delta_{\alpha_0, \alpha_1} (\sigma)$
\end{center}
as a $3$-cochain in $C^3_{\alpha_0, \alpha_1} (A, M)$. Hence, we have $[\mu_3] = [\mu_3'].$ Conversely, any two representatives of a same cohomology class in $H^3_{\alpha_0, \alpha_1} (A, M)$ gives rise to equivalent skeletal algebras.
\end{proof}

More generally, we can consider $n$-term $HA_\infty$-algebras with the underlying chain complex
\begin{center}
$ A = ( A_{n-1} \xrightarrow{0} 0 \xrightarrow{0} \cdots \xrightarrow{ 0} 0 \xrightarrow{0} A_0 )$
\end{center}
consists of only two non-zero terms $A_0$ and $A_{n-1}$ with the zero differential. The linear map $\alpha : A \rightarrow A$ of degree $0$ consists of two components $\alpha_0 : A_0 \rightarrow A_0$ and $\alpha_{n-1} = A_{n-1} \rightarrow A_{n-1}$. Moreover, the multiplications $\mu_k$'s are non-zero only for $k=2, n+1$. Two such $n$-term $HA_\infty$-algebras
$( A_{n-1} \xrightarrow{0} 0 \xrightarrow{0} \cdots \xrightarrow{ 0} 0 \xrightarrow{0} A_0 , \mu_2, \mu_{n+1}, \alpha_0, \alpha_{n-1})$ and $( A_{n-1} \xrightarrow{0} 0 \xrightarrow{0} \cdots \xrightarrow{ 0} 0 \xrightarrow{0} A_0 , \mu_2', \mu_{n+1}', \alpha_0, \alpha_{n-1})$ on the same chain complex and linear map $\alpha$ are said to be equivalent if $\mu_2 = \mu_2'$ and
there exists a linear map $\sigma : (A_0)^{\otimes n} \rightarrow A_{n-1}$ satisfying $\alpha_{n-1} \circ \sigma = \sigma \circ \alpha_0^{\otimes n}$ and such that
\begin{align*}
\mu_{n+1}' (a_1, \ldots, a_{n+1}) = ~&\mu_{n+1} (a_1, \ldots, a_{n+1}) + \mu_2 ( \alpha_0^{n-1} (a_1), \sigma (a_2, \ldots, a_{n+1})) \\
&+ \sum_{i=1}^{n} (-1)^i ~\sigma \big(\alpha_0 (a_1), \ldots, \alpha_0 (a_{i-1}) , \mu_2 (a_i, a_{i+1}), \alpha_0 (a_{i+2}), \ldots, \alpha_0 (a_{n+1}) \big) \\
&+ (-1)^{n+1} \mu_2 (\sigma (a_1, \ldots, a_{n}), \alpha_0^{n-1} (a_{n+1})),
\end{align*}
for all $a_1, \ldots, a_{n+1} \in A_0.$

We have the following.

\begin{thm}\label{skeletal-n}
	There is a one-to-one correspondence between $n$-term $HA_\infty$-algebras consisting of only two non-zero terms $A_0$ and $A_{n-1}$, with $d= 0$, and tuples $((A, \mu, \alpha), M, \beta, \cdot, \theta)$, where $(A, \mu, \alpha)$ is a hom-associative algebra, $(M, \beta, \cdot)$ is a hom-bimodule over $A$ and $\theta$ is a $(n+1)$-cocycle of $A$ with coefficients in the hom-bimodule.
	
	Moreover, it extends to a one-to-one correspondence between equivalence classes of $n$-term $HA_\infty$-algebras of the above type and the $(n+1)$-th Hochschild cohomology $H^{n+1}_{\alpha, \beta} (A, M)$.
\end{thm}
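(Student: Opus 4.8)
The plan is to follow the blueprint of the proof of Theorem \ref{skeletal-2}, generalizing each step from the degree-pair $(0,1)$ to the degree-pair $(0,n-1)$. The underlying principle is that the $HA_\infty$-identity \eqref{ha-eqn} (equivalently \eqref{some-ha-identity}) decomposes, according to the degrees of the inputs, into a finite list of component equations; for a graded vector space concentrated in degrees $0$ and $n-1$ with zero differential these equations organize themselves into exactly the structure of a hom-associative algebra on $A_0$, a hom-bimodule on $A_{n-1}$, and a cocycle condition on $\mu_{n+1}$.

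First I would unwind which operations can be nonzero. Since $\deg(\mu_k) = k-2$ and $A$ lives in degrees $0$ and $n-1$, a nonzero value of $\mu_k$ on homogeneous inputs forces the total input degree minus $(k-2)$ to land in $\{0, n-1\}$; together with $d=\mu_1=0$ this leaves only $\mu_2$ (degree-preserving, hence $\mu_2 : A_0\otimes A_0 \to A_0$, $\mu_2: A_0\otimes A_{n-1}\to A_{n-1}$, $\mu_2: A_{n-1}\otimes A_0 \to A_{n-1}$, and $\mu_2$ vanishing on inputs whose degrees sum to $\geq 2n-2 > n-1$ when $n\geq 2$) and $\mu_{n+1}: A_0^{\otimes(n+1)} \to A_{n-1}$ (the unique way to raise degree $0$ by $n-1$). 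Next I would read off the $HA_\infty$-identities \eqref{ha-eqn} in the relevant arities. For $n'=3,\dots,n$ the inputs must be balanced so that exactly the associativity-type relations for $\mu_2$ appear with no room for $\mu_{n+1}$, giving that $(A_0,\mu_2,\alpha_0)$ is hom-associative and that $\mu_2$ makes $A_{n-1}$ a hom-bimodule (one relation for each of the three placements of the $A_{n-1}$-slot, exactly as conditions (h), (i1), (i2), (i3) specialize in the $n=2$ case). For $n'=n+1$ the identity \eqref{ha-eqn} involves $\mu_2\circ\mu_{n+1}$ and $\mu_{n+1}\circ\mu_2$ with the appropriate $\alpha$-twists and signs, and comparing with the differential $\delta_{\alpha,\beta}$ written in Section \ref{subsec-two-one} shows this is precisely $\delta_{\alpha,\beta}(\mu_{n+1})=0$, i.e.\ $\mu_{n+1} =: \theta$ is an $(n+1)$-cocycle. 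Conversely, given $((A,\mu,\alpha),M,\beta,\cdot,\theta)$ one sets $A_0=A$, $A_{n-1}=M$, $\alpha_0=\alpha$, $\alpha_{n-1}=\beta$, $\mu_2=\mu$ together with the two actions, $\mu_{n+1}=\theta$, and checks that all of the above identities hold — which they do, since they were exactly the hom-associativity, hom-bimodule, and cocycle axioms.

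For the ``moreover'' part I would compare the equivalence relation on these $n$-term algebras with cohomologous cocycles. An equivalence is given by $\sigma:(A_0)^{\otimes n}\to A_{n-1}$ with $\alpha_{n-1}\circ\sigma = \sigma\circ\alpha_0^{\otimes n}$ (so $\sigma\in C^n_{\alpha,\beta}(A,M)$), and the displayed formula relating $\mu_{n+1}'$ to $\mu_{n+1}$ is, term by term with the signs $(-1)^i$ and the $\alpha^{n-1}$-twists on the outer factors, literally $\mu_{n+1}' = \mu_{n+1} + \delta_{\alpha,\beta}(\sigma)$. Hence equivalence classes correspond bijectively to cohomology classes in $H^{n+1}_{\alpha,\beta}(A,M)$, and conversely two representatives of a class differ by $\delta_{\alpha,\beta}(\sigma)$ and thus give equivalent algebras.

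The main obstacle is purely bookkeeping: correctly extracting the arity-$(n+1)$ instance of \eqref{ha-eqn} and matching its signs and $\alpha$-powers against those in the definition of $\delta_{\alpha,\beta}$ and in the definition of the equivalence relation. One must verify that the sign $(-1)^{\lambda(i+1)+i(\cdots)}$ appearing in \eqref{ha-eqn} (with $i\in\{2,n\}$ and $j\in\{2,n\}$ constrained by $i+j=n+2$), after the degree shift and restriction to degree-$0$ inputs, reduces to the Hochschild sign $(-1)^i$ in $\delta_{\alpha,\beta}$, and that the $\alpha^{i-1}$-twists match $\alpha^{n-1}$ on the flanking arguments; this is the same computation that underlies Theorem \ref{skeletal-2} for $n=2$, so it presents no conceptual difficulty, only care with indices. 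I would therefore treat it exactly as in the proof of Theorem \ref{skeletal-2}, indicating that the general case follows by the identical argument with $1$ replaced by $n-1$ throughout.
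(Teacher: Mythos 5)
Your proposal follows essentially the same route as the paper's proof: a degree count shows only $\mu_2$ and $\mu_{n+1}$ survive, the lower identities give the hom-associative structure on $A_0$ and the hom-bimodule structure on $A_{n-1}$, the identity pairing $\mu_2$ with $\mu_{n+1}$ is exactly $\delta_{\alpha,\beta}(\mu_{n+1})=0$, and the equivalence relation is literally $\mu_{n+1}'=\mu_{n+1}+\delta_{\alpha,\beta}(\sigma)$. The only slip is an off-by-one in the bookkeeping: the cocycle condition comes from the instance of (\ref{ha-eqn}) on $n+2$ inputs, i.e.\ $i+j=n+3$ with $(i,j)\in\{(2,n+1),(n+1,2)\}$, not from arity $n+1$ with $i+j=n+2$ (which, for $n>2$, is vacuous since $\mu_n=0$).
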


\begin{proof}
Let $( A_{n-1} \xrightarrow{0} 0 \xrightarrow{0} \cdots \xrightarrow{ 0} 0 \xrightarrow{0} A_0 , \mu_2, \mu_{n+1}, \alpha_0, \alpha_{n-1})$ be a $n$-term $HA_\infty$-algebra of the given type.
It follows from the definition of $HA_\infty$-algebra that $A = (A_0, \mu_2, \alpha_0)$ is a hom-associative algebra. Moreover, the maps $\mu_2 : A_0 \otimes A_{n-1} \rightarrow A_{n-1}$ and $\mu_2 : A_{n-1} \otimes A_0 \rightarrow A_{n-1}$ defines a hom-bimodule structure on the vector space $M= A_{n-1}$ with respect to the linear map $\beta = \alpha_{n-1} : M \rightarrow M$. Next, we claim that the map
\begin{center}
	$\mu_{n+1} : (A_0)^{\otimes n+1} \rightarrow A_{n-1}$
\end{center}
	defines a $(n+1)$-cocycle. Note that the condition (\ref{ha-eqn}) in the definition of $HA_\infty$-algebra implies that
	\begin{align*}
	\sum_{i+j = n+3}^{} \sum_{\lambda =1}^{j} (-1)^{\lambda (i+1) + i (|a_1| + \cdots + |a_{\lambda -1 }|)} ~ \mu_{j} \big(  \alpha_0^{i-1}a_1, \ldots, \alpha_0^{i-1} a_{\lambda - 1}, \mu_i ( a_{\lambda}, \ldots, a_{\lambda + i-1}), \ldots, \alpha_0^{i-1} a_{n+2}   \big) = 0,
	\end{align*}
	for all $a_1, \ldots, a_{n+2} \in A_0$. 
The non-zero terms in the above summation occurs for $(i=2,~ j = n+1)$ and $(i = n+1,~ j = 2)$. Therefore, we get
	\begin{align*}
	\sum_{\lambda = 1}^{n+1} & (-1)^{3 \lambda} ~\mu_{n+1}  \big( \alpha_0(a_1), \ldots, \alpha_0(a_{\lambda -1}) , \mu_2 (a_\lambda , a_{\lambda +1} ), \alpha_0 (a_{\lambda +2}), \ldots, \alpha_0 (a_{n+2})   \big) \\
	&+ (-1)^{n+2} \mu_2  \big(    \mu_{n+1} (a_1, \ldots, a_{n+1}), \alpha_0^n (a_{n+2}) \big ) + \mu_2 \big( \alpha_0^n (a_1), \mu_{n+1} (a_2, \ldots, a_{n+2})   \big) = 0,
	\end{align*}
	for all $a_1, \ldots, a_{n+2} \in A_0$.
	This is equivalent to the fact that $\mu_{n+1} : (A_0)^{\otimes n+1} \rightarrow A_{n-1}$ defines a $(n+1)$-cocycle of the hom-associative algebra $(A_0, \mu_2, \alpha_0)$ with coefficients in the hom-bimodule $M=A_{n-1}$.
	
	The converse part is similar to Theorem \ref{skeletal-2}.
	
	The correspondence between the equivalence classes of $n$-term $HA_\infty$-algebras of given type and the $(n+1)$-th Hochschild cohomology is also similar to the last part of Theorem \ref{skeletal-2}.
\end{proof}

\subsection{Strict $2$-term $HA_\infty$-algebras}\label{subsec-strict}

\begin{defn}
	A $2$-term $HA_\infty$-algebra $(A_1 \xrightarrow{d} A_0, \mu_2, \mu_3, \alpha_0, \alpha_1)$ is called strict if $\mu_3 = 0$.
\end{defn}

\begin{exam}\label{exam-strict}
Let $(A, \mu, \alpha)$ be a hom-associative algebra. Take $A_0 = A_1 = A$, $d =~$id,~$\mu_2 = \mu$ and $\alpha_0 = \alpha_1 = \alpha$. Then $(A_1 \xrightarrow{d} A_0, \mu_2, \mu_3 = 0, \alpha_0, \alpha_1)$ is a strict $2$-term $HA_\infty$-algebra.
\end{exam}

Now we introduce crossed module of hom-associative algebras and show that strict $2$-term $HA_\infty$-algebras are in one-to-one correspondence with crossed module of hom-associative algebras.
\begin{defn}
	A crossed module of hom-associative algebras consist of a tuple $((A, \mu_A, \alpha_A), (B, \mu_B, \alpha_B), dt, \phi)$ where $(A, \mu_A, \alpha_A)$ and $(B, \mu_B, \alpha_B)$ are hom-associative algebras, $dt : A \rightarrow B$ is a hom-associative algebra morphism and
	\begin{align*}
	\phi : B \otimes A \rightarrow A,& ~ (b, m) \mapsto \phi (b ,m), \\
	\phi : A \otimes B \rightarrow A,& ~ (m, b) \mapsto \phi(m, b)
	\end{align*}
	defines a hom-bimodule structure on $A$ with respect to the linear map $\alpha_A$, such that for $ m, n \in A,~ b \in B$,
	\begin{align*}
	dt ( \phi (b, m)) =~& \mu_B (b, dt(m)),\\
	dt ( \phi(m, b)) = ~& \mu_B (dt (m), b  ), \\
	\phi ({dt(m)} , n) =~& \mu_A (m, n),\\
	\phi(m, dt(n)) =~& \mu_A (m, n),\\
	\phi (  \alpha_B (b), \mu_A (m, n)) =~& \mu_A (\phi(b, m), \alpha_A (n)),\\
	\phi (  \mu_A (m,n), \alpha_B (b)) =~& \mu_A (\alpha_A (m), \phi(n,b)).
	\end{align*}
\end{defn}

\begin{thm}\label{strict-crossed-mod}
	There is a one-to-one correspondence between strict $2$-term $HA_\infty$-algebras and crossed module of hom-associative algebras.
\end{thm}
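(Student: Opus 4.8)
The plan is to set up an explicit dictionary between the two notions and then check, term by term, that the axioms of Definition~\ref{defn-2ha-inf} with $\mu_3=0$ match exactly the axioms of a crossed module of hom-associative algebras.

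Starting from a strict $2$-term $HA_\infty$-algebra $(A_1\xrightarrow{d}A_0,\mu_2,\mu_3=0,\alpha_0,\alpha_1)$, I would put $B:=A_0$, $A:=A_1$, $\alpha_B:=\alpha_0$, $\alpha_A:=\alpha_1$, $dt:=d$, take $\mu_B$ to be the restriction of $\mu_2$ to $A_0\otimes A_0$, and let $\phi$ be the two mixed components $\mu_2\colon A_0\otimes A_1\to A_1$ and $\mu_2\colon A_1\otimes A_0\to A_1$. Since a multiplication on $A$ is not part of the $2$-term data, I define $\mu_A(m,n):=\mu_2(dm,n)$; by condition~(g) this also equals $\mu_2(m,dn)$, so $\mu_A$ is well defined. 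Then: (b) and (h) (with $\mu_3=0$, hence $d\mu_3=0$) say $(B,\mu_B,\alpha_B)$ is hom-associative; (c) and (d) give the $\alpha$-compatibility of the two actions; and (i1), (i2), (i3) with $\mu_3=0$ are literally the three hom-bimodule identities, so $(A,\alpha_A,\phi)$ is a hom-bimodule over $B$. For the crossed-module relations, $dt(\phi(b,m))=\mu_B(b,dt\,m)$ and $dt(\phi(m,b))=\mu_B(dt\,m,b)$ are (e) and (f); $\phi(dt\,m,n)=\mu_A(m,n)=\phi(m,dt\,n)$ is the definition of $\mu_A$ together with (g); the two Peiffer-type identities $\phi(\alpha_B(b),\mu_A(m,n))=\mu_A(\phi(b,m),\alpha_A(n))$ and $\phi(\mu_A(m,n),\alpha_B(b))=\mu_A(\alpha_A(m),\phi(n,b))$ follow after rewriting $\mu_A$ by means of (e)/(f) and then invoking (i1) and (i3) respectively; $\alpha_B\circ d=d\circ\alpha_A$ is already part of Definition~\ref{defn-2ha-inf}; and $d\,\mu_A(m,n)=\mu_B(dm,dn)$ follows from (e), so $dt$ is a morphism of hom-associative algebras. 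Hom-associativity of $\mu_A$ itself is then forced by the bimodule axioms and the morphism property (equivalently it can be read off from (i1)/(i3) directly). Thus we obtain a crossed module.

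Conversely, I would run the construction backwards: from $((A,\mu_A,\alpha_A),(B,\mu_B,\alpha_B),dt,\phi)$ set $A_0:=B$, $A_1:=A$, $\alpha_0:=\alpha_B$, $\alpha_1:=\alpha_A$, $d:=dt$, $\mu_3:=0$, and let $\mu_2$ be $\mu_B$ on $A_0\otimes A_0$, the action $\phi$ on the two mixed factors, and $0$ on $A_1\otimes A_1$. Then (a) holds by construction; (b)/(h) is hom-associativity of $\mu_B$; (c)/(d) is the $\alpha$-compatibility of $\phi$; (e)/(f) are the first two crossed-module relations; (g) holds since both sides equal $\mu_A(m,n)$; (i1)/(i2)/(i3) are the three hom-bimodule axioms; and both (j) and $\alpha_1\circ\mu_3=\mu_3\circ\alpha_0^{\otimes3}$ hold trivially because every term contains the vanishing $\mu_3$. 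The two assignments are visibly mutually inverse; the only point worth flagging is that going one way $\mu_A$ is reconstructed from $\mu_2$ and $d$ through (g), and going back it reappears via the crossed-module relation $\phi(dt\,m,n)=\mu_A(m,n)$.

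The proof is bookkeeping rather than a single hard step; the main thing to get right is the observation that once $\mu_3=0$, the ``obstruction'' conditions (i1)--(i3) and (j)---whose left-hand sides involve $\mu_3$ on arguments of the special shapes $(\,\cdot\,,\,\cdot\,,dm)$, $(\,\cdot\,,dm,\,\cdot\,)$, $(dm,\,\cdot\,,\,\cdot\,)$---collapse to genuine identities among $\mu_2$, $d$ and $\alpha$ valid for all arguments, and that these are precisely, and no more than, the hom-bimodule and Peiffer identities of a crossed module. One should also verify the well-definedness of $\mu_A$ forced by (g) and the automatic multiplicativity $\alpha_A\circ\mu_A=\mu_A\circ\alpha_A^{\otimes2}$, which follows from (c) (equivalently (d)) and $\alpha_0\circ d=d\circ\alpha_1$.
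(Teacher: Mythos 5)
Your proposal is correct and follows essentially the same route as the paper: the identical dictionary $B=A_0$, $A=A_1$, $dt=d$, $\phi=$ mixed components of $\mu_2$, $\mu_A(m,n)=\mu_2(dm,n)=\mu_2(m,dn)$ via (g), with (b)/(h) giving hom-associativity of $B$, (i1)--(i3) giving the hom-bimodule structure and (with (e)/(f)) the crossed-module and Peiffer relations, and the trivial converse construction. The only cosmetic difference is that the paper checks hom-associativity of $\mu_A$ directly from (e) and (i1) rather than deducing it from the already-established crossed-module identities, and your parenthetical notes that alternative as well.
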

\begin{proof}
	Let $(A_1 \xrightarrow{d} A_0, \mu_2 , \mu_3 = 0, \alpha_0, \alpha_1)$ be a strict  $2$-term $HA_\infty$-algebra. Take $B = A_0$ with the multiplication $\mu_B = \mu_2 : A_0 \otimes A_0 \rightarrow A_0$ and the linear transformation $\alpha_B = \alpha_0$. It follows from (b) and (h) that $(B, \mu_B, \alpha_B)$ is a hom-associative algebra. Take $A = A_1$ with the multiplication $\mu_A$ is given by $\mu_A (m,n) := \mu_2 (dm, n) = \mu_2 (m, dn)$, for $m, n \in A_1$, and the linear transformation $\alpha_A = \alpha_1$. By condition (c) we have
	\begin{center}
	$\alpha_A (   \mu_A (m,n)) = \alpha_1 ( \mu_2 (dm,n)) =~ \mu_2 \big( \alpha_0 (dm), \alpha_1 (n) \big)
	=~ \mu_2 \big( d (\alpha_1m) , \alpha_1(n) \big) 
	=~ \mu_A \big( \alpha_1 (m), \alpha_1 (n)  \big),$
	\end{center}
	which implies that $\alpha_A$ is an algebra morphism with respect to $\mu_A$. Moreover,
	\begin{align*}
	\mu_A & (  \alpha_A (m), \mu_A (n,p)  ) - \mu_A (   \mu_A (m,n), \alpha_A (p)) \\
	=~& \mu_2 (   d\alpha_1 (m), \mu_2 (dn, p) ) - \mu_2 ( d \mu_2 (dm,n) , \alpha_1 (p)  ) \\
	=~& \mu_2 (   \alpha_0 (dm), \mu_2 (dn,p) )  -  \mu_2 (  \mu_2 (dm, dn), \alpha_1 (p)) \quad(\text{by  (e)}) \\
	=~& 0 \quad (\text{by  (i1)} ).
	\end{align*}
	This shows that $(A, \mu_A, \alpha_A)$ is a hom-associative algebra.
	
	Take $dt = d : A_1 \rightarrow A_0$. It commute with the linear transformations. Moreover, we have
	\begin{center}
	$d ( \mu_A (m,n) ) = d ( \mu_2 (dm,n)) = \mu_2 (dm,dn) = \mu_B (dm, dn), ~~\text{ for } m, n \in A_1 = A.$
	\end{center}
	Hence, $dt$ is a morphism of hom-associative algebras. Finally, we define
	\begin{align*}
	\phi : B \otimes A \rightarrow A, ~ (b, m) & \mapsto  \phi (b,m) = \mu_2 (b, m), \\
	\phi : A \otimes B \rightarrow A, ~ (m, b) & \mapsto \phi(m,b) = \mu_2 (m , b),  \text{ for } b \in B, m \in A.
	\end{align*}
	By using (i1), (i2) and (i3), it is easy to see that $\phi$ defines a hom-bimodule structure on $A$ with respect to the linear map $\alpha_A$. Moreover,
	\begin{center}
$	dt (  \phi (b, m)) = d ( \mu_2 (b,m)) = \mu_2 (b, dm),$    ~~~  \text{   ~~~(by (e))}
\end{center}
\begin{center}
$	dt (  \phi (m, b)) = d (\mu_2 (m,b)) = \mu_2 (dm, b),$   ~~~  \text{   ~~~(by (f))}
\end{center}
\begin{center}
$	\phi ({dt (m)}, n) = \mu_2 (  dm, n) = \mu_A (m,n),$
	\end{center}
	\begin{center}
$	\phi ({m}, dt(n)) = \mu_2 (m, dn) = \mu_A (m,n),$
	\end{center}
\begin{align*}
	 \phi ( \alpha_B (a), \mu_A (m,n)) = \mu_2 (  \alpha_0 (a), \mu_2 (dm, n) ) = \mu_2 ( \mu_2 (a,dm), \alpha_1 (n)) = \mu_A (\phi (a,m), \alpha_A (n) ), \\  ~~~ \text { ~~~~  (by (i1) and (e))}
\end{align*}
and
\begin{align*}
\phi ( \phi_A (m,n) , \alpha_B (a) ) = \mu_2 (   \mu_2 (m,dn), \alpha_0 (a) ) =   \mu_2 ( \alpha_1 (m),  \mu_2 (dn, a) ) = \mu_A (\alpha_A (m), \phi (n,a)), \\ ~~~~ \text{  (by (i3) and (f))}.
\end{align*}
	Therefore, $(A, B, dt, \phi)$ is a crossed module of hom-associative algebras.
	
\medskip
	
Conversely, given a crossed module of hom-associative algebras $((A, \mu_A, \alpha_A), (B, \mu_B, \alpha_B), dt, \phi)$, we can construct a strict $2$-term $HA_\infty$-algebra as follows. Take $A_1 = A$, $A_0 = B$ and $dt : A \rightarrow B$ is the chain map $d: A_1 \rightarrow A_0$. The structure map $\mu_2 : A_i \otimes A_j \rightarrow A_{i+j}$ is given by
	\begin{center}
	$\mu_2 (b, b') = \mu_B (b, b'), ~~ \mu_2 (b, m) = \phi (b, m),~~ \mu_2 (m, b) = \phi (m, b) \text{~~ and ~~} \mu_2 (m,n) = 0,$
	\end{center}
	for $b, b' \in A_0 = B,$ and $m, n \in A_1 = A$. The homomorphisms $\alpha_1 : A_1 \rightarrow A_1$ and $\alpha_0 : A_0 \rightarrow A_0$ are given by $\alpha_1 = \alpha_A$ and $\alpha_0 = \alpha_B$. It is easy to verify that these structures gives rise to a strict $2$-term $HA_\infty$-algebra structure $(A_1 \xrightarrow{d} A_0, \mu_2, \mu_3 = 0, \alpha_0, \alpha_1)$.
\end{proof}

Note that the crossed module corresponding to the strict $2$-term $HA_\infty$-algebra of Example \ref{exam-strict} is given by $((A, \mu, \alpha), (A, \mu, \alpha), \text{id}, \mu)$.

\section{Hom-associative $2$-algebras}\label{sec5}

Let {\bf Vect} denote the category of vector spaces over $\mathbb{R}$. A $2$-vector space is a category internal to the category {\bf Vect}. Thus, a $2$-vector space $C$ is a category with a vector space of objects $C_0$ and a vector space of arrows (morphisms) $C_1$, such that all the structure maps are linear. Let $s, t : C_1 \rightarrow C_0$ be the source and target map respectively, $i : C_0 \rightarrow C_1$ be the identity assigning map. A morphism of $2$-vector spaces is a functor internal to the category {\bf Vect}. More precisely, a morphism (linear functor) $F : C \rightarrow C'$ between $2$-vector spaces consist of linear maps $F_0 : C_0 \rightarrow C_0'$ and $F_1 : C_1 \rightarrow C_1'$ commute with structure maps. We denote the category of $2$-vector spaces by {\bf $2$-Vect}. Given a $2$-vector space $C = (C_1 \rightrightarrows C_0)$, we have a $2$-term complex
\begin{align}\label{2vect-2term}
\text{ker}~(s) \xrightarrow{t} C_0.
\end{align}
A morphism $F : C \rightarrow C'$ of $2$-vector spaces induce a morphism between corresponding $2$-term chain complexes.
Conversely, any $2$-term complex $A_1 \xrightarrow{d} A_0$ gives rise to a $2$-vector space
\begin{align}\label{2term-2vect}
\mathbb{V} = (A_0 \oplus A_1 \rightrightarrows A_0)
\end{align}
 of which the set of objects is $A_0$ and the set of morphisms is $A_0 \oplus A_1$. The structure maps are given by $s (a,m) = a$, $t(a,m) = a + dm$ and $i(a)= (a, 0)$. A morphism between $2$-term chain complexes induce a morphism between corresponding $2$-vector spaces.

 We denote the category of $2$-term complexes of real vector spaces by {\bf $2$-Term}. Then there is an equivalence of categories
\begin{center}
{\bf $2$-Term}  $\simeq$ ~{\bf $2$-Vect}. 
\end{center}
See \cite{baez-crans} for more details. In \cite{baez-crans} the authors also introduced Lie $2$-algebras as categorification of Lie algebras. They showed that the category of $2$-term $L_\infty$-algebras and the category
of Lie $2$-algebras are equivalent. In \cite{sheng-chen} the authors proved a similar result in the context of hom-Lie algebras. More precisely, they introduced $HL_\infty$-algebras as well as hom-Lie $2$-algebras and showed that the category of $2$-term $HL_\infty$-algebras and the category of hom-Lie $2$-algebras are equivalent.

In the previous section, we have studied $HA_\infty$-algebra structures on a $2$-term complex. In this section, we introduce hom-associative $2$-algebra structures on a $2$-vector space. Finally, we show that the category of $2$-term $HA_\infty$-algebras and the category of hom-associative $2$-algebras are equivalent.

\begin{defn}\label{hom-ass-2-alg-defn}
A hom-associative $2$-algebra is a $2$-vector space $C$ equipped with
\begin{itemize}
	\item a bilinear functor $\mu : C \otimes C \rightarrow C$,
	\item a linear functor $\Phi = (\Phi_0, \Phi_1) : C \rightarrow C$ satisfying
	\begin{center}
	$ \Phi (  \mu (\xi, \eta)  ) = \mu ( \Phi(\xi), \Phi(\eta ) ), ~ \text{ for all } \xi, \eta \in C,$
	\end{center}
	\item a trilinear natural isomorphism, called the hom-associator
	\begin{center}
	$\mathcal{A}_{\xi, \eta, \zeta} : \mu ( \mu(\xi, \eta), \Phi (\zeta)  ) \rightarrow   \mu   (   \Phi (\xi), \mu (\eta, \zeta)) $
	\end{center}
\end{itemize}
satisfying
\begin{center}
$\mathcal{A}_{\Phi_0 (\xi), \Phi_0 (\eta), \Phi_0 (\zeta)} = \Phi_1 \mathcal{A}_{\xi, \eta, \zeta}$
\end{center}
and such that the following identity is satisfied
\[
\xymatrixrowsep{0.5in}
\xymatrixcolsep{0.1in}
\xymatrix{
	&	\mu \big(~ \mu ( \mu(\xi , \eta) , \Phi_0 (\zeta) ) ,~ \Phi_0^2 (\lambda) \big) \ar[ld]_{\mathcal{A}_{\xi, \eta, \zeta}} \ar[rd]^{\mathcal{A}_{\mu(\xi, \eta), \Phi_0 (\zeta), \Phi_0 (\lambda)}} &  \\
\mu \big(~ \mu (\Phi_0 (\xi) , \mu (\eta , \zeta)) ,~ \Phi_0^2 (\lambda) \big) \ar[d]_{\mathcal{A}_{\Phi_0(\xi), \mu (\eta, \zeta), \Phi_0 (\lambda)}} & & \mu \big(~	\Phi_0 (\mu(\xi , \eta)) , ~ \mu  (\Phi_0 (\zeta) , \Phi_0 (\lambda) ) ~\big) \ar[d]^{\text{id}} \\
\mu \big(   \Phi_0^2 (\xi) , ~\mu (  \mu (\eta , \zeta) , \Phi_0 (\lambda)) ~\big) \ar[rd]_{\mathcal{A}_{\eta, \zeta, \lambda}} & & \mu \big(~ \mu (\Phi_0 (\xi) , \Phi_0 (\eta)) , ~  \Phi_0 (\mu(\zeta , \lambda))~ \big) \ar[ld]^{~~\mathcal{A}_{\Phi_0 (\xi), \Phi_0 (\eta), \mu(\zeta, \lambda)}} \\
 & \mu \big( \Phi_0^2 (\xi) , ~ \mu ( \Phi_0 (\eta) , \mu (\zeta , \lambda) ) ~\big) &  \\
}
\]
\end{defn}

\medskip

A hom-associative $2$-algebra as above may be denoted by $(C, \mu, \Phi, \mathcal{A})$. A hom-associative $2$-algebra is called strict if the hom-associator is the identity isomorphism.
\begin{defn}
	Let $(C, \mu, \Phi, \mathcal{A})$ and $(C', \mu', \Phi', \mathcal{A}')$ be two hom-associative $2$-algebra. A hom-associative $2$-algebra morphism consists of
	\begin{itemize}
		\item a linear functor $(F_0, F_1)$ from the underlying $2$-vector space $C$ to $C'$ such that
\begin{center}	
	$\Phi' \circ (F_0, F_1) = (F_0, F_1) \circ \Phi,$
\end{center}
		\item a bilinear natural transformation
		\begin{center}
		$ F_2 (\xi, \eta) : \mu' (F_0 (\xi), F_0 (\eta))  \rightarrow F_0 ( \mu (\xi, \eta))$
		\end{center}
	\end{itemize}
		satisfying $F_2 ( \Phi_0 (\xi), \Phi_0 (\eta)) = \Phi_1' (F_2 (\xi, \eta))$ and such that the following diagram commute
\[
\xymatrixrowsep{0.5in}
\xymatrixcolsep{0.01in}
\xymatrix{
 &	\mu' \big(~ \mu' ( F_0 (\xi) , F_0 (\eta) ) , ~ \Phi' (F_0 (\zeta))~ \big) \ar[rd]^{F_2 (\xi, \eta)} \ar[ld]_{\mathcal{A}'_{F_0 (\xi), F_0 (\eta), F_0 (\zeta)}} &  \\
\mu' \big(~  \Phi' (F_0 (\xi)) , ~\mu' (   F_0(\eta) , F_0 (\zeta))~ \big) \ar[d]_{F_2 (\eta, \zeta)} & &  \mu' \big(~ F_0 (\mu(\xi , \eta)) ,~ F_0 (\Phi(\zeta)) \big)  \ar[d]^{F_2 \big(\mu(\xi, \eta), \Phi (\zeta) \big)} \\
\mu' \big( ~ F_0 (\Phi(\xi)) ,~ F_0 (\mu(\eta , \zeta)) ~\big) \ar[rd]_{F_2 \big(\Phi (\xi), \mu (\eta, \zeta) \big)} & &  F_0 \big(~ \mu( \mu (\xi , \eta) , \Phi (\zeta) ) ~\big) \ar[ld]^{\mathcal{A}_{\xi, \eta, \zeta}} \\
  & F_0 \big(~ \mu ( \Phi (\xi) , \mu(\eta , \zeta) ) ~\big) & \\
}
\]
\end{defn}

\medskip

The composition of two hom-associative $2$-algebra morphisms is again a hom-associative $2$-algebra morphism. More precisely, let $C, C'$ and $C''$ be three hom-associative $2$-algebras and $F : C \rightarrow C'$, $G: C' \rightarrow C''$ be hom-associative $2$-algebra morphisms. Their composition $G \circ F : C \rightarrow C''$ is a hom-associative $2$-algebra morphism whose components are given by
$ (G \circ F)_0 = G_0 \circ F_0 , ~ (G \circ F)_1 = G_1 \circ F_1$
and $(G \circ F)_2$ is given by
\[
\xymatrix{
\mu'' ( G_0 \circ F_0 (\xi), G_0 \circ F_0 (\eta))  \ar[rd]_{G_2 (F_0 (\xi), F_0 (\eta) )} \ar[rr]^{(G \circ F)_2 (\xi, \eta)} & & (G_0 \circ F_0) (\mu (\xi, \eta)) \\
& G_0 ( ~ \mu'   ( F_0 (\xi) , F_0 (\eta) ) ~ ) \ar[ru]_{G_0 (F_2 (\xi, \eta))} &
}
\]

For any hom-associative $2$-algebra $C$, the identity morphism $\text{id}_C : C \rightarrow C$ is given by the identity functor as its linear functor together with the identity natural transformation as $(\text{id}_C)_2$.

\medskip

Therefore, we obtain the following result.
\begin{prop}
Hom-associative $2$-algebras and hom-associative $2$-algebra morphisms form a category. We denote this category by {\bf HAss2}.
\end{prop}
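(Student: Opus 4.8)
The plan is to verify the three defining properties of a category: that composites of hom-associative $2$-algebra morphisms are again such morphisms, that composition is associative, and that the identity morphisms act as two-sided units. Since the underlying data of a morphism consists of a linear functor together with the natural transformation $F_2$, and since $2$-vector spaces with linear functors already form a category (namely ${\bf 2\text{-}Vect}$), the functorial part of every axiom is automatic; the real work lies in tracking the natural-transformation components.

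First I would check that the composite $G \circ F$ described just before the proposition is genuinely a hom-associative $2$-algebra morphism. Its components $(G \circ F)_0 = G_0 \circ F_0$ and $(G \circ F)_1 = G_1 \circ F_1$ clearly commute with $\Phi$, and $(G \circ F)_2$ is the pasting of $G_2(F_0(\xi), F_0(\eta))$ with $G_0(F_2(\xi, \eta))$. The condition $(G \circ F)_2(\Phi_0 \xi, \Phi_0 \eta) = \Phi_1'' \big( (G \circ F)_2(\xi, \eta) \big)$ follows by combining the two hypotheses $G_2(\Phi_0 \xi, \Phi_0 \eta) = \Phi_1''(G_2(\xi, \eta))$ and $F_2(\Phi_0 \xi, \Phi_0 \eta) = \Phi_1'(F_2(\xi, \eta))$ with $\Phi' \circ (F_0, F_1) = (F_0, F_1) \circ \Phi$ and the functoriality of $G$. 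The coherence hexagon for $G \circ F$ is then obtained by stacking the coherence hexagon for $F$ on top of the coherence hexagon for $G$ (the latter whiskered by $G_0$) and using naturality of $G_2$ to commute the relevant $2$-cells past one another. This diagram chase is the main obstacle, but it is the standard bicategorical computation and produces no surprises.

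Next I would verify associativity of composition. For a composable triple $C \xrightarrow{F} C' \xrightarrow{G} C'' \xrightarrow{K} C'''$, the object- and morphism-level components of $(K \circ G) \circ F$ and $K \circ (G \circ F)$ agree because composition of linear maps is associative; for the $2$-cell component one expands both $((K \circ G) \circ F)_2$ and $(K \circ (G \circ F))_2$ as iterated pastings of $K_2$, $G_2$, $F_2$ and the functorial images $K_0 G_2$, $K_0 G_0 F_2$, and checks equality using naturality of $K_2$ together with functoriality of $K$ and $G$. Finally, that $\text{id}_C$ is a two-sided unit is immediate: its functor part is the identity functor and $(\text{id}_C)_2$ is the identity natural transformation, so pasting with it on either side leaves $F_2$ unchanged.

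I expect the coherence-hexagon compatibility of the composite to be the only step requiring genuine care; associativity and the unit laws reduce to elementary pasting-diagram bookkeeping once the conventions for composing the $F_2$-components are fixed.
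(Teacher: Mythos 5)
Your proposal is correct and follows the same route as the paper: the paper simply defines the composite $(G\circ F)_0=G_0\circ F_0$, $(G\circ F)_1=G_1\circ F_1$, the pasted $2$-cell $(G\circ F)_2$, and the identity morphism, and then asserts the category axioms, which is exactly the routine verification you spell out. Your extra attention to the coherence hexagon and the $\Phi$-compatibility of $(G\circ F)_2$ just makes explicit what the paper leaves implicit.
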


We now prove the main result of this section.

\begin{thm}\label{cat-are-equiv}
	The categories ${\bf 2HA\infty}$ and {\bf HAss2} are equivalent.
\end{thm}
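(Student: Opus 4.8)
The plan is to establish the equivalence by building explicit functors $\mathbb{S}\colon{\bf 2HA_\infty}\to{\bf HAss2}$ and $\mathbb{T}\colon{\bf HAss2}\to{\bf 2HA_\infty}$ that lie over the equivalence {\bf $2$-Term}$\simeq${\bf $2$-Vect} of \cite{baez-crans} recalled above, and then to check that they are mutually quasi-inverse. The functor $\mathbb{T}$ will be the identity on underlying data up to a canonical identification, whereas $\mathbb{S}$ will be an inverse only up to the canonical $2$-vector space equivalence; this is why one gets an equivalence of categories rather than an isomorphism.

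First I would define $\mathbb{S}$. Given a $2$-term $HA_\infty$-algebra $(A_1\xrightarrow{d}A_0,\mu_2,\mu_3,\alpha_0,\alpha_1)$, let $C=\mathbb{V}$ be the associated $2$-vector space $(A_0\oplus A_1\rightrightarrows A_0)$ of \eqref{2term-2vect}, with $s(a,m)=a$, $t(a,m)=a+dm$, $i(a)=(a,0)$. Define the bilinear functor $\mu\colon C\otimes C\to C$ on objects by $\mu(a,b)=\mu_2(a,b)$ and on morphisms by $\mu\bigl((a,m),(b,n)\bigr)=\bigl(\mu_2(a,b),\,\mu_2(m,b)+\mu_2(a,n)+\mu_2(m,dn)\bigr)$; bilinearity, functoriality and compatibility with $s,t,i$ follow from axioms (a)--(g). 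Put $\Phi_0=\alpha_0$, $\Phi_1(a,m)=(\alpha_0(a),\alpha_1(m))$; compatibility with the structure maps is $\alpha_0 d=d\alpha_1$ and $\Phi(\mu(\xi,\eta))=\mu(\Phi\xi,\Phi\eta)$ is (b)--(d). By (h) we have $\mu_2(\mu_2(a,b),\alpha_0 c)-\mu_2(\alpha_0 a,\mu_2(b,c))=d\mu_3(a,b,c)$, so $\mathcal{A}_{a,b,c}:=\bigl(\mu_2(\mu_2(a,b),\alpha_0 c),\,-\mu_3(a,b,c)\bigr)$ is a well-defined morphism with the source and target demanded in Definition~\ref{hom-ass-2-alg-defn}, and it is invertible since $\mathbb{V}$ is a groupoid. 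Naturality of $\mathcal{A}$ in each of its three slots is exactly (i1), (i2), (i3); the equivariance $\mathcal{A}_{\Phi_0\xi,\Phi_0\eta,\Phi_0\zeta}=\Phi_1\mathcal{A}_{\xi,\eta,\zeta}$ is $\alpha_1\circ\mu_3=\mu_3\circ\alpha_0^{\otimes 3}$; and, after cancelling the common source/target data, the large coherence hexagon of Definition~\ref{hom-ass-2-alg-defn} becomes axiom (j). On morphisms, $(f_0,f_1,f_2)$ is sent to the linear functor $F_0=f_0$, $F_1(a,m)=(f_0 a,f_1 m)$ together with the natural transformation $F_2$ encoding $f_2$, and the four conditions on $f$ translate into the two conditions and the coherence square of a ${\bf HAss2}$-morphism.

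Next I would define $\mathbb{T}$. Given $(C,\mu,\Phi,\mathcal{A})$, set $A_0=C_0$, $A_1=\text{ker}(s)$, $d=t|_{A_1}$ as in \eqref{2vect-2term}, $\alpha_0=\Phi_0$, $\alpha_1=\Phi_1|_{\text{ker}(s)}$ (well defined because $s\Phi_1=\Phi_0 s$). Using the linear splitting $C_1\cong C_0\oplus\text{ker}(s)$, $\phi\mapsto\bigl(s\phi,\,\phi-i(s\phi)\bigr)$, take $\mu_2=\mu$ on $A_0^{\otimes 2}$ (object part), $\mu_2(a,m)=\mu(i(a),m)$ and $\mu_2(m,a)=\mu(m,i(a))$ on the mixed terms (which land in $\text{ker}(s)$), $\mu_2(m,n)=0$, and let $\mu_3(a,b,c)$ be minus the $\text{ker}(s)$-component of $\mathcal{A}_{a,b,c}$. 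Reading off the source and target of $\mathcal{A}_{a,b,c}$ gives (h); its naturality, equivariance and the coherence hexagon give (i1)--(i3), the equivariance of $\mu_3$, and (j); and (a)--(g) come from functoriality of $\mu$ together with the structure-map relations. Morphisms are handled analogously, extracting $(f_0,f_1,f_2)$ from $(F_0,F_1,F_2)$. To finish, I would verify that $\mathbb{T}\circ\mathbb{S}$ is the identity up to the canonical identification $\{0\}\oplus A_1\cong A_1$ attached to $\mathbb{V}$, while $\mathbb{S}\circ\mathbb{T}$ sends $(C,\mu,\Phi,\mathcal{A})$ to the ${\bf HAss2}$-object built on $\mathbb{V}\bigl(\text{ker}(s)\xrightarrow{t}C_0\bigr)$, which is carried back to $C$ by the canonical {\bf $2$-Term}$\simeq${\bf $2$-Vect} equivalence; one checks that this $2$-vector space equivalence intertwines $\mu$, $\Phi$ and $\mathcal{A}$, so it promotes to a natural isomorphism $\mathbb{S}\circ\mathbb{T}\cong\mathrm{id}$ in ${\bf HAss2}$, and $\mathbb{S}$, $\mathbb{T}$ are quasi-inverse equivalences.

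The step I expect to be the main obstacle is the identification of the large coherence hexagon of Definition~\ref{hom-ass-2-alg-defn} with axiom (j): one has to expand each of its arrows (every $\mathcal{A}$-arrow being a pair $(\text{source},\ \mp\mu_3(\cdots))$, possibly after applying $\mu$ to an identity), use the $\Phi_0$-equivariance of $\mathcal{A}$ and of $\mu_2$ to line up the powers $\alpha_0^{k}$ sitting on the various tensor slots, and then compare the $A_1$-components of the two composites around the hexagon; their coincidence is precisely identity (j). This is the hom-twisted analogue of the Baez--Crans computation matching the pentagon with the $L_\infty$ Jacobiator, and keeping track of the powers of $\alpha_0$ together with the Koszul signs is where essentially all the effort goes; once it is carried out, the functoriality of $\mu$, the morphism dictionaries, and the two natural isomorphisms are routine verifications.
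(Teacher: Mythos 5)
Your proposal is correct and takes essentially the same approach as the paper: both directions of the equivalence are constructed over the {\bf $2$-Term} $\simeq$ {\bf $2$-Vect} correspondence, with the same formulas for $\mu$, $\Phi$ and the hom-associator, the same extraction of $(\mu_2,\mu_3,\alpha_0,\alpha_1)$ from $(C,\mu,\Phi,\mathcal{A})$ via $\ker(s)$ and $i$, and the same pair of natural isomorphisms exhibiting the functors as quasi-inverse. (Your sign $-\mu_3(a,b,c)$ in $\mathcal{A}_{a,b,c}$ is in fact the choice consistent with axiom (h) and the stated direction of the hom-associator, whereas the paper writes $+\mu_3(a,b,c)$; this is only a harmless normalization discrepancy.)
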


\begin{proof}
First we construct a functor $T : {\bf 2HA_\infty}  \rightarrow {\bf HAss2}$ as follows. Given a $2$-term $HA_\infty$-algebra $A = ( A_1 \xrightarrow{d} A_0, \mu_2, \mu_3, \alpha_0, \alpha_1)$, we have the $2$-vector space
$C = (A_0 \oplus A_1 \rightrightarrows A_0)$ as defined by (\ref{2term-2vect}). Define a bilinear functor $\mu : C \otimes C \rightarrow C$ by
\begin{center}
$ \mu \big(   (a,m), (b,n)  \big)  = \big(  \mu_2 (a,b), \mu_2 (a,n) + \mu_2 (m,b) + \mu_2 (dm,n)  \big),$
\end{center}
for $(a,m), (b,n) \in C_1 = A_0 \oplus A_1$. The linear functor $\Phi : C \rightarrow C$ is given by $\Phi = (\Phi_0, \Phi_1) := (\alpha_0, \alpha_1)$.
Moreover, We have
\begin{align*}
\Phi \big(  \mu  (   (a,m), (b,n)  ) \big) =~& \big(    \alpha_0 \mu_2 (a,b) ,~ \alpha_1   (\mu_2 (a,n) + \mu_2 (m,b) + \mu_2 (dm,n))            \big) \\
=~& \big(    \mu_2 (\alpha_0 a, \alpha_0 b), ~ \mu_2 (\alpha_0 a, \alpha_1 n) + \mu_2 (\alpha_1 m, \alpha_0 b)  + \mu_2 ( d \alpha_0 m, \alpha_1 n)     \big) \\
=~& \mu \big( (\alpha_0 a, \alpha_1 m) , (\alpha_0 b, \alpha_1 n)   \big) = \mu \big( \Phi (a,m) , \Phi (b,n)   \big).
\end{align*}
Define the hom-associator by
\begin{center}
$\mathcal{A}_{a,b,c} = \big( \mu (\mu(a,b), \alpha_0 (c)) ,~ \mu_3 (a,b,c)    \big).$
\end{center}
Note that
\begin{align*}
\mathcal{A}_{\Phi_0 (a), \Phi_0 (b), \Phi_0 (c)} =~&  \big( \mu (\mu( \alpha_0 a, \alpha_0 b), \alpha^2_0 (c)) ,~ \mu_3 (\alpha_0 a, \alpha_0 b, \alpha_0 c)    \big) \\
=~&  \big( \alpha_0 \mu (\mu(a,b), \alpha_0 (c)) ,~ \alpha_1 \mu_3 (a,b,c)    \big) 
= \Phi_1 \mathcal{A}_{a,b,c}.
\end{align*}
By using identities (a)-(j), one can also verify that the diagram in the Definition \ref{hom-ass-2-alg-defn} commutes.
Therefore, $(C, \mu, \mathcal{A}, \Phi)$ is a hom-associative $2$-algebra.

For any $HA_\infty$-morphism $f = (f_0, f_1, f_2)$ from $A$ to $A'$, we define a morphism $F$ from $C = T(A)$ to $C' = T(A')$ as follows.  Take $F_0 = f_0$, $F_1 = f_1$ and
\begin{center}
$ F_2 (a,b) = \big(  \mu' ( f_0 (a), f_0 (b) ) ,~ f_2 (a,b)   \big).$
\end{center}
It is easy to verify that $F$ is a morphism from $C$ to $C'$. Moreover, one can verify that $T$ preserves the identity morphisms and the composition of morphisms. Therefore, $T$ is a functor from ${\bf 2HA_\infty}$ to ${\bf HAss2}$.

In the next, we construct a functor $S : {\bf HAss2} \rightarrow {\bf 2HA_\infty}$ as follows. Given a hom-associative $2$-algebra $C = (C_1 \rightrightarrows C_0, \mu, \Phi, \mathcal{A})$, we have the $2$-term chain complex
\begin{center}
$ A_1 = \text{ker } s \xrightarrow{d=t|_{\text{ker} s}} C_0 = A_0$
\end{center}
as defined by (\ref{2vect-2term}). Define $\mu_2 : A_i \otimes A_j \rightarrow A_{i+j}$ by
$$ \mu_2 (a,b) = \mu (a,b), ~~~ \mu_2 (a,m) = \mu (i(a), m), ~~~ \mu_2 (m, a) = \mu (m, i(a)) ~~~\text{ and } ~~~ \mu_2 (m,n) = 0.$$
The map $\mu_3 : A_0 \otimes A_0 \otimes A_0 \rightarrow A_1$ is defined by
\begin{center}
$ \mu_3 (a,b,c) = \mathcal{A}_{a,b,c} - i (    s (\mathcal{A}_{a,b,c})), ~~ \text{ for } a, b, c \in A_0.$
\end{center}
Define the homomorphism $\alpha_0$ and $\alpha_1$ by $\alpha_0 = \Phi_0 : A_0 (= L_0) \rightarrow A_0$ and $\alpha_1 = \Phi_1|_{A_1 = \text{ker } s } : V_1 \rightarrow V_1$. Since $\Phi$ is a functor, we have $\alpha_0 \circ d = d \circ \alpha_1$. Note that, since $\mathcal{A}_{\Phi_0 (a), \Phi_0 (b), \Phi_0 (c)} = \Phi_1 \mathcal{A}_{a,b,c}$, we have $\alpha_1 \mu_3 (a,b,c) = \mu_3 (\alpha_0 (a), \alpha_0 (b), \alpha_0 (c))$. Then it is easy to verify that the tuple $(A_1 \xrightarrow{d} A_0, \mu_2, \mu_3 , \alpha_0 , \alpha_1)$ defines a $2$-term $HA_\infty$-algebra.

For any hom-associative $2$-algebra morphism $F = (F_0, F_1, F_2) : C \rightarrow C'$, we define a $HA_\infty$-morphism $S(F) = f = (f_0, f_1, f_2)$ from $S(C)$ to $S(C')$ as follows. Take $f_0 = F_0$, $f_1 = F_1 |_{V_1 = \text{ker } s}$ and define $f_2$ by
$$ f_2 (a,b) = F_2 (a,b) - i (  s F_2 (a,b)).$$
It is easy to verify that $f$ is a $HA_\infty$-morphism. Moreover, $S$ preserves the identity morphism and the composition of morphisms. Therefore, $S$ is a functor from ${\bf HAss2}$ to ${\bf 2HA_\infty}$.

It remains to prove that the composite functor $T \circ S$ is naturally isomorphic to the identity functor $1_{{\bf HAss2}}$, and the composite $S \circ T$ is naturally isomorphic to $1_{{\bf 2HA_\infty}}$. Given a hom-associative $2$-algebra $C = (C, \mu, \Phi, \mathcal{A})$, the corresponding $2$-term $HA_\infty$-algebra $S(C)$ is given by
$$ (A_1 = \text{ker } s \xrightarrow{d = t|_{ker~ s}} A_0, \mu_2, \mu_3, \alpha_0, \alpha_1).$$
Therefore, the hom-associative $2$-algebra $T (S(C)) := (C', \mu', \Phi', \mathcal{A}')$ is given by $C' := (A_0 \oplus A_1 \rightrightarrows A_0)$. Define $\theta : T \circ S \rightarrow 1_{{\bf HAss2}}$ by
$$\theta_C : T \circ S (C) = C' \rightarrow C = 1_{{\bf HAss2}} (C)$$
with $(\theta_C)_0 (a)= a$, $(\theta_C)_1 (a,m)= i(a) + m$. It is obvious that $\theta_C$ is an isomorphism of $2$-vector spaces. 

Moreover, we have
\begin{align*}
\theta_C \big(   \mu' ((a,m), (b,n)) \big) =~&  \theta_C \big(  \mu_2 (a,b),~ \mu_2 (a,n) + \mu_2 (m,b) + \mu_2 (dm,n) \big) \\
=~& \theta_C \big( \mu (a,b) ,~ \mu ( i(a), n) +  \mu (m, i(b) ) + \mu (i (dm) , n )      \big) \\
=~& i ( \mu(a,b)  ) +  \mu ( i(a), n) +  \mu (m, i(b) ) + \mu (i(dm), n )\\
=~& \mu (  i(a), i(b))+  \mu ( i(a), n) +  \mu (m, i(b) ) + \mu (m,n) \\
=~& \mu ( i(a) + m,~ i(b) + n ) \\
=~& \mu \big( \theta_C (a,m),~ \theta_C (b,n) \big) .
\end{align*}
This shows that $\theta_C$ is a hom-associative $2$-algebra morphism. Thus, $\theta_C$ is an isomorphism of hom-associative $2$-algebras. It is easy to see that it is a natural isomorphism.

For any $2$-term $HA_\infty$-algebra $A = (A_1 \xrightarrow{d} A_0, \mu_2, \mu_3, \alpha_0, \alpha_1)$, the corresponding hom-associative $2$-algebra $T(A)$ is given by 
$$T(A) = (A_0 \oplus A_1 \rightrightarrows A_0, \mu, \Phi, \mathcal{A}).$$
If we apply the functor $S$ to $T(A)$, we get back the same $2$-term $HA_\infty$-algebra $A$. Therefore, the natural isomorphism $\vartheta : S \circ T \rightarrow 1_{{\bf 2HA_\infty}}$ is given by the identity. Hence, the proof.
\end{proof}

The above theorem can be strengthen in the world of $2$-category. 
More precisely, one can define a notion of $2$-morphism between two morphisms in ${\bf 2HA_\infty}$. In fact, the whole data makes ${\bf 2HA_\infty}$ into a strict $2$-category. Similarly, one can make {\bf HAss2} into a strict $2$-category. Finally, one can prove that the $2$-categories ${\bf 2HA_\infty}$ and {\bf HAss2} are $2$-equivalent. A similar equivalence between the $2$-categories of `$2$-term $L_\infty$-algebras' and `Lie $2$-algebras' has been proved in \cite{baez-crans}.

\section{Cohomology and deformations of $HA_\infty$-algebras}\label{sec6}
In this section, we define Hochschild cohomology of $HA_\infty$-algebras. This cohomology controls the formal deformation of the structure.
\subsection{Cohomology}
 We fix an $HA_\infty$-algebra $A = (A, \mu_k, \alpha)$.
\begin{defn}
 A left $HA_\infty$-module over $A$ consists of a graded vector space $M$ together with
\begin{itemize}
	\item a collection $\{ \eta_k |~ 1 \leq k < \infty \}$ of multilinear maps $\eta_k : \otimes^{k-1} A \otimes M \rightarrow M$ with deg $(\eta_k) = k-2 ,$ 
	\item a linear map $\beta : M \rightarrow M$ of degree $0$ with
	$$ \beta \big( \eta_k (a_1, \ldots, a_{k-1}, m) \big) = \eta_k \big(  \alpha(a_1), \ldots, \alpha(a_{k-1}), \beta (m) \big)$$
	\end{itemize}
	such that for all $n \geq 1$,
	\begin{align}\label{module-rel}
	\sum_{i+j = n+1}^{} \sum_{\lambda = 1}^{j} (-1)^{ \lambda (i+1) + i (|a_1| + \cdots + |a_{\lambda -1}|)} ~ \eta_{j} \big(  \gamma^{i-1}a_1, \ldots, \gamma^{i-1} a_{\lambda -1}, \eta_i ( a_{\lambda}, \ldots, a_{\lambda + i -1}),\\
	\gamma^{i-1} a_{\lambda + i}, \ldots, \gamma^{i-1} a_n   \big) = 0, \nonumber
	\end{align}
	for $a_1, \ldots, a_{n-1} \in A$ and $a_n \in M$.
\end{defn}

Note that in the above definition we use the following conventions. If $\lambda < j$, that is, $\lambda + i -1 < n$,
\begin{align*}
\eta_{j} \big(  \gamma^{i-1} & a_1, \ldots, \gamma^{i-1} a_{\lambda-1}, \eta_i ( a_{\lambda}, \ldots, a_{\lambda + i-1}), \gamma^{i-1} a_{\lambda + i}, \ldots, \gamma^{i-1} a_n   \big) \\
~&= \eta_{j} \big(  \alpha^{i-1}a_1, \ldots, \alpha^{i-1} a_{\lambda-1}, \mu_i ( a_{\lambda}, \ldots, a_{\lambda + i-1}), \alpha^{i-1} a_{\lambda + i}, \ldots, \beta^{i-1} a_n   \big).
\end{align*}
If $\lambda = j$, that is, $a_{\lambda + i-1} = a_n$, we use
\begin{align*}
\eta_{j} \big(  \gamma^{i-1} a_1 &, \ldots, \gamma^{i-1} a_{\lambda-1}, \eta_i ( a_{\lambda}, \ldots, a_{\lambda + i-1}), \gamma^{i-1} a_{\lambda + i}, \ldots, \gamma^{i-1} a_n   \big) \\
~& = \eta_j \big( \alpha^{i-1} a_1, \ldots, \alpha^{i-1} a_{j-1}, \eta_i (a_j, \ldots, a_n ) \big).
\end{align*}

It is easy to observe that $M = A$ is a left $HA_\infty$-module over itself with $\eta_k = \mu_k$ and $\beta = \alpha$. One can also define a right $HA_\infty$-module. The notion of a $HA_\infty$-bimodule is also not hard to define.
For a graded vector space $M$, we denote
$$ J^k_{A, M} = \oplus_{i+j = k-1} \big[ A^{\otimes i} \otimes M \otimes A^{\otimes j} \big].$$

\begin{defn}
An $HA_\infty$-bimodule over $A$ consists of a graded vector space $M$ together with
\begin{itemize}
\item a collection $\{ \eta_k | 1 \leq k < \infty \}$ of multilinear maps $\eta_k : J^k_{A, M} \rightarrow  M$ with deg $(\eta_k) = k-2,$ 
\item a linear map $\beta : M \rightarrow M$ of degree $0$ satisfying
$$ \beta ( \eta_k (a_1, \ldots, a_k) ) = \eta_k ( \gamma (a_1), \ldots, \gamma (a_k)  ) $$
\end{itemize}
and such that for all $n \geq 1$, the identity (\ref{module-rel}) is hold, for all $(a_1, \ldots, a_n) \in J^n_{A, M}$.
\end{defn}

 As before, we have used the following conventions in the above definition:
 
(i) whenever some $a_i \in A$, by $\gamma^{j} (a_i)$, we mean $\alpha^j (a_i)$,

(ii) whenever some $a_i \in M$, by $\gamma^{j} (a_i)$, we mean $\beta^j (a_j),$

(iii) whenever $(a_\lambda, \ldots, a_{\lambda + i -1}) \in A^{\otimes i}$, by $\eta_i (a_\lambda , \ldots, a_{\lambda + i -1})$, we mean $\mu_i (a_\lambda , \ldots, a_{\lambda + i -1}),$

It is easy to see that $A$ is an $HA_\infty$-bimodule over itself. The notion of $HA_\infty[1]$-module and $HA_\infty[1]$-bimodule can be defined in a similar way. They are related to $HA_\infty$-module and $HA_\infty$-bimodule by a degree shift. Next, we introduce Hochschild cohomology of an $HA_\infty$-algebra with coefficients in itself. As a motivation, we first give an equivalent description of Hochschild cohomology of a hom-associative algebra described in subsection \ref{subsec-two-one}.

Let $(A, \mu, \alpha)$ be a hom-associative algebra. It can be considered as a $HA_\infty$-algebra whose underlying graded vector space (denoted by the same symbol $A$) is concentrated in degree $0$. Consider the suspension $V= s A$ which is concentrated in degree $1$. The map $\alpha : A \rightarrow A$ induces a degree $0$ map $\alpha : V \rightarrow V$ and the multiplication $\mu$ on $A$ induces a multiplication (also denoted by the same symbol) $\mu : V^{\otimes 2} \rightarrow V$ of degree $-1$ satisfying
$$\mu (   \alpha (sa) \otimes \alpha (sb) ) = \alpha ( \mu (sa \otimes sb)).$$
Hence, it induces an element $\widetilde{\mu} \in \text{Coder}^{-1}_\alpha (TV)$. The hom-associativity of $\mu$ implies that $\widetilde{\mu} \circ \widetilde{\mu} = 0$. Moreover, we observe that if $f \in C^n_\alpha (A, A)$, it can be considered as a map $f : V^{\otimes n} \rightarrow V$ of degree $- (n-1)$ satisfying
$$f (\alpha (sa_1) \otimes \cdots \otimes \alpha (sa_n)) = \alpha (f (sa_1 \otimes \cdots \otimes sa_n)).$$
Therefore, it lifts to a coderivation $\widetilde{f} \in \text{Coder}^{-(n-1)}_\alpha (TV)$. In fact, we get an isomorphism
$$ \oplus C^\bullet_\alpha (A, A) ~\cong ~ \oplus \text{Coder}^{-(\bullet - 1)}_\alpha (TV), ~ f \mapsto \widetilde{f}.$$
Moreover, for $f \in C^n_\alpha (A, A)$, the Hochschild coboundary is given by
\begin{center}
$\widetilde{\delta_\alpha (f)} = [\widetilde{\mu} , \widetilde{f}] = \widetilde{\mu} \circ \widetilde{f} - (-1)^{(n-1)} \widetilde{f} \circ \widetilde{\mu},$
\end{center}
where on the right hand side, we have used the graded Lie bracket on coderivations.

Let $(A, \mu_k, \alpha)$ be an $HA_\infty$-algebra with the corresponding coderivation is given by $D \in \text{Coder}^{-1}_\alpha (TV)$. The Hochschild cochain complex of $A$ with coefficients in itself is given by $\big(  C^\bullet_\alpha (A, A), \delta_\alpha \big)$, where  $C^\bullet_\alpha (A, A) = \text{Coder}^{-(\bullet -1)} (TV)$ and
$$ \delta_\alpha (D') := [D, D'] = D \circ D' - (-1)^{|D'|} D' \circ D, ~~ \text{for }D' \in \text{Coder}^{|D'|}_\alpha (TV).$$
Since $D \circ D = 0$, it follows that
\begin{align*}
\delta_\alpha^2 (D') =~& [D, D \circ D' - (-1)^{|D'|} D' \circ D]\\
=~& D \circ D \circ D' - (-1)^{|D'|} D \circ D' \circ D - (-1)^{|D'|+1} D \circ D' \circ D + (-1)^{|D'|+ |D'|+1} D' \circ D \circ D = 0.
\end{align*}

One may also define the Hochschild cohomology of an $HA_\infty$-algebra with coefficients in an arbitrary $HA_\infty$-bimodule. See \cite{trad} for the basic idea of the Hochschild cohomology of an $A_\infty$-algebra with coefficients in a bimodule.

It is known that the Hochschild cohomology of an associative algebra carries a Gerstenhaber algebra structure \cite{gers0}. This result has been extended by Getzler and Jones to $A_\infty$-algebras \cite{getz-jon}. Recently, the present author shows that the Hochschild cohomology $H^\bullet_\alpha (A, A)$ of a hom-associative algebra also carries a similar structure \cite{das, das2}. More precisely, given a hom-associative algebra $(A, \mu, \alpha)$, there is a degree $-1$ graded Lie bracket on $C^\bullet_\alpha (A, A)$ given by
\begin{center}
$[f,g] = f \circ g - (-1)^{(p-1)(q-1)} g \circ f, ~~~ \text{ for } f \in C^p_\alpha (A, A), ~ g \in C^q_\alpha (A, A),$
\end{center}
where $(f \circ g)$ is given by
\begin{center}
$(f \circ g)(a_1, \ldots, a_{p+q-1}) = \sum_{i=1}^p (-1)^{(i-1)(q-1)} ~ f (\alpha^{q-1} a_1, \ldots, g (a_i, \ldots, a_{i+q-1}), \ldots, \alpha^{q-1} a_{p+q-1}),$
\end{center}
for $a_1, \ldots, a_{p+q-1} \in A$ (see \cite{amm-ej-makh, das}). There is also a cup-product $\cup_\alpha$ on $C^\bullet_\alpha (A, A)$ defined by
\begin{center}
$(f \cup_\alpha g)(a_1, \ldots, a_{p+q}) = \mu \big( \alpha^{q-1} f (a_1, \ldots, a_m), \alpha^{p-1} g(a_{m+1}, \ldots, a_{m+n}) \big).$
\end{center}
Then the bracket and the cup-product on $C^\bullet_\alpha (A, A)$ induces a Gerstenhaber algebra structure on the cohomology $H^\bullet_\alpha (A, A)$ \cite{das}.

Motivated from the result of \cite{getz-jon} and \cite{das}, we now prove that the Hochschild cohomology of an $HA_\infty$-algebra carries a Gerstenhaber structure. Since the main steps of the proof are similar to the proof of \cite{getz-jon}, we only define the components of the Gerstenhaber structure on the cohomology.

Let $A$ be a graded vector space and $\alpha : A \rightarrow A$ be a linear map of degree $0$. Then $\alpha$ induces a degree $0$ map (denoted by the same symbol) $\alpha : V \rightarrow V$, where $V = sA$. 
We know that the graded space $\text{Coder}^\bullet_\alpha (TV)$ carries a graded Lie algebra structure (see subsection \ref{ha-coder-subsec}). Therefore, the graded vector space $C^\bullet_\alpha (A, A) = \text{Coder}^{-(\bullet - 1)}_\alpha (TV)$ inherits a degree $-1$ graded Lie bracket. More precisely, let
 $D \in C^p_\alpha (A, A)$ and $D' \in C^q_\alpha (A, A)$. Then $D = \sum_{n \geq 1} \widetilde{\varrho_n}$ and $ D' =  \sum_{m \geq 1} \widetilde{\sigma_m}$, where each $\varrho_n : V^{\otimes n} \rightarrow V$ is a  map of degree $-(p-1)$ and each 
$\sigma_m : V^{\otimes m} \rightarrow V$ is a map of degree $-(q-1).$ For $\varrho_n : V^{\otimes n} \rightarrow V$ and $\sigma_m : V^{\otimes m} \rightarrow V$, we define $\varrho_n \circ \sigma_m : V^{n + m -1} \rightarrow V$ by
\begin{align*}
(\varrho_n \circ \sigma_m) (v_1, \ldots, v_{n+m-1}) = \sum_{i=1}^{n} (-1)^{(\sum_{l=1}^{i-1} |v_l|)(q-1)} ~\varrho_n \big(\alpha^{m-1}v_1, \ldots, \sigma_m (v_i, \ldots, v_{i+m-1}), \\
\ldots, \alpha^{m-1} v_{n+m-1} \big).
\end{align*}
Define $D \circ D' = \sum_{k \geq 1} \sum_{n+m-1 = k} \widetilde{\varrho_n \circ \sigma_m}$. The degree $-1$ graded Lie bracket on $C^\bullet_\alpha (A, A)$ is given by
\begin{center}
$ [D , D'] = D \circ D' - (-1)^{(p-1)(q-1)} D' \circ D.$
\end{center}

\medskip

Next, we asuume that $(A, \mu_k, \alpha)$ be an $HA_\infty$-algebra with the corresponding coderivation $D = \sum_{n \geq 1} \widetilde{\varrho_n} \in C^2_\alpha (A, A) = \text{Coder}^{-1}_\alpha (TV)$. For any $D', D'' \in C^\bullet_\alpha (A, A)$, we have
\begin{align*}
 \delta_\alpha ([D',D'']) = [D, [D',D'']]
 =~& [[D, D'], D''] + (-1)^{(q-1)}~ [D', [D, D'']] \\
 =~& [\delta_\alpha (D'), D''] + (-1)^{(q-1)}~ [ D', \delta_\alpha (D'')],
 \end{align*}
for $D' \in C^q_\alpha (A, A)$. This shows that the degree $-1$ graded Lie bracket on $C^\bullet_\alpha (A, A)$ induces a degree $-1$ graded Lie bracket on the cohomology $H^\bullet_\alpha (A, A)$.
We can also define a cup-product $M(-,-)$ on $C^\bullet_\alpha (A, A)$ as follows. For $D' = \sum_{m \geq 1} \widetilde{\sigma_m} \in C^q_\alpha (A, A)$ and $D'' = \sum_{n \geq 1} \widetilde{\tau_n} \in C^r_\alpha (A, A)$, define
$$M(D',D'') = \sum_{l \geq 1} \sum_{m+n+k = l, i \geq 1, i+m \leq j \leq m +k, k \geq 0}  \widetilde{\varrho_{k+2} (\text{id}, \ldots, \text{id}, \sigma_m, \text{id}, \ldots, \text{id}, \tau_n, \text{id}, \ldots, \text{id})},$$
where
\begin{align*}
&\varrho_{k+2} (\text{id}, \ldots, \text{id}, \sigma_m, \text{id}, \ldots, \text{id}, \tau_n, \text{id}, \ldots, \text{id}) (v_1, \ldots, v_{m+n+k}) \\
&= (-1)^{\psi}~ \varrho_{k+2} \big( \alpha^{m+n-2} v_1, \ldots, \alpha^{n-1} \sigma_m (v_i, \ldots, v_{i+m-1}), \ldots, \alpha^{m-1} \tau_n (v_j, \ldots, v_{j+n-1}),\\
& \hspace*{8cm} \ldots, \alpha^{m+n-2} v_{m+n+k} \big)
\end{align*}
with $\psi = (q-1) (\sum_{l=1}^{i-1} |v_l|) ~+~ (r-1) \big( \sum_{l=1}^{j-1} |v_l| \big)$.

Note that, unlike the case of associative algebra or hom-associative algebra, the above defined cup-product is not associative in general. In fact, Getzler and Jones showed that the Hochschild cochains $C^\bullet (A, A)$ of an $A_\infty$-algebra inherits an $A_\infty$-algebra structure \cite{getz-jon}. More precisely, they considered a system of braces $\{-\}\{-, \ldots, -\}$ on $C^\bullet (A, A)$. The $A_\infty$-algebra structure on $C^\bullet (A, A)$ is then given by
\begin{center}
$\psi_k (D_1, \ldots, D_k) = \{D\} \{D_1, \ldots, D_k\}.$
\end{center}
In our case of an $HA_\infty$-algebra, we may also consider braces $\{-\}\{-,\ldots,-\}$ on $C^\bullet_\alpha (A,A)$ suitably twisted by $\alpha$, which will induce an $A_\infty$-algebra structure on $C^\bullet_\alpha (A,A)$. (A detailed description for the braces on a non-graded vector space $A$ together with a linear map $\alpha : A \rightarrow A$ is given in \cite{das2}). Therefore, the cohomology (which is the Hochschild cohomology $H^\bullet_\alpha (A,A)$) carries a graded associative algebra structure. Finally, a similar observation made in Getzler and Jones \cite{getz-jon} (see \cite{das} for the calculation of hom-associative algebra case) one can prove that the induced associative product on the cohomology is graded commutative as well. Moreover, the degree $-1$ graded Lie bracket and the product on the cohomology turns out to satisfy the graded Leibniz rule. Thus, we get the following.

\begin{thm}\label{cohomology-g-alg}
Let $(A, \mu_k, \alpha)$ be a $HA_\infty$-algebra. Then its Hochschild cohomology $H^\bullet_\alpha (A, A)$ carries a Gerstenhaber algebra structure.
\end{thm}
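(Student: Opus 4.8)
The plan is to follow the strategy of Getzler and Jones \cite{getz-jon} for $A_\infty$-algebras, carrying the hom-twist by $\alpha$ throughout as in \cite{das2}. By Theorem \ref{coder-ha-inf} the $HA_\infty$-structure is an element $D\in\mathrm{Coder}^{-1}_\alpha(TV)$ with $D\circ D=0$, where $V=sA$, and $C^\bullet_\alpha(A,A)=\mathrm{Coder}^{-(\bullet-1)}_\alpha(TV)$ with differential $\delta_\alpha=[D,-]$. The degree $-1$ graded Lie bracket on $C^\bullet_\alpha(A,A)$ is the graded commutator of coderivations; since $[D,D]=0$, the graded Jacobi identity immediately gives that $\delta_\alpha$ is a derivation of this bracket, so it descends to a degree $-1$ graded Lie bracket on $H^\bullet_\alpha(A,A)$. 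This half is already recorded in the discussion above.

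First I would introduce on $C^\bullet_\alpha(A,A)$ the $\alpha$-twisted system of braces $\{D\}\{D_1,\dots,D_k\}$, defined by inserting $D_1,\dots,D_k$ into distinct slots of $D$ and filling the remaining slots with the powers of $\alpha$ prescribed by the hom-twist pattern of \eqref{ha-eqn} and of the lifting formula in Lemma \ref{lifting-lemma}; this is the twisted analogue of the braces of \cite{das2}, and the cup-product $M(-,-)$ already written down is, up to sign, $\{D\}\{-,-\}$. The crucial algebraic step is to verify that these braces satisfy the Gerstenhaber--Voronov brace relations with the $\alpha$-twists in place --- in other words, that $C^\bullet_\alpha(A,A)$ is a homotopy $G$-algebra (indeed a $B_\infty$-algebra). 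Granting this, the operations $\psi_k(D_1,\dots,D_k):=\{D\}\{D_1,\dots,D_k\}$ satisfy the $A_\infty$-relations as a formal consequence of the brace relations together with $D\circ D=0$, exactly as in \cite{getz-jon}; in particular $\psi_2$ is the cup-product, whose failure of associativity is governed by $\psi_3$ via the $n=3$ relation. Since the homology of an $A_\infty$-algebra is a graded associative algebra, $H^\bullet_\alpha(A,A)$ acquires a graded associative product $\cup$.

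Next I would show that $\cup$ is graded commutative and that $\cup$ and $[-,-]$ satisfy the graded Leibniz rule, in each case by exhibiting an explicit chain homotopy. For commutativity one defines an $\alpha$-twisted $\smile_1$-operation and checks that $D'\cup D''\mp D''\cup D'$ is $\delta_\alpha$ of a $\smile_1$-term plus $\smile_1$ applied to $\delta_\alpha$-terms, so the difference vanishes in cohomology; for the Leibniz rule one uses the usual homotopy exhibiting $[-,-]$ as a biderivation of $\cup$ up to a $\delta_\alpha$-exact correction. These are the hom-twisted transcriptions of the computations of \cite{das} for hom-associative algebras and of \cite{gers0,getz-jon} classically, and no new idea is needed. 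Assembling the three pieces --- the degree $-1$ bracket, the graded commutative associative product, and the Leibniz compatibility --- on $H^\bullet_\alpha(A,A)$ then yields the asserted Gerstenhaber algebra structure.

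The hard part will be purely bookkeeping: checking that the powers of $\alpha$ inserted into the braces, the cup-product, and the $\smile_1$-operation are mutually coherent, so that the twisted brace relations, the $A_\infty$-relations for the $\psi_k$, the homotopy-commutativity identity, and the homotopy-Leibniz identity all close up exactly. Once the twisted brace relations are established --- for which I would lean on \cite{das2} --- the remainder is a mechanical adaptation of \cite{getz-jon}.
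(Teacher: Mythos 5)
Your proposal follows essentially the same route as the paper: both pass to $D\in\mathrm{Coder}^{-1}_\alpha(TV)$ with $\delta_\alpha=[D,-]$, take the coderivation commutator as the degree $-1$ bracket, obtain the product from the $\alpha$-twisted cup-product/braces via the Getzler--Jones argument (leaning on \cite{das2}), and settle commutativity and the Leibniz rule by the homotopies adapted from \cite{gers0,das}. The paper likewise only sketches these verifications and defers the brace bookkeeping to the cited references, so your outline matches its proof in both strategy and level of detail.
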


\subsection{Deformations}
Let $(A, \mu_k, \alpha)$ be an $HA_\infty$-algebra with the corresponding coderivation $D \in \text{Coder}^{-1}_\alpha (TV)$, where $V= sA$. A formal one parameter deformation of $A$ is given by a formal sum 
\begin{center}
$D_t = D + t D_1 + t^2 D_2 + \cdots \in \text{Coder}^{-1}_\alpha (TV)[[t]],$
\end{center}
such that $D_t \circ D_t = 0$. This is equivalent to a system of equations
\begin{align*}
&D \circ D = 0,\\
&D \circ D_1 + D_1 \circ D = 0,\\
& \cdots \\
&D \circ D_n + D_1 \circ D_{n-1} + \cdots + D_{n-1} \circ D_1 + D_n \circ D = 0,\\
&\cdots .
\end{align*}
The first equation corresponds to the given $HA_\infty$-algebra structure on $A$. The second equation implies that $\delta_\alpha (D_1) = 0$. This shows that $D_1$ is a $2$-cocycle in the Hochschild cohomology of $A$. The $2$-cocycle $D_1$ is called the infinitesimal of the deformation $D_t$. More generally, if $D_1 = D_2 = \cdots = D_{n-1} = 0$ and $D_n$ is non-zero, then $D_n$ is a $2$-cocycle and is called the $n$-infinitesimal of the deformation $D_t$.

\begin{defn}
Two deformations $D_t = D + \sum_{i \geq 1} t^i D_i$ and $\widetilde{D}_t = D + \sum_{i \geq 1} t^i \widetilde{D}_i$ are said to be equivalent if there is a formal sum
\begin{center}
$ \Phi_t = \Phi_0 + t \Phi_1 + t^2 \Phi_2 + \cdots \in \text{Coder}^0_\alpha (TV)[[t]]$
\end{center}
with $\Phi_0 = \text{id}$ and such that $D_t \circ \Phi_t = \Phi_t \circ \widetilde{D}_t.$
\end{defn}

By equating coeffcients of $t^0$ and $t^1$ in both sides, we obtain
\begin{align*}
 D \circ \Phi_0 =~& \Phi_0 \circ D,\\
 D \circ \Phi_1 + D_1 =~& \widetilde{D}_1 + \Phi_1 \circ D.
\end{align*}
The first equation holds automatically as $\Phi_0 = \text{id}$, while the second equation implies that $D_1 - \widetilde{D}_1 = - \delta_\alpha (\Phi_1).$ This shows that infinitesimals of equivalent deformations are cohomologous. That is, they gives rise to same cohomology class in $H^2_\alpha (A,A).$

\begin{defn}
A deformation $D_t = D + \sum_{i \geq 1} t^i D_i$ of $A$ is called trivial if $D_t$ is equivalent to the deformation $\widetilde{D}_t = D$. An $HA_\infty$-algebra $A$ is called rigid if any deformation of $A$ is trivial.
\end{defn}

\begin{prop}
Let $D_t = D + \sum_{i \geq 1} t^i D_i$ be a deformation of $A$. Then $D_t$ is equivalent to some deformation
\begin{center}
$\widetilde{D}_t = D + t^p \widetilde{D}_p + t^{p+1} \widetilde{D}_{p+1} + \cdots $
\end{center}
in which the first non-vanishing term $\widetilde{D}_p$ is in $Z^2_\alpha (A, A)$ but not in $B^2_\alpha (A, A)$.
\end{prop}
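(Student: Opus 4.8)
The plan is the standard ``reduction of the leading term'' argument from formal deformation theory. First I would let $p \geq 1$ be the smallest index for which $D_p \neq 0$, so that $D_t = D + t^p D_p + t^{p+1} D_{p+1} + \cdots$. Extracting the coefficient of $t^p$ in the equation $D_t \circ D_t = 0$ and using $D_1 = \cdots = D_{p-1} = 0$ together with $D \circ D = 0$, one is left with $D \circ D_p + D_p \circ D = 0$; since $D$ and $D_p$ both have (odd) degree $-1$, this is exactly $\delta_\alpha (D_p) = [D, D_p] = 0$, so $D_p \in Z^2_\alpha (A, A)$. If $D_p \notin B^2_\alpha (A, A)$ there is nothing to prove and one takes $\widetilde{D}_t = D_t$.

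So assume $D_p = \delta_\alpha (\Psi)$ for some $\Psi \in C^1_\alpha (A, A) = \text{Coder}^0_\alpha (TV)$. Consider the equivalence datum $\Phi_t := \text{id} - t^p \Psi$ (of the type appearing in the definition of equivalent deformations, with $\Phi_0 = \text{id}$), which is invertible with $\Phi_t^{-1} = \text{id} + t^p \Psi + t^{2p} \Psi^2 + \cdots$, and set $\widetilde{D}_t := \Phi_t^{-1} \circ D_t \circ \Phi_t$, so that $D_t \circ \Phi_t = \Phi_t \circ \widetilde{D}_t$ and $D_t, \widetilde{D}_t$ are equivalent. Since $\Psi$ commutes with $\alpha$ and $\widetilde{D}_t^{\,2} = \Phi_t^{-1} D_t^{\,2} \Phi_t = 0$, the series $\widetilde{D}_t$ is again a formal deformation of $A$. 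Expanding modulo $t^{p+1}$ gives $\widetilde{D}_t = D + t^p \big( D_p - (D \circ \Psi - \Psi \circ D) \big) + O(t^{p+1}) = D + t^p \big( D_p - \delta_\alpha (\Psi) \big) + O(t^{p+1}) = D + O(t^{p+1})$; here I would keep track of signs, possibly replacing $\Psi$ by $-\Psi$, using that $\Psi$ has even degree so $[D, \Psi] = D \circ \Psi - \Psi \circ D$. Thus the order of the first non-vanishing term has strictly increased.

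Now I would iterate: at each stage the new first non-vanishing term is a $2$-cocycle by the same coefficient computation, and as long as it lies in $B^2_\alpha (A, A)$ one repeats, producing a strictly increasing sequence of indices $p < p_1 < p_2 < \cdots$ and equivalences $\Phi^{(k)}_t$. If the process stops after finitely many steps, the resulting deformation has its first non-vanishing term in $Z^2_\alpha (A, A) \setminus B^2_\alpha (A, A)$, as desired. If it never stops, then since $p_k \to \infty$ the infinite composite $\Phi_t := \cdots \circ \Phi^{(2)}_t \circ \Phi^{(1)}_t$ converges coefficientwise in the $t$-adic topology (each power of $t$ is affected by only finitely many factors), and it realizes an equivalence between $D_t$ and the trivial deformation $\widetilde{D}_t = D$, a degenerate case covered by the statement once ``first non-vanishing term'' is read as vacuous.

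The main obstacle I expect is precisely this convergence and bookkeeping step: checking that the infinite composition of equivalences is well defined in $\text{Coder}^0_\alpha (TV)[[t]]$, that it still commutes with $\alpha$, and that it indeed conjugates $D_t$ to $D$; together with keeping the sign conventions straight so that the leading term genuinely cancels at each stage. The finite-step portion is a routine coefficient comparison using only $D \circ D = 0$ and the definition $\delta_\alpha(-) = [D, -]$ recalled in the previous subsection.
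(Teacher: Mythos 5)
Your proposal is correct and follows essentially the same route as the paper: check that the first nonvanishing coefficient is a $2$-cocycle, and if it is a coboundary $\delta_\alpha(\Psi)$, conjugate by $\Phi_t = \mathrm{id} \pm t^p\Psi$ to push the first nonvanishing term to strictly higher order, then iterate. The only difference is that you explicitly handle the possibly non-terminating case via a $t$-adically convergent infinite composite of equivalences (ending at the trivial deformation), a point the paper's proof leaves implicit.
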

\begin{proof}
Let $D_t = D + \sum_{i \geq 1} t^i D_i$ be a deformation such that $D_1= \cdots = D_{n-1} = 0$ and $D_n$ is the first non-zero term. Then it has been shown that $D_n \in Z^2_\alpha (A, A)$ is a Hochschild $2$-cocycle. If $D_n$ is not in $B^2_\alpha (A, A)$, we are done. If $D_n$ is in $B^2_\alpha (A, A)$, that is, $D_n = - \delta_\alpha (\Phi_n)$, for some $\Phi_n \in C^1_\alpha (A, A)$, then setting
\begin{center}
$ \Phi_t = \text{id} + t^n \Phi_n.$
\end{center}
We define $\overline{D}_t = \Phi_t^{-1} \circ D_t \circ \Phi_t$. Then $\overline{D}_t$ defines a formal deformation of the form
\begin{center}
$\overline{D}_t = D + t^{n+1} \overline{D}_{n+1} + t^{n+2} \overline{D}_{n+2} + \cdots.$
\end{center}
Thus, it follows that $\overline{D}_{n+1}$ is a $2$-cocycle. If this $2$-cocycle is not in $B^2_\alpha (A, A)$, we are done. If this is coboundary, we can apply the same method again. In this way, we get a required type of equivalent deformation.
\end{proof}

As a corollary, we get the following.
\begin{thm}\label{2-zero-rigid}
If $H^2_\alpha (A, A) = 0$, then $A$ is rigid.
\end{thm}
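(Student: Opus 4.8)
The plan is to derive this directly from the preceding Proposition, which is the real engine here; Theorem \ref{2-zero-rigid} is essentially its corollary, so the argument will be short.

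First I would take an arbitrary formal one parameter deformation $D_t = D + \sum_{i \geq 1} t^i D_i$ of the $HA_\infty$-algebra $A$, and invoke the Proposition to replace it by an equivalent deformation $\widetilde{D}_t = D + t^p \widetilde{D}_p + t^{p+1} \widetilde{D}_{p+1} + \cdots$ whose first non-vanishing term $\widetilde{D}_p$ (if it exists) lies in $Z^2_\alpha(A,A)$ but not in $B^2_\alpha(A,A)$. Next I would use the hypothesis $H^2_\alpha(A,A) = 0$: since $H^2_\alpha(A,A) = Z^2_\alpha(A,A) / B^2_\alpha(A,A)$, the vanishing forces $Z^2_\alpha(A,A) = B^2_\alpha(A,A)$, so there is no cochain in $Z^2_\alpha(A,A)$ that fails to be a coboundary. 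Hence the term $\widetilde{D}_p$ cannot exist, which means $\widetilde{D}_t = D$, i.e. the deformation $D_t$ is equivalent to the trivial deformation $\widetilde{D}_t = D$. Since the chosen deformation was arbitrary, every deformation of $A$ is trivial, so $A$ is rigid by definition.

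I do not anticipate a genuine obstacle: the only point requiring any care is making sure the iterative equivalence in the Proposition terminates in the relevant formal sense — that at each stage one pushes the first non-vanishing term to strictly higher order — but that bookkeeping is already carried out in the proof of the Proposition, so here it suffices to cite it. One might add a remark that, in the formal power series setting, "$\widetilde D_t = D$" is obtained as the limit of the successive gauge transformations $\Phi_t = \mathrm{id} + t^n \Phi_n$, each of which only alters coefficients of order $> n$, so the composite gauge transformation is a well-defined element of $\mathrm{Coder}^0_\alpha(TV)[[t]]$ with constant term $\mathrm{id}$.
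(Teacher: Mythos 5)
Your argument is correct and is exactly how the paper obtains this result: Theorem \ref{2-zero-rigid} is stated there as an immediate corollary of the preceding Proposition, with the vanishing of $H^2_\alpha(A,A)$ ruling out the existence of a first non-vanishing term $\widetilde{D}_p \in Z^2_\alpha(A,A) \setminus B^2_\alpha(A,A)$. Your added remark on assembling the successive gauge transformations into a single well-defined element of $\mathrm{Coder}^0_\alpha(TV)[[t]]$ is a reasonable clarification of a point the paper leaves implicit, but it does not change the approach.
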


Let $(A, \mu_k, \alpha)$ be a $HA_\infty$-algebra. A deformation of $A$ of order $n$ is of the form $D_t = D + \sum_{i=1}^{n} t^i D_i$ such that $(D_t)^2 = 0$. Next, we shall consider the problem of extending a deformation of order $n$ to a deformation of order $n+1$.
Suppose there is a coderivation $D_{n+1} \in \text{Coder}^{-1}_\alpha (TV)$ such that
$$\widetilde{D}_t = D_t + t^{n+1} D_{n+1}$$
is a deformation of order $n+1$. Then we say that $D_t$ extends to a deformation of order $n+1$.

Since we assume that $D_t = D + \sum_{i = 1}^n t^i D_i$ is a deformation, we have
\begin{align}\label{deform-rel}
D \circ D_i + D_1 \circ D_{i-1} + \cdots + D_{i-1} \circ D_1 + D_i \circ D = 0, ~~~ \text{ for } i = 1, 2, \ldots, n.
\end{align}
Here we use the convention that $D_0 = D$.
For $\widetilde{D}_t = D_t + t^{n+1} D_{n+1}$ to be a deformation, one more condition need to satisfy, namely,
\begin{center}
$D \circ D_{n+1} + D_1 \circ D_{n} + \cdots + D_{n} \circ D_1 + D_{n+1} \circ D = 0.$
\end{center}
In other words, $D_{n+1}$ must satisfy
\begin{center}
$ \delta_\alpha (D_{n+1}) = - \sum_{i=1}^n D_i \circ D_{n+1-i}.$
\end{center}
The right hand side of the above equation is called the obstruction to extend the deformation $D_t$ to a deformation of order $n+1$.

\begin{prop}
The obstruction is a Hochschild $3$-cocycle, that is,
\begin{center}
$ \delta_\alpha \big( - \sum_{i=1}^n D_i \circ D_{n+1-i} \big) = 0.$
\end{center}
\end{prop}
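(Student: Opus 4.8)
The plan is to carry out the computation inside the graded Lie algebra $\big(\text{Coder}^\bullet_\alpha(TV),[-,-]\big)$ of subsection \ref{ha-coder-subsec}, where $\delta_\alpha = [D,-]$ and $D, D_1, \dots, D_n$ all lie in the odd component $\text{Coder}^{-1}_\alpha(TV)$. Write $\mathcal{O}_{n+1} := \sum_{i=1}^{n} D_i \circ D_{n+1-i} = \sum_{i+j = n+1,\; i,j\geq 1} D_i\circ D_j$; we must show $\delta_\alpha(\mathcal{O}_{n+1}) = 0$. First I would record the consequences of oddness: for any $D', D'' \in \text{Coder}^{-1}_\alpha(TV)$ the definition $[D',D''] = D'\circ D'' - (-1)^{(-1)(-1)}D''\circ D'$ gives $[D',D''] = D'\circ D'' + D''\circ D'$; and since relabelling $i\leftrightarrow j$ leaves the sum $\sum_{i+j=m}D_i\circ D_j$ unchanged, we may rewrite $\mathcal{O}_{n+1} = \tfrac12\sum_{i+j=n+1,\,i,j\geq1}[D_i,D_j]$.

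Next I would extract the needed identities from the order-$n$ deformation relations (\ref{deform-rel}). For each $1\le k\le n$ the relation $\sum_{i+j=k,\,i,j\geq0}D_i\circ D_j = 0$ (with $D_0 = D$) isolates the two terms $D\circ D_k + D_k\circ D = [D,D_k] = \delta_\alpha(D_k)$, leaving
\[
\delta_\alpha(D_k) \;=\; -\sum_{i+j=k,\,i,j\geq1}D_i\circ D_j \;=\; -\tfrac12\sum_{i+j=k,\,i,j\geq1}[D_i,D_j], \qquad 1\le k\le n.
\]

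Now for the core computation. Applying $\delta_\alpha = [D,-]$ to $\mathcal{O}_{n+1} = \tfrac12\sum_{i+j=n+1}[D_i,D_j]$, using the graded Jacobi identity together with graded antisymmetry and the relabelling $i\leftrightarrow j$ — where every Koszul sign that arises is $(-1)$ raised to an easily evaluated integer because all arguments are odd — one obtains
\[
\delta_\alpha(\mathcal{O}_{n+1}) \;=\; \sum_{i+j=n+1,\,i,j\geq1}[\delta_\alpha(D_i),D_j].
\]
Every index $i$ occurring here satisfies $i\le n$, so the formula of the previous step applies and yields
\[
\delta_\alpha(\mathcal{O}_{n+1}) \;=\; -\tfrac12\sum_{a+b+c=n+1,\;a,b,c\geq1}[[D_a,D_b],D_c].
\]
Finally, the cyclic group $\mathbb{Z}/3$ acts on the index set $\{(a,b,c): a+b+c=n+1,\ a,b,c\geq1\}$ by rotation, and the sum of $[[D_a,D_b],D_c]$ over one cyclic orbit is precisely $[[D_a,D_b],D_c]+[[D_b,D_c],D_a]+[[D_c,D_a],D_b]$, which vanishes by the graded Jacobi identity for the three odd elements $D_a,D_b,D_c$ (a degenerate orbit $a=b=c$ being killed by the same identity specialised to three equal odd elements, using characteristic zero). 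Summing over orbits gives $\delta_\alpha(\mathcal{O}_{n+1}) = 0$, hence $\delta_\alpha\big(-\sum_{i=1}^n D_i\circ D_{n+1-i}\big) = 0$.

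The only point that requires real care is the sign bookkeeping in the Jacobi step and in the $i\leftrightarrow j$ relabelling; since $D$ and each $D_i$ lie in $\text{Coder}^{-1}_\alpha(TV)$, all the relevant degrees are odd, every sign collapses, and nothing survives to obstruct the telescoping of the triple sum. One may equally well avoid passing to $[-,-]$ and argue directly from the graded pre-Lie identity for $\circ$, but routing everything through the Lie bracket keeps the combinatorics of the last step transparent.
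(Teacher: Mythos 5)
Your argument is correct, but it follows a genuinely different route from the paper's. The paper works directly with the (associative) composition $\circ$: it pre- and post-composes the $i$-th order-$n$ deformation relation with $D_{n+1-i}$ to obtain identities $A_i=0$ and $B_i=0$, sums over $i$, and observes that after relabelling all triple composites $D_a\circ D_b\circ D_c$ with $a,b,c\geq 1$ cancel in $\sum_i A_i-\sum_i B_i$, leaving exactly $\sum_i D\circ D_i\circ D_{n+1-i}-\sum_i D_{n+1-i}\circ D_i\circ D=[D,\sum_i D_i\circ D_{n+1-i}]=\delta_\alpha\big(\sum_i D_i\circ D_{n+1-i}\big)$; no Jacobi identity and no division by $2$ are needed. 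You instead repackage everything in the graded Lie algebra $\big(\mathrm{Coder}^\bullet_\alpha(TV),[-,-]\big)$: writing the obstruction as $\tfrac12\sum_{i+j=n+1}[D_i,D_j]$, using that $\delta_\alpha=[D,-]$ is a derivation of the bracket to reach $\sum_{i+j=n+1}[\delta_\alpha(D_i),D_j]$, substituting $\delta_\alpha(D_k)=-\tfrac12\sum_{a+b=k}[D_a,D_b]$, and killing the resulting triple sum via the cyclic Jacobi identity for three odd elements; your sign bookkeeping (all $D_i$ odd, $\delta_\alpha(D_j)$ even) and your treatment of the degenerate orbit $a=b=c$ are correct. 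The trade-offs: the paper's cancellation argument needs only associativity of $\circ$ and works without dividing by integers, while your version leans on the graded Lie structure that the paper asserts for $\mathrm{Coder}^\bullet_\alpha(TV)$ and on the characteristic-zero hypothesis (for the factors $\tfrac12$ and $\tfrac13$); in exchange, yours is the standard DGLA-style ``obstruction is closed'' argument, remains valid if $\circ$ is merely pre-Lie rather than associative, and makes the structural reason for the vanishing (Jacobi identity applied to brackets of lower-order terms) more transparent.
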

\begin{proof}
Note that $\{D, D_1, \ldots, D_n \}$ satisfy a system of relations (\ref{deform-rel}). Therefore, it follows that (by pre-compose and post-compose the $i$-th relation by $D_{n+1 -i}$)
\begin{align*}
A_i := D \circ D_i \circ D_{n+1-i} + D_1 \circ D_{i-1} \circ D_{n+1-i} + \cdots + D_{i-1} \circ D_1 \circ D_{n+1-i} + D_i \circ D \circ D_{n+1-i} = 0,\\
B_i := D_{n+1-i} \circ D \circ D_i +  D_{n+1-i} \circ D_1 \circ D_{i-1} + \cdots +  D_{n+1-i} \circ D_{i-1} \circ D_1 +  D_{n+1-i} \circ D_i \circ D = 0,
\end{align*}
for $i=1, 2, \ldots, n.$ Hence, we have
\begin{align}\label{deform-eqn-2}
\sum_{i=1}^n A_i - \sum_{i=1}^n B_i = 0.
\end{align}
By an easy observation (after cancelling terms) shows that
\begin{align*}
\sum_{i=1}^n A_i  -  \sum_{i=1}^n B_i =~& \sum_{i=1}^n D \circ D_i \circ D_{n+1-i}  -  \sum_{i=1}^n D_{n+1-i} \circ D_i \circ D \\
=~& [D, \sum_{i=1}^n D_i \circ D_{n+1-i} ] = \delta_\alpha \big( \sum_{i=1}^n D_i \circ D_{n+1-i} \big).
\end{align*}
Hence, the result follows from Equation (\ref{deform-eqn-2}).
\end{proof}

It follows from the above proposition that the obstruction defines a cohomology class in $H^3_\alpha (A, A)$. If this cohomology class is zero, then the obstruction is given by a coboundary (say $\delta_\alpha (D_{n+1})$). In other words, $\widetilde{D}_t = D_t + t^{n+1} D_{n+1}$ defines a deformation of order $n+1$.

In view of this, we get the following.
\begin{thm}\label{3-zero-extension}
If $H^3_\alpha (A, A) = 0$, every deformation of order $n$ can be extended to deformation of order $n+1$.
\end{thm}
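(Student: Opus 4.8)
The proof is now immediate from the two facts just established, so the plan is essentially to assemble them. First I would spell out what ``extending to order $n+1$'' means: given the order-$n$ deformation $D_t = D + \sum_{i=1}^n t^i D_i$ (so that the relations (\ref{deform-rel}) hold for $i = 1, \dots, n$, with the convention $D_0 = D$), one seeks $D_{n+1} \in \text{Coder}^{-1}_\alpha(TV)$ making $\widetilde{D}_t := D_t + t^{n+1} D_{n+1}$ satisfy $\widetilde{D}_t \circ \widetilde{D}_t = 0$ modulo $t^{n+2}$. Comparing the coefficient of $t^{n+1}$ on both sides, and using that $D_{n+1}$ has odd coderivation degree so that $[D, D_{n+1}] = D \circ D_{n+1} + D_{n+1} \circ D = \delta_\alpha(D_{n+1})$, this reduces to the single equation
\[
\delta_\alpha(D_{n+1}) = - \sum_{i=1}^n D_i \circ D_{n+1-i}.
\]

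Next I would invoke the preceding Proposition, which asserts that the right-hand side above is a Hochschild $3$-cocycle. Since $H^3_\alpha(A,A) = 0$ by hypothesis, every $3$-cocycle is a coboundary; in particular $- \sum_{i=1}^n D_i \circ D_{n+1-i} = \delta_\alpha(D_{n+1})$ for some $D_{n+1} \in C^2_\alpha(A,A) = \text{Coder}^{-1}_\alpha(TV)$. For this choice of $D_{n+1}$ the displayed equation holds, hence the relations (\ref{deform-rel}) are valid up to $i = n+1$, and $\widetilde{D}_t = D_t + t^{n+1} D_{n+1}$ is a deformation of order $n+1$ extending $D_t$.

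There is no substantial obstacle here: the genuine content — that the obstruction cochain is closed — has already been isolated in the Proposition above, and what remains is the standard ``obstruction class vanishes $\Rightarrow$ it is a coboundary $\Rightarrow$ solve for the next term'' step. The one place to stay attentive is the sign and degree bookkeeping in identifying $D \circ D_{n+1} + D_{n+1} \circ D$ with $\delta_\alpha(D_{n+1})$, which works precisely because both $D$ and $D_{n+1}$ lie in $\text{Coder}^{-1}_\alpha(TV)$, so that the Koszul sign $(-1)^{(-1)(-1)} = -1$ turns the graded commutator into the symmetric sum appearing in the expansion of $\widetilde{D}_t \circ \widetilde{D}_t$.
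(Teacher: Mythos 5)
Your proposal is correct and follows the paper's own argument exactly: the coefficient of $t^{n+1}$ in $\widetilde{D}_t \circ \widetilde{D}_t = 0$ reduces to $\delta_\alpha(D_{n+1}) = -\sum_{i=1}^{n} D_i \circ D_{n+1-i}$, the preceding proposition shows the right-hand side is a $3$-cocycle, and the vanishing of $H^3_\alpha(A,A)$ lets you solve for $D_{n+1}$. This is precisely the reasoning the paper gives in the paragraph preceding the theorem, with your sign/degree check of $[D,D_{n+1}] = D \circ D_{n+1} + D_{n+1} \circ D$ being a harmless (and correct) added detail.
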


\section{Strongly homotopy hom-Lie algebras}\label{sec7}
In this section, we first recall the definition of $HL_\infty$-algebra introduced in \cite{sheng-chen}. We give some other equivalent descriptions of $HL_\infty$-algebras. An appropriate skew-symmetrization of $HA_\infty$-algebras give rise to $HL_\infty$-algebras. Finally, we define module over $HL_\infty$-algebras and Chevalley-Eilenberg cohomology.

\subsection{$HL_\infty$-algebras}
\begin{defn}\label{hl}
	An $HL_\infty$-algebra is a graded vector space $L = \oplus L_i$ together with
	\begin{itemize}
		\item[(i)] a collection $\{ l_k | ~ 1 \leq k < \infty \}$ of skew-symmetric multilinear maps
		$l_k : L^{\otimes k} \rightarrow L$ with deg~$(l_k) = k-2$,
		\item[(ii)] a linear map $\alpha : L \rightarrow L$ of degree $0$ with
\begin{center}	
$\alpha \big(	l_k (a_1, \ldots, a_k)  \big) = l_k \big( \alpha (a_1), \ldots, \alpha (a_k) \big)$
\end{center}
	\end{itemize}
	such that for all $n \geq 1$,
	\begin{align}\label{hl1-iden}
	\sum_{i+j = n+1}^{} \sum_{\sigma}^{} \chi (\sigma) ~ (-1)^{i (j-1)} ~ l_j   \big(  l_i (a_{\sigma (1)}, \ldots, a_{\sigma (i)}), \alpha^{i-1} a_{\sigma (i+1)}, \ldots, \alpha^{i-1} a_{\sigma (n)} \big) = 0,
	\end{align}
	for all $a_1, \ldots, a_n \in L$, and $\sigma$ runs over all $(i, n-i)$ unshuffles with $i \geq 1$.
\end{defn}

An $HL_\infty$-algebra as above is denoted by $(L, l_k, \alpha)$. When $\alpha =$ identity, one gets the definition of an $L_\infty$-algebra. When $L$ is a vector space considered as a graded vector space concentrated in degree $0$, we get hom-Lie algebras \cite{hls}.

The above definition of an $HL_\infty$-algebra has the following consequences. For $n=1$, we have
$l_1^2 = 0,$
which means that the degree $-1$ map $l_1 : V \rightarrow V$ is a differential. Therefore, $(L, l_1)$ is a chain complex. If we write $l_2 = [-,-]$, then this bracket is graded skew-symmetric and satisfies (for $n=2$)
\begin{center}
$ l_1 [ a , b ] = [ l_1 (a) ,b ] + (-1)^{|a|} [ a, l_1(b)], ~\text{ for } a, b \in L.$
\end{center}
It says that the differential $l_1$ is a graded derivation for the bracket $[-,-]$. For $n=3$, we have
\begin{align*}
&(-1)^{|a||c|}[[a,b], \alpha(c)] +  (-1)^{|b||a|}  [ [b,c], \alpha(a)] +
 (-1)^{|c||b|} [[c,a], \alpha(b)] \\
&= (-1)^{|a||c| + 1} \bigg\{   l_1 \big(l_3(a,b,c)\big) + l_3 \big( l_1 (a), b, c \big) + (-1)^{|a|} l_3 \big( a, l_1 (b), c \big) + (-1)^{|a|+ |b|} l_3 \big( a,b, l_1 (c) \big)   \bigg\}.
\end{align*}
Therefore, the graded skew-symmetric product $l_2$ does not satisfy (in general) the graded hom-Jacobi identity.
However, it does satisfy up to a term involving $l_3$. Similarly, for higher $n$, we get higher coherence laws that $l_k$'s must satisfy. It is now easy to see that a graded hom-Lie algebra is an $HL_\infty$-algebra with $l_k = 0$, for $k \neq 2$, and, 
a DG hom-Lie algebra is an $HL_\infty$-algebra with $l_k = 0$, for $k \geq 3$.

It also follows from the above observation that the homology $H_* (L) = H_* (L, l_1)$ of an $HL_\infty$-algebra $(L, l_k, \alpha)$ carries a graded hom-Lie algebra.

\begin{exam}
Let $(L, l_k)$ be an $L_\infty$-algebra and $\alpha : L \rightarrow L$ be a degree $0$ map satisfying $\alpha \circ l_k = l_k \circ \alpha^{\otimes k}$, for all $k \geq 1$. Such a map $\alpha$ is called a strict morphism. If $\alpha: L \rightarrow L$ is a strict morphism, then $(L, \alpha^{k-1} \circ l_k, \alpha)$ is an $HL_\infty$-algebra.
\end{exam}

\begin{remark}
In \cite{sheng-chen} Sheng and Chen studied $2$-term $HL_\infty$-algebras in details. Motivated from the paper of Baez and Crans \cite{baez-crans}, they introduced hom-Lie $2$-algebras and show that the category of $2$-term $HL_\infty$-algebras and the category of hom-Lie $2$-algebras are equivalent \cite[Theorem 3.9]{sheng-chen}.
\end{remark}

\begin{remark}
In this respect, we also mention the following. Like any Lie algebra gives rise to a solution of the Yang-Baxter equation, any Lie $2$-algebra gives a solution of the `Zamolodchikov tetrahedron equation' (a categorification of the Yang-Baxter equation) \cite{baez-crans}. In the context of hom-type algebras, D. Yau proved that hom-Lie algebras give rise to a solution of the hom-Yang-Baxter equation (an hom analogue of the Yang-Baxter equation) \cite{yau2}. It is interesting to verify whether hom-Lie $2$-algebras give rise to a solution of a suitable hom analogue of the `Zamolodchikov tetrahedron equation'.
\end{remark}

Next, we introduce an equivalent notion of $HL_\infty [1]$-algebra.
\begin{defn}\label{hl1}
	An $HL_\infty [1]$-algebra structure on a graded vector space $V$ consists of degree $- 1$ symmetric linear maps $\varrho_k : V^{\otimes k} \rightarrow V$ and a linear map $\alpha : V \rightarrow V$ of degree $0$ with
	\begin{center}
	$ \alpha \big( \varrho_k ( v_1, \ldots, v_k ) \big) = \varrho_k \big( \alpha(v_1), \ldots, \alpha(v_k) \big)$
	\end{center}
	and for all $n \geq 1$,
	\begin{align}\label{hl1-id}
	\sum_{i + j = n + 1}^{} \sum_{\sigma \in Sh (i, n-i)}^{} \epsilon (\sigma)~ \varrho_{j} \big(  \varrho_i (v_{\sigma(1)}, \ldots, v_{\sigma (i)}), \alpha^{i-1} v_{\sigma(i+1)}, \ldots, \alpha^{i-1} v_{\sigma(n)}   \big) = 0, \text{ for } a_1, \ldots, a_n \in V.
	\end{align}
\end{defn}

Like classical case, an $HL_\infty$-structure is related to an $HL_\infty[1]$-structure by a degree shift. 
Let $(L, l_k, \alpha)$ be a $HL_\infty$-algebra. Take $V = s L$ and define
$\varrho_k : V^{\otimes k} \rightarrow V$ by
\begin{align}\label{hl-hl1-map}
\varrho_k = (-1)^{\frac{k(k-1)}{2}}~ s \circ l_k \circ (s^{-1})^{\otimes k}.
\end{align}
On the other hand $l_k$ can be reconstructed from $\varrho_k$ as $l_k = s^{-1} \circ \varrho_k \circ s^{\otimes k}.$ See \cite{lada-markl} for more details. Then we have the following.
\begin{prop}\label{hl-hl1-prop}
	An $HL_\infty$-structure on a graded vector space $L$ is equivalent to an $HL_\infty [1]$-structure on the graded vector space $V = sL$. 
\end{prop}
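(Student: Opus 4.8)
The plan is to check directly that the décalage maps
\[
\varrho_k \;=\; (-1)^{\frac{k(k-1)}{2}}\; s \circ l_k \circ (s^{-1})^{\otimes k}
\]
of $(\ref{hl-hl1-map})$ carry the data of Definition \ref{hl} to the data of Definition \ref{hl1} and back, in complete parallel with Proposition \ref{ha-ha1-prop} and with the classical correspondence between $L_\infty$- and $L_\infty[1]$-algebras recorded in \cite{lada-markl}. Since $s$ has degree $+1$, each $s^{-1}$ degree $-1$, and $l_k$ degree $k-2$, the map $\varrho_k$ has degree $1+(k-2)-k=-1$, so it is a degree $-1$ multilinear map $V^{\otimes k}\to V$ as required, and the inverse formula $l_k = s^{-1}\circ\varrho_k\circ s^{\otimes k}$ recovers a degree $k-2$ map. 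Because $\alpha$ has degree $0$ it commutes with $s$ and $s^{-1}$ with no Koszul sign, so from $\alpha\circ l_k = l_k\circ\alpha^{\otimes k}$ one reads off $\alpha\circ\varrho_k = \varrho_k\circ\alpha^{\otimes k}$, and the implication is reversible; this is the one place where the $\alpha$-twist must be seen to pass through the dictionary, and it does so trivially.

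Next I would check the (skew-)symmetry. Unwinding the suspension signs in $\varrho_k(v_{\sigma(1)}\otimes\cdots\otimes v_{\sigma(k)})$ for $\sigma\in S_k$ and $v_i = s a_i$ amounts to the classical décalage identity, which expresses the Koszul sign $\epsilon(\sigma; s a_1,\ldots,s a_k)$ of the shifted letters in terms of $\chi(\sigma; a_1,\ldots,a_k)$ together with precisely the sign by which the $k$ copies of $s^{-1}$ reshuffle; that reshuffling sign cancels between the permuted and unpermuted sides, so that $\varrho_k(v_{\sigma(1)}\otimes\cdots\otimes v_{\sigma(k)}) = \epsilon(\sigma)\,\varrho_k(v_1\otimes\cdots\otimes v_k)$ holds if and only if $l_k(a_{\sigma(1)},\ldots,a_{\sigma(k)}) = \chi(\sigma)\, l_k(a_1,\ldots,a_k)$. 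Thus each $\varrho_k$ is symmetric precisely when each $l_k$ is skew-symmetric.

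The heart of the proof is the translation of the structure identity. Substituting $v_i = s a_i$ and $\varrho_m = (-1)^{m(m-1)/2} s\circ l_m\circ (s^{-1})^{\otimes m}$ into each summand
\[
\epsilon(\sigma)\; \varrho_j\big( \varrho_i(v_{\sigma(1)},\ldots,v_{\sigma(i)}),\, \alpha^{i-1} v_{\sigma(i+1)},\ldots, \alpha^{i-1} v_{\sigma(n)} \big)
\]
of $(\ref{hl1-id})$ and using $s^{-1}\circ s = \text{id}$ to telescope the suspension maps between the two nested operations, one is left with $l_j(l_i(a_{\sigma(1)},\ldots),\alpha^{i-1}a_{\sigma(i+1)},\ldots)$ up to a sign. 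The normalisation factors $(-1)^{i(i-1)/2}$ and $(-1)^{j(j-1)/2}$ together with the sign from reorganising the desuspension maps of the composite account for exactly the factor $(-1)^{i(j-1)}$ appearing in $(\ref{hl1-iden})$, while the décalage identity of the previous paragraph, applied unshuffle by unshuffle, converts each $\epsilon(\sigma)$ into the matching $\chi(\sigma)$; the intermediate $\alpha^{i-1}$'s pass through the suspension maps freely because $\alpha$ has degree $0$. Hence $(\ref{hl1-id})$ holds for $(V,\varrho_k,\alpha)$ for every $n$ if and only if $(\ref{hl1-iden})$ holds for $(L,l_k,\alpha)$ for every $n$; combined with the matching of the $\alpha$-compatibility and (skew-)symmetry conditions already established, this yields the asserted equivalence in both directions.

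The main obstacle is precisely this last sign reconciliation: one must verify the purely combinatorial Koszul identities that the suspension signs accumulated in passing between $\varrho_i,\varrho_j$ and $l_i,l_j$ produce exactly the factor $(-1)^{i(j-1)}$ of $(\ref{hl1-iden})$, and that the net sign attached to each $(i,n-i)$-unshuffle is $\chi(\sigma)$ rather than $\epsilon(\sigma)$. These identities are word for word the ones used in the untwisted $L_\infty \leftrightarrow L_\infty[1]$ comparison of \cite{lada-markl}; in the writeup I would either invoke that computation directly or reproduce the short induction on $k$, the $\alpha$-twist contributing nothing new.
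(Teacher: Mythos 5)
Your proposal is correct and follows essentially the same route the paper takes: the paper gives no separate argument for Proposition \ref{hl-hl1-prop}, simply defining $\varrho_k = (-1)^{k(k-1)/2} s \circ l_k \circ (s^{-1})^{\otimes k}$ and deferring the sign bookkeeping to the classical d\'ecalage computation of \cite{lada-markl}, exactly as you do, with the observation that the degree-$0$ map $\alpha$ passes through the (de)suspensions without introducing Koszul signs. Your write-up just makes explicit the degree count, the $\epsilon(\sigma)\leftrightarrow\chi(\sigma)$ conversion, and the origin of the factor $(-1)^{i(j-1)}$, which the paper leaves implicit.
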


\begin{remark}
	Note that Definition \ref{hl} is natural in the sense that the differential $l_1$ has degree $-1$, the product $l_2$ preserves the degree and $l_n$'s are higher homotopies, for $n \geq 3$. However, one can see that Definition \ref{hl1} is simpler in sign.
\end{remark}

\subsection{Coderivation interpretation}
Let $V$ be a graded vector space. We now consider the free graded commutative algebra $S V = \oplus_{k \geq 0} S^k (V)$ on $V$, where $S^k(V)$ is the quotient of $V^{\otimes k}$ by the subspace genetared by elements of the form $v_{\sigma (1)} \otimes \cdots \otimes v_{\sigma (k)} - \epsilon(\sigma) v_1 \otimes \cdots \otimes v_k$. We denote the induced product on $SV$ by $\wedge$. There is also a coproduct on $SV$ defined by
\begin{align*}
\triangle (v_1 \wedge \cdots \wedge v_n) &= (1) \otimes  (v_1 \wedge \cdots \wedge v_n) \\
& + \sum_{i=1}^{n-1} \sum_{\sigma \in Sh(i,n-i)}^{} \epsilon (\sigma)~ ( v_{\sigma(1)} \wedge \cdots \wedge  v_{\sigma (i)}) \otimes ( v_{\sigma (i+1)} \wedge \cdots \wedge v_{\sigma(n)} )\\
& + (v_1 \wedge \cdots \wedge v_n) \otimes (1) . 
\end{align*}

If $\alpha : V \rightarrow V$ is a linear map of degree $0$, it induces a coalgebra map (denoted by the same symbol) $\alpha : S V \rightarrow S V$ defined by
\begin{center}
$\alpha (v_1 \wedge \cdots \wedge v_n) = \alpha (v_1) \wedge \cdots \wedge \alpha (v_n)$.
\end{center}

\begin{lemma}\label{skew-coderivation-lemma}
 Let $V$ be a graded vector space and $\alpha : V \rightarrow V$ be a linear map of degree $0$. Let $\varrho : S^n(V) \rightarrow V$ be a linear map $($of degree $|\varrho|)$ satisfying $\alpha \circ \varrho = \varrho \circ \alpha^{\wedge n}$, that is,
\begin{center}	
	$ \alpha ( \varrho (v_1 \wedge \cdots \wedge v_n)) = \varrho (   \alpha (v_1) \wedge \cdots \wedge \alpha (v_n) ).$
	\end{center}
	View $\varrho$ as a linear map $\varrho : S V \rightarrow V$ by letting its only non-zero component being given by the original $\varrho$ on $S^n (V)$. Then $\varrho$ lifts uniquely to a map  $\widetilde{\varrho} : {S V} \rightarrow {S V}$ $($of degree $|\varrho|)$ satisfying $\alpha \circ \widetilde{\varrho} = \widetilde{\varrho} \circ \alpha$  and 
\begin{center}	
	$\triangle \circ \widetilde{\varrho} = (\widetilde{\varrho} \otimes \alpha^{n-1} + \alpha^{n-1} \otimes \widetilde{\varrho}) \circ \triangle .$
	\end{center}
	More precisely, $\widetilde{\varrho}$ is given by
$$\widetilde{\varrho} (v_1 \wedge \cdots \wedge v_k) := \begin{cases} 0,~~&\text{if }k < n,\\
\sum_{\sigma \in Sh(n, k-n)}~\epsilon(\sigma) ~\varrho(v_{\sigma (1)} \wedge \cdots \wedge v_{\sigma (n)}) \wedge \alpha^{n-1} v_{\sigma(n+1)} \wedge \cdots \wedge \alpha^{n-1} v_{\sigma(k)}, &\text{if }k \geq n. \end{cases} $$
\end{lemma}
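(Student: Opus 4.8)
The plan is to mirror the proof of Lemma \ref{lifting-lemma}, replacing the tensor coalgebra $TV$ by the cocommutative coalgebra $SV$ and ordered cuts by $(p,q)$-shuffles. First I would check that the displayed shuffle formula genuinely defines a map $\widetilde{\varrho} : SV \rightarrow SV$: one verifies it is compatible with the relations $v_{\sigma(1)} \wedge \cdots \wedge v_{\sigma(k)} - \epsilon(\sigma)\, v_1 \wedge \cdots \wedge v_k$ defining $S^k(V)$, so that it descends to the symmetric quotient, and that it lowers wedge-degree by $n-1$, i.e. sends $S^k(V)$ to $S^{k-n+1}(V)$ for $k \geq n$ and kills $S^k(V)$ for $k < n$. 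The identity $\alpha \circ \widetilde{\varrho} = \widetilde{\varrho} \circ \alpha$ is then immediate from $\alpha \circ \varrho = \varrho \circ \alpha^{\wedge n}$ together with the fact that $\alpha$ has degree $0$ and commutes with $\wedge$, hence does not affect any Koszul sign.

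Next I would verify the coderivation identity $\triangle \circ \widetilde{\varrho} = (\widetilde{\varrho} \otimes \alpha^{n-1} + \alpha^{n-1} \otimes \widetilde{\varrho}) \circ \triangle$ by evaluating both sides on a generator $v_1 \wedge \cdots \wedge v_k$. On the left one first applies $\widetilde{\varrho}$, producing a sum over $(n,k-n)$-shuffles, and then the coproduct of $SV$, which cuts the resulting word of length $k-n+1$ into two blocks. On the right one first cuts $v_1 \wedge \cdots \wedge v_k$ into two blocks via $\triangle$, then lets $\widetilde{\varrho}$ act on one block and $\alpha^{n-1}$ on the other. Matching the two expansions reduces to the standard combinatorial identity: shuffling $k$ letters into an $(n,k-n)$-split and then cutting the output into two pieces is the same operation as first cutting into two pieces and then shuffling $n$ of the letters within one piece, provided the Koszul signs $\epsilon(\sigma)$ and the extra sign $(-1)^{|\varrho|(|v_1|+\cdots)}$ produced when $\widetilde{\varrho}$ passes the $\alpha^{n-1}$-images of the preceding inputs are organised consistently. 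This sign bookkeeping through the nested shuffles is the step I expect to be the main obstacle; everything else is formal.

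For uniqueness I would argue exactly as in Lemma \ref{lifting-lemma}. Let $\overline{\varrho} : SV \rightarrow SV$ be another degree $|\varrho|$ lift of $\varrho$ that commutes with $\alpha$ and satisfies the coderivation identity, and write $\overline{\varrho} = \sum_{j \geq 0} \overline{\varrho}^{\, j}$ with $\overline{\varrho}^{\, j} : SV \rightarrow S^j(V)$. Applying $\triangle$ to $\overline{\varrho}(v_1 \wedge \cdots \wedge v_k)$ and projecting the coderivation identity onto the summand $S^i(V) \otimes S^j(V)$ of $(SV) \otimes (SV)$ expresses $\overline{\varrho}^{\, i}$ and $\overline{\varrho}^{\, j}$ in terms of one another; the normalisation that the only non-trivial $S^1(V)$-component of $\overline{\varrho}$ is the given $\varrho$ yields $\overline{\varrho}^{\, 1} = \varrho$ and $\overline{\varrho}^{\, 0} = 0$, and then an induction on $m$, taking the $(m-1,1)$-projection at each stage, determines $\overline{\varrho}^{\, m}$ uniquely from $\overline{\varrho}^{\, m-1}$ and $\overline{\varrho}^{\, 1}$. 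A short computation identifies the outcome with the $m$-th wedge-component of the explicit shuffle formula, so $\overline{\varrho} = \widetilde{\varrho}$. Since the whole construction is additive in $\varrho$, it extends verbatim to finite sums $\sum_n \varrho_n$, which is what is needed to set up coderivations on $SV$ and, via Proposition \ref{hl-hl1-prop}, to pass between $HL_\infty$-structures and square-zero coderivations; no idea beyond the cocommutative analogue of the argument for Lemma \ref{lifting-lemma} is required.
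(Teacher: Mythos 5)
Your proposal is correct and follows essentially the paper's route: the paper gives no separate proof for this lemma, relying on the evident analogy with Lemma \ref{lifting-lemma}, whose proof is exactly the pattern you describe (direct verification of the explicit formula, then uniqueness by projecting the coderivation identity onto components and inducting). Your adaptation to $SV$ with unshuffle coproduct and Koszul signs is the intended argument, so no further comparison is needed.
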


Let
\begin{center}
$\text{Coder}^p_\alpha (S V) := \{ D = \sum_{n \geq 1} \widetilde{\varrho_n} |~ \varrho_n : S^n V \rightarrow V \text{is a map of degree }p \text{ satisfying } \alpha \circ \varrho_n = \varrho_n \circ \alpha^{\otimes n} \}.$
\end{center}
One may also define a graded Lie algebra structure on $\text{Coder}^\bullet_\alpha (S V)$.
For $D = \sum_{n \geq 1} \widetilde{\varrho_n} \in \text{Coder}^p_\alpha (S V)$ and  $D' = \sum_{m \geq 1} \widetilde{\sigma_m} \in \text{Coder}^q_\alpha (S V)$, we define
$$D \circ D' = \sum_{n,m \geq 1} \widetilde{\varrho_n} \circ \widetilde{\sigma_m} \in \text{Coder}^{p+q}_\alpha (S V).$$
The graded Lie bracket on $\text{Coder}^\bullet_\alpha (S V)$ is then given by $[D,D'] = D \circ D' - (-1)^{pq} D' \circ D.$

\begin{prop}\label{coder-coder-zero-equiv-lie}
	Let $V$ be a graded vector space and $\alpha : V \rightarrow V$ be a linear map of degree $0$. Let $D = \sum_{n \geq 1} \widetilde{\varrho_n} \in Coder^{-1}_\alpha (S V)$. Then the condition $D \circ D = 0$
	is equivalent to the following equations:
	\begin{align*}
	\sum_{i + j = n + 1}^{} \sum_{\sigma \in Sh (i, n-i)}^{} \epsilon (\sigma)~ \varrho_{j} \big(  \varrho_i (v_{\sigma(1)}, \ldots, v_{\sigma (i)}), \alpha^{i-1} v_{\sigma(i+1)}, \ldots, \alpha^{i-1} v_{\sigma(n)}   \big) = 0, ~ \text{for all }n \geq 1.
	\end{align*}
\end{prop}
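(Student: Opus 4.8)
The plan is to mimic the proof of Proposition \ref{coder-coder-zero-equiv} almost line for line, replacing Lemma \ref{lifting-lemma} by its symmetric counterpart Lemma \ref{skew-coderivation-lemma}.

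First I would split $D\circ D$ into pieces that are homogeneous for the symmetric (word-length) grading. Since $\widetilde{\varrho_j}$ sends $S^j(V)\wedge\cdots$ of length $k$ into length $k-j+1$ and annihilates words of length $<j$, the composite $\widetilde{\varrho_i}\circ\widetilde{\varrho_j}$ lowers the symmetric degree uniformly by $i+j-2$. Collecting the terms of $D\circ D=\sum_{i,j\geq 1}\widetilde{\varrho_i}\circ\widetilde{\varrho_j}$ according to this shift gives $D\circ D=\sum_{n\geq 1}\Psi^{(n)}$ with $\Psi^{(n)}:=\sum_{i+j=n+1}\widetilde{\varrho_i}\circ\widetilde{\varrho_j}$. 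On any $S^k(V)$ the maps $\Psi^{(n)}$ take values in the pairwise distinct summands $S^{k-n+1}(V)$, so $D\circ D=0$ is equivalent to $\Psi^{(n)}=0$ for every $n$. Because $\text{Coder}^\bullet_\alpha(SV)$ is closed under $\circ$ (as recalled just before the statement), $D\circ D$ lies in $\text{Coder}^{-2}_\alpha(SV)$, hence is of the form $\sum_n\widetilde{\eta_n}$ for maps $\eta_n\colon S^n(V)\to V$ of degree $-2$; matching symmetric degrees gives $\Psi^{(n)}=\widetilde{\eta_n}$, and by the uniqueness clause of Lemma \ref{skew-coderivation-lemma} one then has $\Psi^{(n)}=0$ if and only if $\eta_n=0$, where $\eta_n=\pr_V\circ\,\Psi^{(n)}|_{S^n(V)}$.

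The remaining step is to identify $\eta_n$ explicitly. Applying the formula of Lemma \ref{skew-coderivation-lemma} to $\widetilde{\varrho_j}$ on $v_1\wedge\cdots\wedge v_n$ produces a sum over $(j,n-j)$-shuffles $\tau$ of $\epsilon(\tau)\,\varrho_j(v_{\tau(1)},\dots,v_{\tau(j)})\wedge\alpha^{j-1}v_{\tau(j+1)}\wedge\cdots\wedge\alpha^{j-1}v_{\tau(n)}$, which for $i+j=n+1$ is a decomposable of exactly $i$ factors; applying $\widetilde{\varrho_i}$ to a decomposable of length $i$ involves only the trivial shuffle and leaves no surviving $\alpha$-twisted tail, so it returns simply $\varrho_i$ evaluated on those $i$ entries, in order. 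Hence $\eta_n(v_1\wedge\cdots\wedge v_n)=\sum_{i+j=n+1}\sum_{\tau\in Sh(j,n-j)}\epsilon(\tau)\,\varrho_i\bigl(\varrho_j(v_{\tau(1)},\dots,v_{\tau(j)}),\alpha^{j-1}v_{\tau(j+1)},\dots,\alpha^{j-1}v_{\tau(n)}\bigr)$, and relabelling the summation variables $i\leftrightarrow j$ (the outer sum being symmetric in the pair) turns this into exactly the left-hand side of the identity (\ref{hl1-id}) appearing in the statement. Thus $\eta_n=0$ for all $n$ is precisely the asserted system of equations, which proves the proposition.

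I expect the only delicate point to be the sign bookkeeping: checking that inserting the odd-degree element $\varrho_j(v_{\tau(1)},\dots,v_{\tau(j)})$ into the first argument of $\varrho_i$ introduces no Koszul sign (it does not, as it occupies the first slot and nothing is transposed past it), and that the shuffle signs $\epsilon(\tau)$ delivered by Lemma \ref{skew-coderivation-lemma} are literally the signs $\epsilon(\sigma)$ occurring in (\ref{hl1-id}), so that no further rearrangement is required. The one structural subtlety is the equivalence ``$\Psi^{(n)}=0\iff\eta_n=0$'', which rests on the uniqueness part of Lemma \ref{skew-coderivation-lemma}, i.e. on $\Psi^{(n)}$ genuinely being a twisted coderivation lifted from $V$; this is exactly what the closure of $\text{Coder}^\bullet_\alpha(SV)$ under composition supplies.
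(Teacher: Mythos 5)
Your proof is correct and follows essentially the paper's approach: the paper states this proposition without a separate proof, and its proof of the associative counterpart (Proposition \ref{coder-coder-zero-equiv}) is exactly your decomposition of $D \circ D$ into the word-length-homogeneous pieces $\sum_{i+j=n+1} \widetilde{\varrho_i} \circ \widetilde{\varrho_j}$, whose $V$-valued components on $S^n(V)$ are read off from the formula in Lemma \ref{skew-coderivation-lemma} and give precisely the displayed relations. Your additional step --- identifying each homogeneous piece as a lift and invoking the uniqueness clause of Lemma \ref{skew-coderivation-lemma} so that vanishing of the corestriction forces vanishing of the whole piece --- simply makes explicit the converse direction (relations $\Rightarrow D \circ D = 0$) that the paper leaves implicit, and your sign and shuffle bookkeeping, after the $i \leftrightarrow j$ relabelling, matches the statement.
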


\medskip

In view of Proposition \ref{hl-hl1-prop} and Proposition \ref{coder-coder-zero-equiv-lie}, we get the following result.
\begin{thm}\label{hl-coder-thm}
	An $HL_\infty$-algebra structure on a graded vector space $L$ with respect to a linear map $\alpha : L \rightarrow L$ of degree $0$ is equivalent to an element $D \in Coder^{-1}_\alpha (S V)$ with $D \circ D = 0$, where $ V = sL$.
\end{thm}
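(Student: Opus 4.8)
The plan is to obtain this exactly as Theorem~\ref{coder-ha-inf} was obtained in the associative case, namely by chaining together the two immediately preceding results. First I would reduce to the degree-shifted picture: by Proposition~\ref{hl-hl1-prop}, giving an $HL_\infty$-structure $(L, l_k, \alpha)$ is the same as giving an $HL_\infty[1]$-structure $(V, \varrho_k, \alpha)$ on $V = sL$, with the two families of maps related by $\varrho_k = (-1)^{\frac{k(k-1)}{2}}\, s \circ l_k \circ (s^{-1})^{\otimes k}$ and $l_k = s^{-1} \circ \varrho_k \circ s^{\otimes k}$. Hence it suffices to establish the equivalence with ``$HL_\infty$-structure on $L$'' replaced by ``$HL_\infty[1]$-structure on $V$''.

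Next I would translate an $HL_\infty[1]$-structure on $V$ into coderivation language. Each $\varrho_k$ is a symmetric map $V^{\otimes k}\to V$ of degree $-1$ commuting with $\alpha$ in the sense $\alpha \circ \varrho_k = \varrho_k \circ \alpha^{\wedge k}$, so it factors uniquely through $S^k V$; by Lemma~\ref{skew-coderivation-lemma} it then lifts uniquely to $\widetilde{\varrho_k} \in \text{Coder}^{-1}_\alpha(SV)$, and $D := \sum_{k \geq 1} \widetilde{\varrho_k}$ is an element of $\text{Coder}^{-1}_\alpha(SV)$. Conversely, by definition every $D \in \text{Coder}^{-1}_\alpha(SV)$ is of this form for a unique family $\{\varrho_k\}$ of symmetric degree $-1$ maps $S^k V \to V$ commuting with $\alpha$. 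This gives a bijection between families $\{\varrho_k\}$ as in Definition~\ref{hl1} (forgetting the defining identities) and elements of $\text{Coder}^{-1}_\alpha(SV)$.

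Finally I would invoke Proposition~\ref{coder-coder-zero-equiv-lie}: under this bijection, $D \circ D = 0$ holds if and only if the equations (\ref{hl1-id}) hold for all $n \geq 1$, i.e.\ if and only if $\{\varrho_k\}$ is an $HL_\infty[1]$-structure. Composing with the reduction of the first paragraph yields the claimed equivalence between $HL_\infty$-structures on $L$ and square-zero elements of $\text{Coder}^{-1}_\alpha(SV)$, and the correspondence is visibly the composite of the suspension of Proposition~\ref{hl-hl1-prop} with the lifting of Lemma~\ref{skew-coderivation-lemma}, hence a genuine inverse bijection.

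The only substantive bookkeeping is already carried out in the cited results: the Koszul signs in Proposition~\ref{hl-hl1-prop} matching the skew-symmetry and $\chi(\sigma)(-1)^{i(j-1)}$ signs of (\ref{hl1-iden}) against the symmetry and $\epsilon(\sigma)$ signs of (\ref{hl1-id}), and the uniqueness clause of Lemma~\ref{skew-coderivation-lemma} making ``$D \leftrightarrow \{\varrho_k\}$'' a bijection rather than a mere surjection. I expect the main (mild) obstacle to be verifying that a symmetric degree $-1$ map $V^{\otimes n} \to V$ commuting with $\alpha$ is precisely the same datum as the component maps appearing in the definition of $\text{Coder}^{-1}_\alpha(SV)$, so that no information is lost passing back and forth; every other step is a direct appeal to Propositions~\ref{hl-hl1-prop} and~\ref{coder-coder-zero-equiv-lie}.
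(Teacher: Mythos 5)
Your proposal is correct and is essentially identical to the paper's argument: the paper derives Theorem \ref{hl-coder-thm} precisely by combining Proposition \ref{hl-hl1-prop} (the suspension $L \leftrightarrow V = sL$) with Proposition \ref{coder-coder-zero-equiv-lie} (whose lifting of symmetric maps $S^n V \to V$ to coderivations is Lemma \ref{skew-coderivation-lemma}), exactly as you do. The only point you flag as a possible obstacle is already built into the paper's definition of $\text{Coder}^{-1}_\alpha(SV)$, whose components are by definition maps $S^n V \to V$, i.e.\ symmetric maps commuting with $\alpha$, so nothing further needs checking.
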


It is known that the graded commutator of a differential graded hom-associative algebra gives rise to a differential graded hom-Lie algebra. More precisely, let $(A, \mu, \alpha, d)$ be a differential graded hom-associative algebra. Then $(A, [-,-], \alpha, d)$ is a differential graded hom-Lie algebra, where
\begin{center}
$[a, b] = \mu( a \otimes b) - (-1)^{|a||b|} \mu(b \otimes a)$
\end{center}
is the usual graded commutator. A more general theorem for strongly homotopy algebras is given by the following.

\begin{thm}\label{ha-comm-hl}
	Let $(A, \mu_k, \alpha)$ be an $HA_\infty$-algebra. Then $(A, l_k, \alpha)$ forms an $HL_\infty$-algebra, where the maps $l_k : \otimes^k A \rightarrow A$ are given by
\begin{center}	
	$ l_k (a_1, \ldots, a_k) = \sum_{\sigma \in S_k} \chi (\sigma) ~ \mu_k (a_{\sigma (1)}, \ldots, a_{\sigma (k)}),$
	\end{center}
	for $a_1, \ldots, a_k \in A$ and $1 \leq k < \infty.$ Here, $l_k$'s are appropriate skew-symmetrization of $\mu_k$'s.
\end{thm}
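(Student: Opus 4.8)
The plan is to reduce the claim to the coderivation descriptions of Theorems~\ref{coder-ha-inf} and~\ref{hl-coder-thm} and to transport the square‑zero condition along a canonical symmetrization map. First I would pass to the shifted space $V = sA$: by Proposition~\ref{ha-ha1-prop} the $HA_\infty$-algebra $(A,\mu_k,\alpha)$ corresponds to a square‑zero element $D = \sum_{n\ge 1}\widetilde{\varrho_n}\in\text{Coder}^{-1}_\alpha(TV)$, where $\varrho_k = (-1)^{k(k-1)/2}\,s\circ\mu_k\circ(s^{-1})^{\otimes k}$. For each $n$ let $\overline{\varrho_n}\colon S^nV\to V$ be the Koszul‑signed symmetrization
\begin{center}
$\overline{\varrho_n}(v_1\wedge\cdots\wedge v_n) = \sum_{\sigma\in S_n}\epsilon(\sigma)\,\varrho_n(v_{\sigma(1)},\ldots,v_{\sigma(n)}).$
\end{center}
Because $\alpha$ has degree $0$ the Koszul signs are insensitive to $\alpha$, so $\alpha\circ\overline{\varrho_n} = \overline{\varrho_n}\circ\alpha^{\wedge n}$ and, by Lemma~\ref{skew-coderivation-lemma}, $\overline D := \sum_{n\ge 1}\widetilde{\overline{\varrho_n}}$ is a well‑defined element of $\text{Coder}^{-1}_\alpha(SV)$. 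A routine d\'ecalage computation with the sign conventions shows that, under the inverse shift, each $\overline{\varrho_k}$ corresponds precisely to the skew‑symmetrized map $l_k$ of the statement (this is the role of the factor $(-1)^{k(k-1)/2}$; see~\cite{lada-markl}). Hence, by Theorem~\ref{hl-coder-thm} and Proposition~\ref{hl-hl1-prop}, it suffices to prove $\overline D\circ\overline D = 0$.

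The key device is the symmetrization map $\iota\colon SV\to TV$,
\begin{center}
$\iota(v_1\wedge\cdots\wedge v_n) = \sum_{\sigma\in S_n}\epsilon(\sigma)\,v_{\sigma(1)}\otimes\cdots\otimes v_{\sigma(n)},\qquad \iota(1)=1.$
\end{center}
Factoring an arbitrary permutation of $\{1,\ldots,n\}$ as an $(i,n-i)$-unshuffle followed by a permutation of each of the two blocks, and using multiplicativity of the Koszul sign, one checks that $\iota$ is an injective morphism of graded coalgebras $(SV,\triangle)\to(TV,\triangle)$ that also commutes with $\alpha$. The heart of the argument is the component‑wise identity $\widetilde{\varrho_m}\circ\iota = \iota\circ\widetilde{\overline{\varrho_m}}$ for every $m$, whence $D\circ\iota = \iota\circ\overline D$. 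To prove it one applies $\widetilde{\varrho_m}$ to $\iota(v_1\wedge\cdots\wedge v_k)$: inserting $\varrho_m$ into every admissible slot and summing over $S_k$ with the signs $\epsilon(\sigma)$, one regroups the resulting sum according to the $m$-element subset fed into $\varrho_m$, its internal ordering (which reassembles $\varrho_m$ into $\overline{\varrho_m}$), and the ordering of the complementary block; the factors $\alpha^{m-1}$ on the untouched slots pass freely through $\iota$ since $\alpha\circ\iota=\iota\circ\alpha$, and what remains is exactly $\iota$ applied to $\widetilde{\overline{\varrho_m}}(v_1\wedge\cdots\wedge v_k)$ in the form given by Lemma~\ref{skew-coderivation-lemma}. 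Granting this and $D\circ D=0$ (Theorem~\ref{coder-ha-inf}),
\begin{center}
$\iota\circ(\overline D\circ\overline D) = (D\circ\iota)\circ\overline D = D\circ(D\circ\iota) = (D\circ D)\circ\iota = 0,$
\end{center}
and injectivity of $\iota$ forces $\overline D\circ\overline D = 0$. It then follows that $(A,l_k,\alpha)$ is an $HL_\infty$-algebra, the skew‑symmetry of each $l_k$ being immediate from its definition.

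The main obstacle is the verification of $\widetilde{\varrho_m}\circ\iota = \iota\circ\widetilde{\overline{\varrho_m}}$. Although conceptually it only asserts that insertion operators commute with total symmetrization, it is a finite but sign‑heavy check: one must keep track of the Koszul signs $\epsilon(\sigma)$ coming from permuting the graded arguments $v_i$, of the internal signs in the definition of $\widetilde{\varrho_m}$ (Lemma~\ref{lifting-lemma}), and of the interplay between the $\alpha^{m-1}$ prefactors and the unshuffle/permutation decomposition of $S_k$. An alternative route that avoids $\iota$ entirely is to expand the $HL_\infty[1]$ relations~\eqref{hl1-id} for the maps $\overline{\varrho_k}$ directly and rewrite the left‑hand side as a sum over $(i,n-i)$-unshuffles of instances of the $HA_\infty[1]$ relations from Proposition~\ref{coder-coder-zero-equiv}; after the shift this reduces to matching the signs $\chi(\sigma)$, $\epsilon(\sigma)$ and the factor $(-1)^{i(j-1)}$ of~\eqref{hl1-iden} against standard Koszul‑sign identities, which is again purely mechanical.
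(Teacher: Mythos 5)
Your proposal is correct and follows essentially the same route as the paper: the paper also introduces the symmetrization coalgebra map $Q\colon SV\to TV$ (your $\iota$), shows $D\circ Q = Q\circ \mathbf{D}$ for the induced $\mathbf{D}\in \text{Coder}^{-1}_\alpha(SV)$, and transports $D\circ D=0$ across it. The only (minor) difference is in execution: the paper avoids your ``sign-heavy'' direct verification of the intertwining identity by noting that both $Q\circ\mathbf{D}$ and $D\circ Q$ are coderivations along $Q$, hence determined by their corestrictions to $V$, and it concludes from $\pi'\circ\mathbf{D}\circ\mathbf{D}=0$ via the same uniqueness principle rather than from injectivity of $Q$ --- both of which are legitimate and slightly cleaner than the combinatorial check you sketch.
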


\begin{proof}
Our proof is similar to the proof of Lada and Markl \cite{lada-markl}.
Let the $HA_\infty$-algebra $(A, \mu_k, \alpha)$ is given by the coderivation $D \in \text{Coder}^{-1}_\alpha (TV)$, where $V = sA$. Then we have $D \circ D = 0$.

Consider the injective coalgebra map $Q : S V \rightarrow T V$ given by
\begin{center}
$ Q (v_1 \wedge \cdots \wedge v_n) = \sum_{\sigma \in S_n}^{} \epsilon (\sigma) (v_{\sigma (1)} \otimes \cdots \otimes v_{\sigma(n)}).$
\end{center}
Next, consider the linear map $\pi \circ D \circ Q : {S V} \rightarrow V$, where $\pi : TV \rightarrow V$ is the projection. Then by Lemma \ref{skew-coderivation-lemma}, we can extend it to a unique map ${\bf D} = \widetilde{\pi \circ D \circ Q} : SV \rightarrow SV$ such that ${\bf D} \in \text{Coder}^{-1}_\alpha (SV)$ and $\pi' \circ {\bf D} = \pi \circ D \circ Q$, where $\pi' : SV \rightarrow V$ is the projection. It is straightforward to verify that ${\bf D} = \sum_{k \geq 1} \widetilde{\varrho_k}$, where $\varrho_k$'s are defined from $l_k$'s as in (\ref{hl-hl1-map}). Thus, it remains to show that
${\bf D} \circ {\bf D} = 0$.

First we claim that $Q \circ {\bf D} = D \circ Q$. Since $Q$ is a coalgebra map, both $Q \circ {\bf D}$  and $D \circ Q$ are in $\text{Coder}^{-1}_\alpha ( S V, T V )$. Therefore, we only need to examine $ \pi \circ Q \circ {\bf D} = \pi \circ D \circ Q$.
Observe that $\pi \circ Q = \pi'$, hence, $\pi \circ Q \circ {\bf D} = \pi' \circ {\bf D}$. On the other hand, $\pi \circ D \circ Q = \pi' \circ {\bf D}$, by the definition of ${\bf D}$. Hence, we have $Q \circ {\bf D} = D Q$.

Thus, $\pi' \circ {\bf D} \circ {\bf D} = (\pi' \circ {\bf D}) \circ {\bf D} = (\pi \circ D  \circ Q) \circ {\bf D} = \pi \circ D \circ D \circ Q = 0$. Since, ${\bf D} \circ {\bf D} : {S V} \rightarrow {S V}$ is in $\text{Coder}^{-2}_\alpha (SV)$ and $\pi' \circ {\bf D} \circ {\bf D} = 0$, it follows that ${\bf D} \circ {\bf D} = 0$. Hence, the proof.
\end{proof}

\subsection{Cohomology and deformations}
\begin{defn}
	Let $(L, l_k, \alpha)$ be a $HL_\infty$-algebra. An $HL_\infty$-module is a graded vector space $M$ together with
	\begin{itemize}
		\item a collection $\{ \eta_k | ~ 1 \leq k < \infty \}$ of linear maps $\eta_k : \otimes^{k-1} L \otimes M \rightarrow M$ with deg $(\eta_k) = k-2,$
		\item a linear map $ \beta : M \rightarrow M$ of degree $0$ with
\begin{center}		
$ \beta \big( \eta_k (a_1, \ldots, a_{k-1}, m) \big) = \eta_k \big(  \alpha(a_1), \ldots, \alpha(a_{k-1}), \beta (m) \big)$
\end{center}
	\end{itemize}
such that for all $n \geq 1$,
	\begin{align}
		  \sum_{i+j= n+1}^{} \sum_{\sigma \in Sh (i, n-i)}^{} \chi (\sigma) (-1)^{i (j-1)}~ \eta_j \big(   \eta_i (a_{\sigma (1)}, \ldots, a_{\sigma (i)}) , \gamma^{i-1} a_{\sigma (i+1)}, \ldots, \gamma^{i-1} a_{\sigma (n)} \big) = 0 ,
	\end{align}
for $a_1, \ldots, a_{n-1} \in L$ and $a_n \in M$.
\end{defn}

Note that in the above definition we use the following conventions. For any $\sigma \in Sh (i, n-i),$ we have either $a_{\sigma (i)} = a_n$ or $a_{\sigma (n)} = a_n$. In the first case, we define
\begin{align*}
\eta_j \big(  \eta_i &(a_{\sigma (1)}, \ldots , a_{\sigma (i)}) , \gamma^{i-1} a_{\sigma (i+1)}, \ldots, \gamma^{i-1} a_{\sigma (n)} \big) \\
=~& (-1)^{j-1 + (i + \sum_{k=1}^{i} |a_{\sigma(k)}|)(\sum_{k=i+1}^{n} |a_{\sigma(k)}|)}~
\eta_j \big(  \alpha^{i-1} a_{\sigma (i+1)}, \ldots, \alpha^{i-1} a_{\sigma (n)} , \eta_i  (a_{\sigma (1)}, \ldots , a_{\sigma (i)}) \big).
\end{align*}
In the second case, we define
\begin{align*}
\eta_j \big(   \eta_i (a_{\sigma (1)}, \ldots, a_{\sigma (i)}) , \gamma^{i-1} a_{\sigma (i+1)}, \ldots, \gamma^{i-1} a_{\sigma (n)} \big) = \eta_j \big(   l_i (a_{\sigma (1)}, \ldots, a_{\sigma (i)}) , \alpha^{i-1} a_{\sigma (i+1)}, \ldots, \beta^{i-1} a_{\sigma (n)} \big).
\end{align*}

It is easy to observe that $M = L$ is an $HL_\infty$-module over itself with $\eta_k = l_k$ and $\beta = \alpha$. We now define the Chevalley-Eilenberg cohomology of an $HL_\infty$-algebra with coefficients in itself.

Let $(L, l_k, \alpha)$ be an $HL_\infty$-algebra with the corresponding coderivation is given by $D \in \text{Coder}^{-1}_\alpha{(S V)}$. The Chevalley-Eilenberg cochain complex of $L$ with coefficients in itself is given by $\big(  C^\bullet_\alpha (L,L), \delta_\alpha \big),$ where  $C^\bullet_\alpha (L,L) = \text{Coder}^{-(\bullet -1)}_\alpha (S V)$ and
\begin{center}
$ \delta_\alpha (-) := [D, -].$
\end{center}
Then $D \circ D = 0$ implies that $\delta_\alpha^2 = 0$. The cohomology groups are denoted by $H^\bullet_\alpha (L, L)$.

\medskip

Like $HA_\infty$-algebra case, one may also study the deformation of $HL_\infty$-algebras. Let $(L, l_k, \alpha)$ be an $HL_\infty$-algebra with the corresponding coderivation is given by $D \in \text{Coder}^{-1}_\alpha{(S V)}$.
A deformation of $L$ is given by a formal sum
\begin{center}
$D_t = D + tD_1 + t^2 D_2 + \cdots \in \text{Coder}^{-1}_\alpha (S V)[[t]]$
\end{center}
such that $D_t \circ D_t = 0$.
Similar to the case of $HA_\infty$-algebra, one can show that $H^2_\alpha (L,L) = 0$ implies the rigidity of the $HL_\infty$-structure and $H^3_\alpha (L,L)$ obstruct the extension of a deformation of order $n$ to a deformation of order $n+1$.

\begin{remark}
In \cite{fial-pen}, the authors consider a more general deformation of homotopy algebras $(A_\infty, L_\infty,\ldots)$ whose deformation parameter lies in a local algebra. Inspired from the classical deformations (associative, Lie,$\ldots$), they also considered miniversal deformation of homotopy algebras. In our context, one may also study miniversal deformation of $HA_\infty$- and $HL_\infty$-algebras.
\end{remark}

\section{Appendix}\label{sec8}
In this appendix, we introduce some generalizations of $HA_\infty$-algebras and $HL_\infty$-algebras.

\subsection{$HA_\infty$-category}
An $A_\infty$-category is a kind of category (but not a true category) in which the composition of morphisms is associative up to homotopy (see \cite{kell} for instance).
$A_\infty$-categories are category theoretical model of $A_\infty$-algebras. In this appendix, we introduce $HA_\infty$-category as a categorical model of $HA_\infty$-algebras.

\begin{defn}
An $HA_\infty$-category $\mathcal{A}$ consists of

$\bullet$ a class of objects {\sf obj}($\mathcal{A}$),

$\bullet$ for all $A, B \in$ {\sf obj}($\mathcal{A}$), there is a graded vector space $\text{Hom}_{\mathcal{A}} (A, B)$ and a linear map $$\alpha_{A, B} : \text{Hom}_{\mathcal{A}} (A, B) \rightarrow \text{Hom}_{\mathcal{A}} (A, B)$$
(also denoted by $\alpha$ when there is no confusion) of degree $0$,

$\bullet$ for all $k \geq 1$ and $A_0, A_1, \ldots, A_k \in$ {\sf obj}($\mathcal{A}$), a graded composition map
\begin{center}
$\mu_k : \text{Hom}_{\mathcal{A}} (A_{k-1}, A_k) \otimes \text{Hom}_{\mathcal{A}} (A_{k-2}, A_{k-1}) \otimes \cdots \otimes \text{Hom}_{\mathcal{A}} (A_0, A_1) \rightarrow \text{Hom}_{\mathcal{A}} (A_0, A_k)$
\end{center}
of degree $k-2$ satisfying 
\begin{center}
 $\alpha \circ \mu_k = \mu_k \circ \alpha^{\otimes k}$, for all $k \geq 1$,
\end{center}
and for all $n \geq 1$ and objects $A_0, A_1, \ldots, A_n \in$ {\sf obj}($\mathcal{A}$),
\begin{align*}
\sum_{i+j = n+1}^{} \sum_{\lambda =1}^{j} (-1)^{\lambda (i+1) + i (|a_1| + \cdots + |a_{\lambda -1 }|)} ~ \mu_{j} \big(  \alpha^{i-1}a_1, \ldots, \alpha^{i-1} a_{\lambda -1}, \mu_i ( a_{\lambda}, \ldots, a_{\lambda + i-1}), \ldots, \alpha^{i-1} a_n   \big) = 0,
\end{align*}
for $a_i \in \text{Hom}_{\mathcal{A}} (A_{n-i}, A_{n-i+1}),~ i=1, \ldots, n.$
\end{defn}

It is easy to see that if $\mathcal{A}$ is an $HA_\infty$-category with one object $\{ \star \}$, then the graded vector space $\text{Hom}_{\mathcal{A}} (\star, \star)$ inherits an $HA_\infty$-algebra structure. Therefore, an $HA_\infty$-category is a categorical model of an $HA_\infty$-algebra. Another class of examples are given by dg hom-categories in which $\mu_k = 0$, for all $k \geq 3$.

When $\alpha =$identity, one gets the definition of an $A_\infty$-category. Let $(\mathcal{A}, \mu_k)$ be an $A_\infty$-category and for all $A, B \in ${\sf obj}($\mathcal{A}$), there is a map
\begin{center}
$	\alpha : \text{Hom}_{\mathcal{A}} (A, B) \rightarrow \text{Hom}_{\mathcal{A}} (A, B)$
\end{center}
satisfying $ \alpha \circ \mu_k = \mu_k \circ \alpha^{\otimes k}$, for all $k \geq 1$. Such an $\alpha$ is called a strict functor. If $\alpha$ is a strict functor, then $(A, \alpha^{k-1} \circ \mu_k, \alpha)$ is an $HA_\infty$-category.

The categorical model of an $HA_\infty$-algebra morphism is given by $HA_\infty$-functor which can be define similarly.

\begin{remark}
Since $HA_\infty$-category is a category theoretical model of an $HA_\infty$-algebra, it would be interesting to extend some properties (like homotopy transfer theorem) of $HA_\infty$-algebra to $HA_\infty$-category. We also hope that there might exists an appropriate version of the Hochschild cohomology theory for some `good' $HA_\infty$-categories.
\end{remark}

\subsection{$HA_\infty$-coalgebras}
Like coalgebras are dual to algebras, hom-coalgebras are dual to hom-algebras \cite{makh-sil3}.
A hom-coalgebra is a triple $(A, \triangle, \alpha)$ consists of a vector space $A$ together with a linear map $\triangle : A \rightarrow A \otimes A$ and a linear map $\alpha : A \rightarrow A$ satisfying $\triangle \circ \alpha = \alpha^{\otimes 2} \circ \triangle$ and the following hom-coassociative identity
\begin{center}
$(\triangle \otimes \alpha) \circ \triangle = (\alpha \otimes \triangle) \circ \triangle.$
\end{center}
Given a coalgebra $(A, \triangle)$ and a coalgebra map $\alpha : A \rightarrow A$, one can associate a hom-coalgebra $(A, \triangle \circ \alpha , \alpha)$. The notions of graded hom-coalgebra and DG hom-coalgebra can be define similarly. Now, we introduce $HA_\infty$-coalgebra as a more general object where the hom-coassociative identity holds up to certain homotopy.

\begin{defn}
An $HA_\infty$-coalgebra is a graded vector space $A = \oplus A_i$ together with 

$\bullet$ a collection of maps $\triangle_k : A \rightarrow A^{\otimes k}$ of degree $k-2$, for $k \geq 1$,

$\bullet$ a linear map $\alpha : V \rightarrow V$ of degree $0$ satisfying 
\begin{center}
 $\alpha^{\otimes k} \circ \triangle_k =\triangle_k \circ \alpha,~~~$ for all $k \geq 1$,
 \end{center}
and for all $n \geq 1$,
$$\sum_{r + s+ t = n, r, t \geq 0, s \geq 1} (-1)^{rs + t}~ \big( (\alpha^{s-1})^{\otimes r} \otimes \triangle_s \otimes (\alpha^{s-1})^{\otimes t}   \big) \circ \triangle_{r+1+t} = 0.$$
\end{defn}

For $n=1$, it follows that $\triangle_1$ is a differential. For $n = 2$, we get that the differential $\triangle_1$ satisfies the co-Leibniz rule
$$\triangle_2 \circ \triangle_1 = (\triangle_1 \otimes \text{id}) \circ \triangle_2 + (\text{id} \otimes \triangle_1) \circ \triangle_2$$
with respect to the coproduct $\triangle_2$. For $n=3$, it follows that the coproduct $\triangle_2$ is hom-coassociative up to certain homotopy. Similarly, for higher $n$, we get higher coherence laws that $\triangle_k$'s must satisfy. It follows that an $HA_\infty$-coalgebra $(V, \triangle_k, \alpha)$ with $\triangle_k = 0$ for $k \geq 2$, is a DG hom-coalgebra.

The definition of morphism between $HA_\infty$-coalgebras can be define similarly. Finally, it would be interesting to prove homotopy transfer theorems for $HA_\infty$-coalgebras. One may also study $2$-term $HA_\infty$-coalgebras and prove results analogous to Subsections \ref{subsec-skeletal} and \ref{subsec-strict}. We hope that a categorical version of hom-coalgebras might be related to $2$-term $HA_\infty$-coalgebras.

\subsection{$HP_\infty$-algebras}
The notion of strongly homotopy Poisson algebras ($P_\infty$-algebras in short) was introduced by Cattaneo and Felder to study deformation of coisotropic submanifolds of a Poisson manifold \cite{cat-fel}. We now introduce strongly homotopy hom-Poisson algebras (or $HP_\infty$-algebras in short).

\begin{defn}
An $HP_\infty$-algebra is an $HL_\infty$-algebra $(L, l_k, \alpha)$ equipped with a graded commutative, hom-associative multiplication
\begin{center}
$\mu : L \otimes L \rightarrow L, ~~(a,b) \mapsto a \cdot b$
\end{center}
of degree $0$ such that each $l_k : L^{\otimes k} \rightarrow L$, $k \geq 1$, satisfies the following graded hom-Leibniz rule
\begin{align*}
l_k \big( \alpha (a_1), \ldots, \alpha (a_{k-1}), b \cdot c \big) =~& l_k (a_1, \ldots, a_{k-1}, b) \cdot \alpha(c) \\
&+ (-1)^{(k-2 + |a_1|+ \cdots + |a_{k-1}|)|b|} \alpha (b) \cdot l_k (a_1, \ldots, a_{k-1}, c),
\end{align*}
for all $a_1, \ldots, a_{k-1}, b, c \in L$.
\end{defn}
 An $HP_\infty$-algebra as above is denoted by $(L, l_k, \mu, \alpha).$ When $\alpha =$~identity, we recover $P_\infty$-algebras introduced in \cite{cat-fel}. When $L$ is a vector space (considered as a graded vector space concentrated in degree $0$), we get hom-Poisson algebras \cite{makh-sil}.

Given a $P_\infty$-algebra $(L, l_k, \mu)$ and a strict $P_\infty$-algebra morphism $\alpha$, one can define an $HP_\infty$-algebra $(L, \alpha^{k-1} \circ l_k, \alpha \circ \mu, \alpha)$.

\begin{remark}
Note that, in the above definition of an $HP_\infty$-algebra, we have assumed that the multiplication $\mu$ is hom-associative. In a more general notion of $HP_\infty$-algebra, one expects that the multiplication is hom-associative up to homotopy, in other words, the underlying graded vector space $L$ inherits an $HA_\infty$-algebra structure $(L, \mu_k, \alpha)$ as well. The graded hom-Leibniz rule can easily extend (for higher $\mu_k$'s) by using the Koszul sign convention.
\end{remark}
\noindent {\bf Acknowledgement.} The research was supported by the Institute post-doctoral fellowship of Indian Statistical Institute Kolkata. The author would like to thank the Institute for their support.


\begin{thebibliography}{BFGM03}
\bibitem{alo-chat}
W. Aloulou and R. Chatbouri, Alg\`ebres Hom-Gerstenhaber \`a homotopie pr\`es, (in French), {\em Bull. Sci. Math.} 140 (2016), no. 1, 36-63.

\bibitem{amm-ej-makh}
F. Ammar, Z. Ejbehi and A. Makhlouf, Cohomology and Deformations of Hom-algebras, {\em J. Lie Theory} 21 (2011), no. 4, 813-836. 

\bibitem{baez-crans}
J. C. Baez and A. S. Crans, Higher-dimensional algebra. VI. Lie $2$-algebras, {\em Theory Appl. Categ.} 12 (2004) 492-538.

\bibitem{cat-fel}
A. S. Cattaneo and G. Felder, Relative formality theorem and quantisation of coisotropic submanifolds, {\it Adv. Math.} 208 (2007), no. 2, 521-548.

\bibitem{das}
A. Das, Gerstenhaber algebra structure on the cohomology of a hom-associative algebra, {arXiv:1805.01207}

\bibitem{das2}
A. Das, Gerstenhaber structure on the cohomology of hom-type algebras, {arXiv:1808.01464}

\bibitem{fial-pen}
A. Fialowski and M. Penkava, Deformation theory of infinity algebras, {\em J. Algebra} 255 (2002), no. 1, 59-88.

\bibitem{gers0}
M. Gerstenhaber, The cohomology structure of an associative ring, {\em Ann. of Math.} 78 (1963) 267-288.

\bibitem{gers}
M. Gerstenhaber, On the deformation of rings and algebras, {\em Ann. of Math.} Vol. 79, No. 1 (1964) 59-103. 

\bibitem{getz-jon}
E. Getzler and J. D. S. Jones, Operads, homotopy algebra and iterated integrals for double loop spaces, preprint hep-th/9403055

\bibitem{hls}
J. T. Hartwig, D. Larsson and S. D. Silvestrov, Deformations of Lie algebras using $\sigma$-derivations, {\em J. Algebra} 295 (2006), 314-361.

\bibitem{kad}
T. V. Kadeisvili, On the theory of homology of fiber spaces, International Topology Conference (Moscow State Univ., Moscow, 1979), 
{\em Uspekhi Mat. Nauk} 35 (1980), no. 3(213), 183-188. 

\bibitem{kell}
B. Keller, Introduction to $A$-infinity algebras and modules, {\em Homology Homotopy Appl.} 3 (2001), no. 1, 1-35.

\bibitem{kont-soib}
M. Kontsevich and Y. Soibelman, Deformation of algebras over operads and the Deligne conjecture, {\it Conf\'{e}rence Mosh\'{e} Flato 1999}, Vol. I (Dijon), 255-307.

\bibitem{lada-markl}
T. Lada and M. Markl, Strongly homotopy Lie algebras, {\it Comm. Algebra} 23 (1995), no. 6, 2147-2161.

\bibitem{lada-stasheff}
T. Lada and J. Stasheff, Introduction to SH Lie algebras for Physicists, {\it Internat. J. Theoret. Phys.} 32 (1993), no. 7, 1087-1103.

\bibitem{makh-sil}
A. Makhlouf and S. Silvestrov, Hom-algebra structures, {\em J. Gen. Lie Theory Appl.} 2 (2008), no. 2, 51-64. 

\bibitem{makh-sil3}
A. Makhlouf and S. Silvestrov, Hom-algebras and Hom-coalgebras, {\em J. Algebra Appl.} 9 (2010), no. 4, 553-589.

\bibitem{makh-sil2}
A. Makhlouf and S. Silvestrov, Notes on 1-parameter formal deformations of Hom-associative and Hom-Lie algebras, {\em Forum Math.} 22 (2010), no. 4, 715-739.

\bibitem{markl}
M. Markl,
Transferring $A_\infty$ (strongly homotopy associative) structures, {\em Rend. Circ. Mat. Palermo (2) Suppl.} No. 79 (2006), 139-151.

\bibitem{sheng-chen}
Y. Sheng and D. Chen, Hom-Lie $2$-algebras, {\it J. Algebra} 376 (2013), 174-195.

\bibitem{trad}
T. Tradler, The Batalin-Vilkovisky algebra on Hochschild cohomology induced by infinity inner products,
{\em Ann. Inst. Fourier (Grenoble)} 58 (2008), no. 7, 2351-2379.

\bibitem{sta}
J. D. Stasheff, Homotopy associativity of $H$-spaces II, {\em Trans. Amer. Math. Soc.} 108 (1963) 293-312.

\bibitem{yau}
D. Yau, Hom-algebras and homology, {\em J. Lie Theory} 19 (2009), no. 2, 409-421.

\bibitem{yau2}
D. Yau, The Hom-Yang-Baxter equation, Hom-Lie algebras, and quasi-triangular bialgebras, {J. Phys. A} 42 (2009), no. 16, 165202, 12 pp.

\bibitem{yau3}
D. Yau, Hom-algebras via PROPs, {arXiv:1103.5261}
\end{thebibliography}
\end{document}